%% file: main_v2.tex
\documentclass{amsart}
\usepackage{graphicx} 
\usepackage[all]{xy}
\usepackage{color}
\usepackage{enumerate}
\usepackage{float}
\usepackage{amssymb}
\usepackage{amsmath}
\usepackage{tikz}
\usepackage{tikz-cd}
\usetikzlibrary{decorations.markings}
\usepackage{comment}
\usepackage{enumitem}
\usepackage{mathabx}
\usepackage{url}
\usepackage{hyperref}

\usepackage[top=20mm,left=20mm,right=20mm]{geometry}
\newcommand{\ZZ}{\mathbb{Z}}
\newcommand{\QQ}{\mathbb{Q}}
\newcommand{\ff}{\mathcal{F}}

\newcommand{\eqMark}{$E_{0}$}
\newcommand{\eqMarktwo}{$E_{2}$}
\newcommand{\eqMarkk}{$E_{k}$}
\newcommand{\eqMarkmirror}{$E_{q+q^{-1}}$}
\newcommand{\trMark}{$\mathbb{T}_{0}$}

\newcommand{\trMarkmirror}{$\mathbb{T}_{q+q^{-1}}$}

\theoremstyle{plain}
\newtheorem{theorem}{Theorem}[section]
\newtheorem{proposition}[theorem]{Proposition}
\newtheorem{corollary}[theorem]{Corollary}
\newtheorem{lemma}[theorem]{Lemma}
\newtheorem{conjecture}[theorem]{Conjecture}
\newtheorem{remark}[theorem]{Remark}
\newtheorem{notation}[theorem]{Notation}

\newtheorem{definition}[theorem]{Definition}
\newtheorem{example}[theorem]{Example}

\title{A Mirror deformation of Markov Numbers}
\usepackage{xcolor}
\definecolor{pea}{RGB}{68, 168, 50}
\definecolor{plum}{RGB}{125, 50, 168}
\definecolor{cyan}{RGB}{71, 191, 255}
\definecolor{bordeaux}{RGB}{150, 5, 51}
\definecolor{coral}{RGB}{245, 93, 159}
\definecolor{teal}{RGB}{28, 157, 186}
\definecolor{violet}{RGB}{138,43,226}

\newcommand{\defin}[1]{{\color{coral}\emph{#1}}}

\author[L. Bittmann]{Léa Bittmann}
\address{Institut de Recherche Mathématique Avancée, UMR 7501 Université de Strasbourg et CNRS, 7 rue René-Descartes, 67000 Strasbourg, France}
\email{lea.bittmann@math.unistra.fr }

\author[P. Jouteur]{Perrine Jouteur}
\address{Université de Reims Champagne-Ardenne, Laboratoire de Mathématiques, CNRS, UMR 9008, Reims, France}
\email{perrine.jouteur@univ-reims.fr}

\author[E. Kantarcı Oğuz]{Ezgi Kantarcı Oğuz}
\address{Galatasaray Üniversitesi, Ortaköy, Çırağan Cd. No:36, 34349 Beşiktaş, İstanbul, Türkiye}
\email{ezgikantarcioguz@gmail.com}

\author[M. Molander]{Melody Molander}
\address{Department of Mathematics, The Ohio State University, 100 Math Tower, 231 W 18th Ave, Columbus, OH 43210, USA}
\email{molander.3@osu.edu}

\author[E. Yıldırım]{Emine Yıldırım}
\address{International Center for Mathematical Sciences - Sofia, Bulgarian Academy of Sciences, Acad. G. Bonchev Str., Bl. 8, Sofia
1113, Bulgaria}
\email{e.yildirim@math.bas.bg}

\subjclass[2020]{Primary 05A30 Secondary  05E99, 11D25, 13F60, 11A55,  11B39, 17B37. }

\begin{document}

\begin{abstract} We introduce a \emph{deformed squared Markov equation} given by $X^2 + Y^2 + Z^2 + (q+q^{-1})(XY+YZ+XZ) = 3(1 + q + q^{-1})XYZ$. Symmetric solutions of this new equation present a remarkable factorization property which allows us to talk about their square roots. These square roots give a natural $q$-deformation of the Markov numbers that has not previously occurred in the literature. We call them \textit{mirror Markov numbers}. We prove a characterization of mirror Markov numbers and discover a mutation rule, \textit{mirror mutation}, to generate them all. We also prove a geometric realization of the corresponding mirror mutation on a once-punctured sphere with three orbifold points. Our mirror deformation leads to deformations of Fibonacci and Pell branches for which we give precise formulas. Furthermore, the deformed squared Markov equation specializes to many other very well known generalized Markov equations. We also obtain the super Markov numbers from a specialization of the deformed squared Markov numbers, which we use to prove a conjecture of Musiker.
\end{abstract}

\maketitle

\section{Introduction}

The Markov Diophantine equation, defined by Markov in the late 19th century~\cite{M79} is given by 
\begin{equation}\label{eq:originalmarkov}
x^2+y^2+z^2=3xyz.\tag{\eqMark}
\end{equation}

The triples of positive integer solutions to this equation are called \defin{Markov triples}, and each number that occurs in such a triple is called a \defin{Markov number}. Given a Markov triple, one can obtain a new triple via replacing an entry $x$ on a triple with $x'=(y^2+z^2)/x$, an operation called a \defin{Vieta jump}. The new triple $(x',y,z)$ is also a Markov triple. Furthermore, this operation can be applied to any coordinate as the equation is symmetric on $x$, $y$ and $z$. 

All solutions of the Markov equation can be obtained from the initial solution $(1,1,1)$ via Vieta jumping. We will, as is often done in the literature, visualise the Markov triples as vertices on a graph where the edges represent the Vieta jumps. This is called the \defin{Markov tree} \trMark. It is non-trivial (though not particularly complicated) to show that the solutions form a connected structure where starting with the solution $(1,2,5)$ we get an infinite binary tree. We will prove a similar construction for a deformed version of the equation in this work from which the reader may infer the proof for the classical case. See Aigner's book \cite{Aigner2013MarkovBook} for more on the classical Markov tree.

One property we would like to underline about the Markov tree before we move on to the deformed case concerns the maximum numbers that appears in each triple. One can see in Figure~\ref{fig:classical markov tree} that maxima increase as we move further from the root of the tree. This property, which occurs in the rest of the tree as well, is significant as the maxima of the Markov triples have an important role in the literature. They are in bijection with discrete parts of Markov and Lagrange spectrums, concerned with minima of binary quadratic forms and rational approximations respectively. Frobenius conjectured in 1913 that each of these maxima is unique, i.e. they do not occur as a maximum in another Markov triple. This conjecture is still open after a century (see Aigner's book \cite{Aigner2013MarkovBook}). 

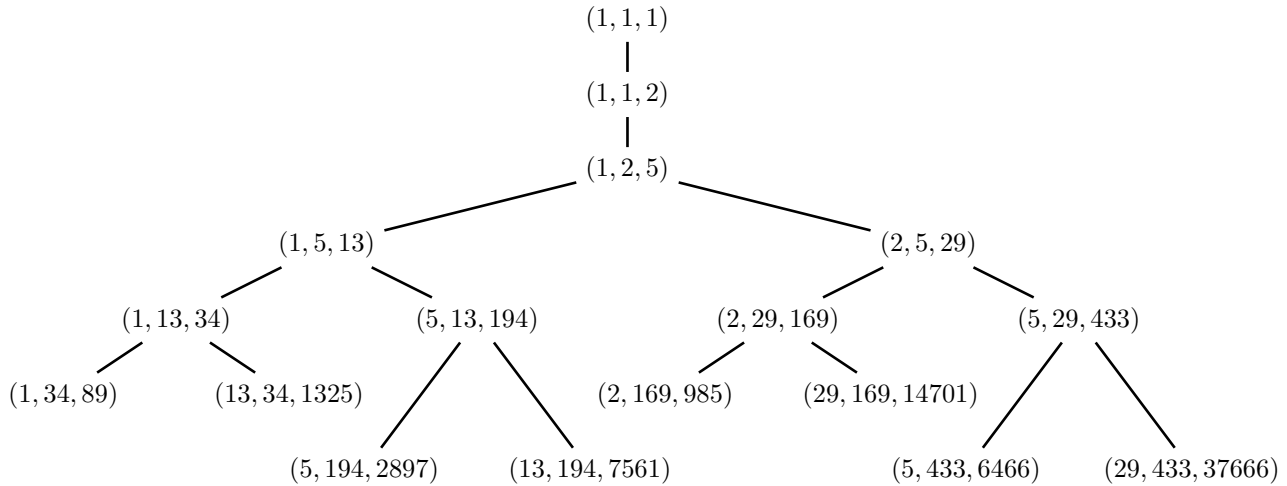
\begin{figure}[H]
    \centering
    \begin{tikzpicture}
            \node (A0) at(0,2) {$(1,1,1)$};
            \node (A1) at(0,1) {$(1,1,2)$};

            \node (A) at(0,0) {$(1,2,5)$};
            
            \node (B1) at(-4,-1) {$(1,5,13)$};
            \node (B2) at(4,-1) {$(2,5,29)$};
            
            \node (C1) at(-6,-2) {$(1,13,34)$};
            \node (C2) at(-2,-2) {$(5,13,194)$};
            \node (C3) at(2,-2) {$(2,29,169)$};
            \node (C4) at(6,-2) {$(5,29,433)$};
            
            \node (D1) at(-7.5,-3) {$(1,34,89)$};
            \node (D2) at(-4.5,-3) {$(13,34,1325)$};
            \node (D3) at(-3.5,-4) {$(5,194,2897)$};
            \node (D4) at(-0.5,-4) {$(13,194,7561)$};
            \node (D5) at(0.5,-3) {$(2,169,985)$};
            \node (D6) at(3.5,-3) {$(29,169,14701)$};
            \node (D7) at(4.5,-4) {$(5,433,6466)$};
            \node (D8) at(7.5,-4) {$(29,433,37666)$};

            \draw[line width=1pt] (A0)--(A1);
            \draw[line width=1pt] (A1)--(A);

            \draw[line width=1pt] (A)-- (B1);
            \draw[line width=1pt] (A)-- (B2);
            
            \draw[line width=1pt] (B1)-- (C1);
            \draw[line width=1pt] (B1)-- (C2);
            \draw[line width=1pt] (B2)-- (C3);
            \draw[line width=1pt] (B2)-- (C4);
            
            \draw[line width=1pt] (C1)-- (D1);
            \draw[line width=1pt] (C1)-- (D2);
            \draw[line width=1pt] (C2)-- (D3);
            \draw[line width=1pt] (C2)-- (D4);
            \draw[line width=1pt] (C3)-- (D5);
            \draw[line width=1pt] (C3)-- (D6);
            \draw[line width=1pt] (C4)-- (D7);
            \draw[line width=1pt] (C4)-- (D8);
            
\end{tikzpicture}
\caption{First levels of Markov tree \trMark}
\label{fig:classical markov tree}
\end{figure}

Recent work has showcased previously unknown connections to cluster algebras where the Vieta jumps correspond to cluster mutations. This sparked new interest in both the equation and its generalizations (see for instance \cite{G22,gyoda_uniqueness_2024,kantarci_oguz_oriented_2025,evans_arithmetic_2025,Banaian_Gyoda_2025}).
 The classical Markov equation can be translated into the language of cluster algebras by considering the quiver $Q$ depicted below as Figure~\ref{fig:quiver}.

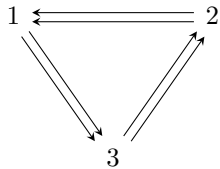
\begin{figure}[h]
\centering
\begin{tikzcd}[arrow style=tikz,>=stealth,row sep=4em]
1 
\arrow[dr,shift left=.4ex]
\arrow[dr,shift right=.4ex,""]
&& 2
\arrow[ll,shift left=.4ex,""]
\arrow[ll,shift right=.4ex,""]
\\
& 3 \arrow[ur,shift left=.4ex]
\arrow[ur,shift right=.4ex,""]
\end{tikzcd}
\caption{Markov quiver}
\label{fig:quiver}
\end{figure}

We assign the indeterminates $X_1,X_2$ and $X_3$ to the vertices of $Q$ as the initial variables of our cluster algebra and generate new variables via mutations. On the quiver level, the action of the mutation is just a reversal of the arrows for this case, so that we always get the same quiver up to a relabelling of the vertices. 
On the level of the variables, we get the following mutation formula:

\[X'_k = \frac{\displaystyle\prod_{i\to k}X_i + \displaystyle\prod_{k\to j}X_j}{X_k} =\frac{X_i^2 +X_j^2}{X_k}\]

\noindent where products are over incoming/outgoing arrows at vertex $k$.

Note that mutating a cluster $(x,y,z)$ at a coordinate, say $x$, results in a new cluster $\left(\frac{y^2+z^2}{z},y,z\right)$, corresponding exactly to a Vieta jump in the Markov equation. This gives us a correspondence between Markov triples and clusters for the cluster algebra of the quiver from Figure~\ref{fig:quiver}. In particular, every Markov number can be realized as a cluster variable evaluated at $X_1=X_2=X_3=1$. This connection can be exploited by carrying geometric or other structure from clusters to the Markov level to obtain more generalized versions of the Markov equation. Another possibility is to consider Diophantine equations coming from other cluster algebras or their generalizations.

 In our work we will do a marriage of the two methods, and look at a $q$-deformation that incorporates geometrical information into a generalized version of the Markov equation that exhibits algebraically and combinatorially interesting properties, while carrying a strong connection to the original Markov equation. 

Let us consider a new equation, the \defin{deformed squared Markov equation}, as below,

\begin{equation}
 X^2 + Y^2 + Z^2 + (q+q^{-1})(XY+YZ+XZ) = 3(1 + q + q^{-1})XYZ \tag{\eqMarkmirror}
\end{equation}
which was first suggested to us by Frédéric Chapoton.

A solution to (\eqMarkmirror) is given by a triple of Laurent polynomials $(X(q),Y(q),Z(q))$, where we require our polynomials to be \defin{symmetric} on $q$ and $q^{-1}$. We will show that all such solutions can be obtained from the initial solution $(1,1,1)$ via mutations, giving us a corresponding tree structure \trMarkmirror, see Figure~\ref{fig:deformed squared markov tree}.

The deformed squared Markov equation (\eqMarkmirror) has interesting connections to other generalizations of the Markov equation as well.

\begin{enumerate}
\item[(i)] Specializing $q+q^{-1}=\varepsilon$ in the equation \eqMarkmirror, we get the super Markov equation appearing in~\cite{M25,Huang2023}. When we let $\varepsilon$ be the product of odd variables in the context of super Teichmüller theory, we have $\varepsilon^2=0$;
\item[(ii)] Specializing $q+q^{-1}$ to integers, we obtain Gyoda-Matsushita~\cite{GM23} Markov equations;
\item[(iii)] Specializing $q$ to complex numbers, we see a connection to complex Markov numbers in the sense of ~\cite{GKW24}.
\item[(iv)] Setting $q+q^{-1}=\lambda_p=2\cos{(\pi/p)}$, we get the generalized Markov equation coming from cluster algebras of orbifold surfaces.
\end{enumerate}

Additionally, we show that solutions of the deformed squared Markov equation (\eqMarkmirror) are of the form \[(X(q),Y(q), Z(q))=(x(q)x(q^{-1}),y(q)y(q^{-1}), z(q)z(q^{-1}))\] where $x(q)$, $y(q)$ and $z(q)$ are polynomials in $q$ and $(x(1),y(1),z(1))$ is a Markov triple.

The product $X(q)=x(q)x(q^{-1})$ can be rewritten as $q^{-\operatorname{deg}(x)}x(q)\tilde{x}(q)$ where $\tilde{x}$ is the \defin{mirror image} of $x$, i.e., $x$ with its coefficients reversed. 

The mutation operation can also be defined directly on the polynomial triples $(x(q),y(q),z(q))$. This gives us a new $q$-deformation for Markov numbers that we call the \defin{mirror deformation}. Our deformation is distinct from the deformations previously explored in \cite{Leclere2021Q-DeformationsNumbers,Kogiso2020,kantarci_oguz_oriented_2025, EJMGO25}. In particular, neither the constant nor the highest degree terms are necessarily $1$, meaning they are not keeping track of a lattice structure attached to the mutations. They are a different statistic altogether. 

Regarding connection with cluster algebras from orbifold surfaces, we associate a geometric model of a sphere with one puncture and three orbifold points, appearing in \cite{FSM12,CS14,G22, BS24} to the mirror Markov combinatorics. This leads us to develop and prove an interpretation of our mutation rule on mirror deformations.

In Section~\ref{sec:solutionTree}, we prove that all solutions to the deformed squared Markov equation~(\eqMarkmirror)  are in the deformed squared Markov tree. We connect Equation~(\eqMarkmirror) to other generalized Markov equations in Section~\ref{sec:spec}. Using these connections, we state a positivity result and prove a conjecture of \cite{M25}. After giving a mutation rule for mirror deformed Markov triples, we delve into the solution tree for the mirror deformed Markov triples in Section~\ref{sec:mirrorMarkov}, with its geometric interpretation in Section~\ref{subsect:geometry}. Following the proofs from the previous section, we particularly investigate the Fibonacci and Pell branches of the mirror deformed Markov tree in Section~\ref{sec:FP}. We finish our paper by listing some open problems and elaborating on further directions in Section~\ref{sec:open}.

\subsection*{Acknowledgement} E.Y. is supported by Bulgarian National Science Fund KL-06-N92/5, Ministry of Education and Science of the Republic of Bulgaria, grant DO1-239/10.12.2024 and Simons Foundation, grant SFI-MPS-T-Institutes-00007697. E. K. O. thankfully acknowledges support by The Scientific and Technological Research Council of Türkiye  ARDEB 1001 grant 123F121.

The authors would like to thank Frédéric Chapoton for sharing the starting question of the paper, and  Banff International Research Station and the organisers of the 2025 workshop on Women in Noncommutative Algebra and Representation Theory 4 for such a wonderful event.

\section{Deformed squared Markov equation}~\label{sec:solutionTree}
Let $q$ be an indeterminate, and consider the equation 

\begin{equation}
\label{eq:mirrorMarkov}
 X^2 + Y^2 + Z^2 + (q+q^{-1})(XY+YZ+XZ) = 3(1 + q + q^{-1})XYZ. \tag{\eqMarkmirror}
\end{equation}

A \emph{solution} for this equation is, by definition, a triple of Laurent polynomials $(A(q), B(q), C(q))$ with integer coefficients satisfying \eqref{eq:mirrorMarkov}. Given a solution $(A, B, C)$ with non-zero entries, we can apply mutations to get other solutions. The mutation at $A$ replaces $A$ with

\begin{equation}
\label{eq:squared mutation rule}
\displaystyle A'=\frac{B^2+(q+q^{-1})BC + C^2}{A}=3 (1+q+q^{-1})BC-(q+q^{-1})(B+C) -A.
\end{equation}

The case $q=1$ of the deformed squared Markov equation gives a generalized Markov equation previously studied in~\cite{BS24,G22,GM23}.
\begin{equation}\label{eq:2Markov}\tag{\eqMarktwo}
x^2 + y^2 + z^2 + 2(xy+yz+xz) = 9xyz.
\end{equation}
This equation can be rewritten as 
\begin{equation*}
(x + y + z)^2 = 9xyz,
\end{equation*}
where plugging in $a^2,b^2$ and $c^2$ for $x$, $y$ and $z$ respectively gives us the original Markov equation squared:
\begin{equation*}
(a^2 +b^2 +c^2)^2 =(3abc)^2.
\end{equation*}

Now it is natural that the squares of Markov triples give solutions to the generalized equation. In fact, these are the only solutions, as shown in~\cite[Theorem 11 of Section 2]{GM23}.

\subsection{Deformed squared Markov tree}

We call the graph of all solutions of the deformed squared Markov equation~\eqref{eq:mirrorMarkov} which are obtained starting from $(1,1,1)$ by the mutation in Equation~\eqref{eq:squared mutation rule} \emph{the deformed squared Markov tree}. We justify that this is a tree because the graph reduces to the usual Markov tree when we set $q+q^{-1}=0$.

Note that as we mutate solutions to Equation~\eqref{eq:mirrorMarkov}, the exponents of $q$ will grow. We will have a higher degree polynomial in each triple; let us call it \emph{maximum} as in the classical case. Figure~\ref{fig:deformed squared markov tree} indicates only the maxima of triples in the first layers of the deformed squared Markov tree. 

\begin{remark}
    We note that the mutation in Equation~\eqref{eq:squared mutation rule} preserves symmetry in $q\leftrightarrow q^{-1}$ and it is clear from the rule that it also preserves integer coefficients. 
\end{remark}

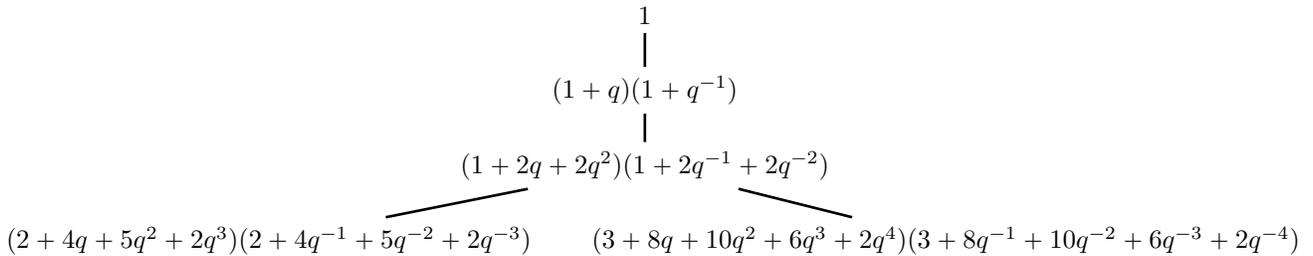
\begin{figure}[H]
    \centering
    \begin{tikzpicture}
            \node (A0) at(0,2) {$1$};
            \node (A1) at(0,1) {$(1+q)(1+q^{-1})$};

            \node (A) at(0,0) {$(1+2q+2q^2)(1+2q^{-1}+2q^{-2})$};
            
            \node (B1) at(-5,-1) {$(2+4q+5q^2+2q^3)(2+4q^{-1}+5q^{-2}+2q^{-3})$};
            \node (B2) at(4,-1) {$(3 + 8q + 10q^{2} + 6q^{3} + 2q^{4})(3 + 8q^{-1} + 10q^{-2} + 6q^{-3} + 2q^{-4})$};
           
            \draw[line width=1pt] (A0)--(A1);
            \draw[line width=1pt] (A1)--(A);

            \draw[line width=1pt] (A)-- (B1);
            \draw[line width=1pt] (A)-- (B2);
\end{tikzpicture}
\caption{Initial steps of the deformed squared Markov tree}
\label{fig:deformed squared markov tree}
\end{figure}

\begin{theorem}\label{thm:positivity}
    For any entry $(A(q),B(q),C(q))$ on the deformed squared Markov tree, $A(q)$, $B(q)$ and $C(q)$ are Laurent polynomials with positive integer coefficients that are symmetric in $q\leftrightarrow q^{-1}$.
\end{theorem}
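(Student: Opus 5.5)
The symmetry in $q\leftrightarrow q^{-1}$ and the integrality of coefficients are immediate from the Remark above. Laurent polynomiality is also immediate: the second expression in \eqref{eq:squared mutation rule}, $A'=3(1+q+q^{-1})BC-(q+q^{-1})(B+C)-A$, is a polynomial in $A,B,C$ and $q^{\pm1}$, so no division ever occurs. The real content is positivity. The subtractive formula hides it, so the plan is to prove coefficientwise inequalities by induction along the tree. Write $P\succcurlyeq Q$ when $P-Q$ has nonnegative coefficients, and set $\lambda=q+q^{-1}$.

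\textbf{Invariant.} For every triple other than the first few near the root, order it as $(A,B,C)$, where $C$ is the newest entry (the maximum) and $A,B$ are older. I would prove by induction:
\begin{enumerate}
\item[(a)] every entry $E$ satisfies $E\succcurlyeq 1$, i.e.\ it has nonnegative coefficients and constant term at least $1$;
\item[(b)] for the two entries $B,C$ adjacent to any mutation we perform, $BC\succcurlyeq \lambda$;
\item[(c)] $C\succcurlyeq (1+q+q^{-1})AB$, and in particular $C\succcurlyeq A$ and $C\succcurlyeq B$.
\end{enumerate}
Moving away from the root means mutating an older entry, say $A$. We then obtain a new triple $(B,C,A')$ whose newest entry is $A'$.

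\textbf{Inductive step.} Using (a), we have $(B-1)(C-1)\succcurlyeq 0$, hence $B+C\preccurlyeq BC+1$. Therefore
\[
\lambda(B+C)\preccurlyeq \lambda BC+\lambda \preccurlyeq \lambda BC + BC=(1+q+q^{-1})BC,
\]
where the second inequality uses (b). Also $A\preccurlyeq C\preccurlyeq (1+q+q^{-1})BC$ by (c) and (a). Subtracting these two bounds from $3(1+q+q^{-1})BC$ gives
\[
A'\succcurlyeq (1+q+q^{-1})BC .
\]
This is exactly (c) for the new triple. Since $(1+q+q^{-1})BC\succcurlyeq 1$ and has positive coefficients at $q^{\pm1}$, the new entry satisfies (a). It also satisfies (b) when paired with any entry that is $\succcurlyeq 1$: the product of $A'$ with such an entry dominates $(1+q+q^{-1})\cdot 1$, whose coefficients at $q^{\pm1}$ are $1$, so that product is $\succcurlyeq \lambda$. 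The only product that (b) could still need is $BC$ itself, and that is the same product as in the current triple. So the invariant propagates, and positivity follows from $A'\succcurlyeq(1+q+q^{-1})BC$ together with (a).

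\textbf{Base cases and the main obstacle.} I expect the main obstacle to be the top of the tree, where (b) fails: $1\cdot 1\not\succcurlyeq\lambda$. I would therefore verify the first levels by direct computation and start the induction at $(1,\,\lambda+2,\,(1+2q+2q^2)(1+2q^{-1}+2q^{-2}))$ and its two children shown in Figure~\ref{fig:deformed squared markov tree}. There I would check (a)--(c) by hand; for instance $(\lambda+2)\cdot 1\succcurlyeq\lambda$ holds. Along the branch where the entry $1$ persists, (b) must be rechecked for the product of $1$ with the newer entry. This is automatic, because every entry after $\lambda+2$ has coefficients at $q^{\pm1}$ that are at least $1$. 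The backward mutations toward the root are the finitely many triples already listed in the figure, so they need no separate argument. If the invariant (c) turns out to be too strong at some base case, the fallback is to weaken it to $C\succcurlyeq AB$ and redo the bookkeeping. That gives $A'\succcurlyeq (1+q+q^{-1})BC$ with slightly different constants, because one spare copy of $(1+q+q^{-1})BC$ remains.
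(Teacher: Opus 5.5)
Your proof is correct, and it takes a genuinely different route from the paper's.

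\textbf{The paper's route.} The paper never uses the mutation formula for positivity. It imports from \cite{GMS} (Corollary 7.11) the description of each $M_t(q)$ as the numerator of a recursively built continued fraction $F^+_t(q)$. The entries of $F^+_t(q)$ are $2\lambda+2$, $3\lambda+2$, $\lambda+2$, $1$ and $b_n-1$. A small lemma (the first and last entries have constant term at least $2$) ensures that $b_n-1$ still has nonnegative coefficients. The conclusion follows because the numerator is a continuant, i.e.\ a polynomial with nonnegative coefficients in the entries.

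\textbf{Your route.} Your argument is self-contained and elementary. The estimates $\lambda(B+C)\preccurlyeq(1+\lambda)BC$ and $A\preccurlyeq(1+\lambda)BC$, applied to the subtractive Vieta form, give the coefficientwise growth bound $A'\succcurlyeq(1+q+q^{-1})BC$. This is more quantitative than positivity alone.

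\textbf{What each buys.} The paper's route gives an explicit combinatorial expansion of every $M_t(q)$. Since all the continued-fraction entries lie in $\mathbb{Z}_{\ge0}[\lambda]$, it also gives positivity as a polynomial in $\lambda=q+q^{-1}$. That is what the specialization $\lambda=\varepsilon$ in the super Markov corollary actually requires, and $q$-positivity alone does not give it (e.g.\ $\lambda^2-2=q^2+q^{-2}$). However, your induction only ever uses that $\lambda$ has nonnegative coefficients. So it runs verbatim in $\mathbb{Z}[\lambda]$ and yields this stronger statement as well.

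\textbf{Your anticipated obstacle.} It is smaller than you fear, and no fallback is needed. Condition (b) follows from (a) and (c), since $BC\succcurlyeq(1+\lambda)AB^2\succcurlyeq\lambda$. Conditions (a) and (c) already hold at $(1,1,2+\lambda)$, because $2+\lambda\succcurlyeq 1+\lambda$. So only the first mutation $(1,1,1)\to(1,1,2+\lambda)$ needs separate treatment.
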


The proof of Theorem~\ref{thm:positivity} is postponed to Section~\ref{sec:spec}.

\begin{proposition} \label{prop:allsolutionsontree}
    The symmetric solutions of the deformed squared Markov equation~\eqref{eq:mirrorMarkov} with positive integer coefficients are exactly the vertices of the deformed squared Markov tree.
\end{proposition}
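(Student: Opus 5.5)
One inclusion is Theorem~\ref{thm:positivity}: every vertex of the deformed squared Markov tree is a symmetric solution with positive integer coefficients. So it remains to show that every such solution $(A,B,C)$ lies on the tree. The plan is a descent argument in the spirit of the classical Vieta-jumping proof. The descent is measured by the total degree $\deg A+\deg B+\deg C$, where $\deg$ of a symmetric Laurent polynomial means its highest exponent of $q$.

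\textbf{Step 1 (specialization at $q=1$).} Setting $q=1$ turns $(A(1),B(1),C(1))$ into a positive integer solution of \eqref{eq:2Markov}. By \cite[Theorem 11]{GM23}, it is then $(a^2,b^2,c^2)$ for a Markov triple $(a,b,c)$. In particular $A(1),B(1),C(1)\ge 1$, so no entry is zero and all mutations are defined.

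\textbf{Step 2 (leading coefficients).} Assume $\alpha=\deg A\ge \beta=\deg B\ge \gamma=\deg C$, and let $a_\alpha,b_\beta,c_\gamma>0$ be the top coefficients. On the right-hand side of \eqref{eq:mirrorMarkov}, the top term is $3a_\alpha b_\beta c_\gamma\, q^{\alpha+\beta+\gamma+1}$. Positivity of coefficients rules out cancellation on the left. The left side therefore has degree $\max(2\alpha,\ \alpha+\beta+1)$ and a positive top coefficient, which must match the right side. Comparing degrees gives $\alpha+\beta+\gamma+1\le \max(2\alpha,\alpha+\beta+1)$. There are two cases.
\begin{itemize}
\item If $\gamma=0$ and $2\alpha<\alpha+\beta+1$, the degrees match automatically.
\item Otherwise $2\alpha=\alpha+\beta+\gamma+1$, that is, $\alpha=\beta+\gamma+1$.
\end{itemize}

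\textbf{Step 3 (the mutation lowers degree).} In the generic case $\alpha\ge \beta+\gamma+1$, consider
\[A'=\frac{B^2+(q+q^{-1})BC+C^2}{A}.\]
It is a symmetric Laurent polynomial by \eqref{eq:squared mutation rule}. Its numerator has positive coefficients and degree $\le 2\beta+1$. Since $A$ also has positive coefficients, the degree of the quotient is exactly $\deg(\text{num})-\alpha\le 2\beta+1-(\beta+\gamma+1)=\beta-\gamma\le\beta<\alpha$.

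We also need $A'$ to have positive coefficients in order to iterate. I expect this to be the main obstacle: $A'$ is a genuine solution with $A'(1)>0$, but positivity of its individual coefficients is not automatic. My first attempt would be to reorganize the induction as follows. Mutating repeatedly at the maximal-degree entry strictly decreases total degree as long as some entry stays nonzero. The process ends at a triple where the degree inequality of Step 2 cannot force further descent, which means all degrees are $\le 1$. Once such a small triple is identified with a tree vertex, positivity along the way comes for free from Theorem~\ref{thm:positivity}, because mutation is an involution.

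The descent itself only needs nonzero entries and the degree bound. To justify the degree bound without assuming positivity of intermediate triples, I would track the leading coefficient at $q=1$ via Step 1 together with the Markov descent. Indeed, the specializations descend exactly along the Markov tree, since $(A'(1))=a'^2$ with $a'=(b^2+c^2)/a$. This keeps all entries nonzero, and keeps the degree of $A'$ controlled by the positive numerator.

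\textbf{Step 4 (base cases).} Finally, classify the solutions in the small-degree cases, that is, the degenerate case $\gamma=0$ with $\beta\ge\alpha$, together with the possibilities where $\alpha\le 1$. Do this by direct comparison of coefficients in \eqref{eq:mirrorMarkov}, using the constraint from Step 1. One should find only $(1,1,1)$ and $(1,1,(1+q)(1+q^{-1}))$, up to permutation. Both are vertices of the tree, so reversing the chain of mutations places $(A,B,C)$ on the tree.
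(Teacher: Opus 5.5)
Your descent is the paper's: mutate at the entry of strictly largest degree, using that the right-hand side of \eqref{eq:mirrorMarkov} has exact top term $3a_\alpha b_\beta c_\gamma q^{\alpha+\beta+\gamma+1}$ while the left-hand side has degree at most $\max(2\alpha,\alpha+\beta+1)$. The gap is in the termination step. Your inequality $\alpha+\beta+\gamma+1\le\max(2\alpha,\alpha+\beta+1)$ rules out every tie at the top degree except $(\alpha,\beta,\gamma)=(t,t,0)$, with $t$ arbitrary. In that case your descent genuinely stalls, since mutating at $A$ only gives $\deg A'\le t=\deg A$. So ``all degrees are $\le 1$'' does not follow, and Step~4 announces the outcome of a classification you never carry out. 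In this case the degrees do ``match automatically'', but the leading coefficients do not, and that comparison is the missing step. If $t\ge 1$, the coefficient of $q^{2t+1}$ is $a_tb_t$ on the left, because the terms $(q+q^{-1})C(A+B)$ only reach degree $t+1<2t+1$. On the right it is $3a_tb_tc_0$, which forces $3c_0=1$, impossible in $\ZZ$. If $t=0$, the entries are positive constants (their values at $q=1$), and the $q^1$-coefficient gives $ab+bc+ca=3abc$, hence $(1,1,1)$. This is the content of Claims~1 and~2 in the paper's proof, and it is what guarantees the descent ends at $(1,1,1)$. No separate treatment of ``$\alpha\le1$'' or of $(1,1,(1+q)(1+q^{-1}))$ is needed: that triple has a unique top-degree entry and mutates to $(1,1,1)$.

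The ``main obstacle'' you flag is not one, and your workaround is muddled. Nothing in the descent uses positive coefficients:
\begin{itemize}
\item the top coefficient $3a_\alpha b_\beta c_\gamma$ of the right-hand side is nonzero for any nonzero entries;
\item the bound $\deg\bigl(B^2+(q+q^{-1})BC+C^2\bigr)\le\max(2\beta,\beta+\gamma+1)$ holds regardless of signs;
\item for an exact quotient in $\ZZ[q^{\pm1}]$ one always has $\deg A'=\deg(\text{numerator})-\deg A$, since $\ZZ$ is a domain.
\end{itemize}
So you should not claim the numerator is positive; you do not need it. Replace ``tracking the leading coefficient at $q=1$'' with what you actually use: the values at $q=1$ stay positive because $A'(1)=(B(1)+C(1))^2/A(1)$, so no entry ever vanishes. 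In the positive-coefficient setting this is a valid substitute for Lemma~\ref{lem:symmetriczerosolutions}. The paper uses that lemma instead so that its descent covers all symmetric integer solutions.
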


Our proof for the proposition will be via induction on the maximum degree of the triple $(A(q),B(q),C(q))$. We will first start with a lemma to show that except for $(0,0,0)$, there are no solutions where one or more entries are equal to zero, so we do not need to worry about the case of a maximum degree being $-\infty$.

\begin{lemma}\label{lem:symmetriczerosolutions} Let $(A(q),B(q),C(q))$ be a solution to  Equation~\eqref{eq:mirrorMarkov} and suppose that $A, B$ and $C$ are symmetric in $q\leftrightarrow q^{-1}$. If $C(q)=0$, then $A(q)=B(q)=0$.
\end{lemma}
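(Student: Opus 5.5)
Setting $C(q)=0$ in \eqref{eq:mirrorMarkov} kills every term containing $C$, including the right-hand side $3(1+q+q^{-1})ABC$, so we are left with
\[
A^2+(q+q^{-1})AB+B^2=0 .
\]
The plan is to factor the left-hand side over the Laurent polynomial ring $\ZZ[q,q^{-1}]$ and then use the symmetry hypothesis to rule out nonzero solutions.

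The key observation is the factorization
\[
A^2+(q+q^{-1})AB+B^2=(A+qB)(A+q^{-1}B),
\]
which one checks by expanding. Since $\ZZ[q,q^{-1}]$ is an integral domain, one of the two factors must vanish. By the evident symmetry between the two cases we may assume $A=-qB$; the other case is identical after exchanging $q$ and $q^{-1}$.

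Now we use the symmetry of $A$ and $B$ under $q\leftrightarrow q^{-1}$. Substituting $q^{-1}$ for $q$ in the identity $A(q)=-qB(q)$ gives $A(q^{-1})=-q^{-1}B(q^{-1})$. By symmetry this reads $A(q)=-q^{-1}B(q)$. Comparing with $A(q)=-qB(q)$ yields
\[
(q-q^{-1})B(q)=0 .
\]
Because $q-q^{-1}\neq 0$ in the integral domain $\ZZ[q,q^{-1}]$, we get $B=0$, and therefore $A=-qB=0$.

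There is no real obstacle here; the only step requiring an idea is spotting the factorization $(A+qB)(A+q^{-1}B)$. Without it, one could argue by comparing the highest-degree terms of $A$ and $B$ (symmetric Laurent polynomials have equal top and bottom degree in absolute value), but the factorization makes that degree bookkeeping unnecessary.
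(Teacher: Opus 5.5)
Your proof is correct, and it takes a cleaner route than the paper's. The paper does not factor. It assumes $\deg A \ge \deg B$, writes out the coefficients, and compares the coefficients of $q^{2t+1}$ and $q^{2t}$. It then proves by induction that $b_{t-i} = -a_{t-i+1}$ for all $i \ge 1$, which amounts to establishing $B = -q^{-1}A$ coefficient by coefficient. The contradiction comes at the bottom of the support: symmetry gives $a_{-t} = a_t \neq 0$, which would force $b_{-t-1} \neq 0$, impossible since $B$ has no terms below $q^{-t}$.

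Your factorization $A^2+(q+q^{-1})AB+B^2=(A+qB)(A+q^{-1}B)$ in the domain $\ZZ[q,q^{-1}]$ replaces that whole induction in one line. Your use of the involution $q \mapsto q^{-1}$ replaces the support argument. Your version also directly explains the remark following the lemma: without symmetry one gets $B/A = -q^{\pm 1}$, e.g.\ the solution $(1,-q,0)$. The paper obtains this separately via the quadratic formula, and your factorization is exactly that computation done inside the Laurent ring.

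The paper's approach needs no insight beyond leading-coefficient comparison, the same bookkeeping it reuses in Claims 1 and 2 of the proof of Proposition~\ref{prop:allsolutionsontree}. Both arguments use only that the coefficient ring is an integral domain. Yours is shorter and more transparent, and loses nothing.
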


\begin{proof} In the case $C(q)=0$, $A(q)$ and $B(q)$ satisfy:
\begin{equation}\label{eq:Eczero}\tag{$E_{c=0}$}
A(q)^2 + B(q)^2 + (q+q^{-1})(A(q)B(q)) = 0.
\end{equation}
For a contradiction, we assume that $A(q)$ and $B(q)$ are not both equal to zero, and the degree of $A(q)$ is weakly higher than the degree of $B(q)$. 
Let $A(q)=a_tq^t+a_{t-1}q^{t-1}+\cdots a_{-(t-1)}q^{-(t-1)}+a_{-t}q^{-t}$ where $a_t\neq 0$ and $B(q)=b_tq^t+b_{t-1}q^{t-1}+\cdots b_{-(t-1)}q^{-(t-1)}+b_{-t}q^{-t}$ where the coefficients are possibly equal to $0$.

Consider the coefficients of $q^{2t+1}$ and $q^{2t}$ in \eqref{eq:Eczero}. $q^{2t+1}$ gives us: $a_tb_t=0\Rightarrow b_t=0$. With the added assumption that $b_t=0$, $q^{2t}$ gives us $a_t^2+a_t(b_{t-1})=0 \Rightarrow b_{t-1}=-a_t$.

\textbf{Claim:} For all $i\geq 1$ we have $b_{t-i}=-a_{t-i+1}$.
\textit{Proof of Claim.} We have already shown that the claim holds for $i=1$. Assume it holds for $i \leq k$ for some $k$. Now consider the coefficient of $q^{2t-k}$ on \eqref{eq:Eczero}. As no lesser terms can contribute, we can replace our $A$ and $B$ with 
\begin{eqnarray*}
    \hat{A}(q)&=&a_tq^t+a_{t-1}q^{t-1}+\cdots + a_{t-k}q^{t-k}.\\
     \hat{B}(q)&=&-a_tq^{t-1}-a_{t-1}q^{t-2} -\cdots -a_{t-k+1}q^{t-k}+ b_{t-k-1}q^{t-k-1}.
\end{eqnarray*}
The contribution of $\hat{B}^2$ to the coefficient of $q^{2t-k}$ is cancelled out by the contribution of $q^{-1}\hat{A}\hat{B}$. The contribution of $\hat{A}^2$ cancels out that of $q\hat{A}\hat{B}$ except for the terms: $a_ta_{t-k}q^{2t-k}$ from  $\hat{A}^2$ and 
: $-a_{t}b_{t-k-1}q^{2t-k}$ from $q\hat{A}\hat{B}$. So we get $a_ta_{t-k}=-a_{t}b_{t-k-1}\rightarrow a_{t-k}=-b_{t-k-1}$, proving our claim.

This can not happen since $A(q)$ and $B(q)$ are assumed to be symmetric, and we have $a_t=a_{-t}\neq 0= b_{-t-1}$.
\end{proof}

\begin{remark}
    Note that the criteria for solutions of Equation~\eqref{eq:mirrorMarkov} to be symmetric is necessary in Lemma \ref{lem:symmetriczerosolutions}. Suppose $(A(q),B(q),C(q))$ is a solution of \eqref{eq:mirrorMarkov} with integer coefficients, $A(q)\neq 0$, and $C(q)=0$. Then dividing \eqref{eq:Eczero} by $A(q)^2$ and solving the quadratic formula results in
    \begin{align*}
        \frac{B(q)}{A(q)}=-q^{-1}, \text{ or }-q.
    \end{align*}
    As an example, $(1,-q,0)$ is a nonsymmetric solution. 
There are also nonsymmetric solutions without any zeroes, such as $(1,-q,-2q^2-4q-q^{-1}-2)$. In these cases, it is possible to get negative coefficients. One can think of the symmetry condition as analogous to the positivity condition for the non $q$-deformed equations.
\end{remark}

\begin{proof}[Proof of Proposition~\ref{prop:allsolutionsontree}] 
Let $(A(q),B(q),C(q))$ be a triple of symmetric Laurent polynomials with integer coefficients solving \eqref{eq:mirrorMarkov} with (maximum) degrees $a,b,c$ respectively.

A triple where all degrees are equal is called \defin{degree-singular}.

\textbf{Claim 1:} Any triple that is not degree-singular has a unique entry of maximum degree.

\emph{Proof of Claim 1.} Assume, for a contradiction, that the degrees of $(A(q),B(q),C(q))$ satisfy $a=k$ and $b=c=t$ for some $k<t$: $A(q)=a_kq^k+\cdots$, $B(q)=b_tq^t+\cdots $, $
    C(q)=c_tq^t+\cdots$. Plugging them into \eqref{eq:mirrorMarkov} we get,

\begin{equation*}
A^2 + B^2 + C^2 + (q+q^{-1})(AB+BC+AC) = 3(1 + q + q^{-1})ABC.
\end{equation*}
The left hand side of the equation has maximum degree $2t+1$, with the coefficient of $q^{2t+1}$ equal to $b_tc_t$. The right hand side has maximum degree $2t+k+1$, and the coefficient of $q^{2t+k+1}$ is $3a_kb_tc_t$. As $b_t$ and $c_t$ are non-zero, this is not possible.

\textbf{Claim 2:} The only degree-singular triple is $(1,1,1)$.

\emph{Proof of Claim 2.} Taking $a=b=c=t$, we set:
\begin{align*}
    A(q)&=a_tq^t+a_{t-1}q^{t-1}+\cdots+a_{t-1}q^{-(t-1)}+a_tq^{-t},\\
    B(q)&=b_tq^t+b_{t-1}q^{t-1}+\cdots+b_{t-1}q^{-(t-1)}+b_tq^{-t},\\
    C(q)&=c_tq^t+c_{t-1}q^{t-1}+\cdots+c_{t-1}q^{-(t-1)}+c_tq^{-t}.\\
\end{align*}
The degree of the left hand side is \textit{at most }$2t+1$, as the coefficient of $q^{2t+1}$, $a_tb_t+b_tc_t+a_tc_t$, may be equal to zero. The degree of the right hand side is \textit{exactly} $3t+1$, as we know the coefficient of $q^{2t+1}$ is non-zero: $3a_tb_tc_t$.

We can conclude that, we need $t=0$ (meaning $A$,$B$,$C$ are constants) and $a_0b_0+b_0c_0+a_0c_0=3a_0b_0c_0$.

As $a_0$, $b_0$ and $c_0$ are non-zero integers, $|a_0b_0c_0|$ is larger than or equal to $|a_0b_0|$, $|b_0c_0|$ and $|a_0c_0|$. The equality can only be realized when all these values are positive and equal to $1$. So $(1,1,1)$ is indeed the only degree-singular triple.

We will finish our proof by arguing that given any solution, we can reach a degree-singular non zero triple in finitely many steps via mutation. As $(1,1,1)$ is the only such triple this mutation uniquely places the given solution on the deformed squared Markov tree. 

For a non-degree singular solution $(A(q),B(q),C(q))$, we have one coordinate with maximum degree, so we can assume without loss of generality that the degrees satisfy $a\leq b < c$. Let us mutate at the coordinate $C(q)$ to get the triple $(A(q),B(q),C'(q))$ where $C'(q)$ is given by (as in Equation \eqref{eq:squared mutation rule}):

\begin{equation*}
\displaystyle C'=\frac{A^2+(q+q^{-1})AB + B^2}{C}.
\end{equation*}

What can we say about the degree of $C'$? The degree of the numerator is bounded above by $\operatorname{max}\{2b,a+b+1\}$. As $c\geq b+1$, both $2b-c$ and $a+b+1-c$ are less than or equal to $b$. So the maximum degree of the new triple $(A(q),B(q),C'(q))$ is strictly smaller. If $(A(q),B(q),C'(q))$ is degree-singular, then we are done. If not, we can repeat mutating at the coordinate with maximum degree. As the progress can not go on indefinitely (more than $c$ steps to be exact), we will reach a degree-singular solution after finitely many mutations. 
\end{proof}

\section{Specializations}~\label{sec:spec}

In this section, we particularly highlight the fact that our deformed squared Markov equation is unifying the generalized Markov equations which have fruitfully dominated the recent interest in this topic. In Subsection~\ref{subsec:super}, we prove an interesting conjecture, Conjecture~\ref{conj:Musiker}, of Musiker.

\subsection{Specialization to integers}\label{sec:spec int}
When $q+q^{-1}$ is specialized to a nonnegative integer $k$, the deformed squared Markov equation \eqref{eq:mirrorMarkov} becomes the $k$-generalized Markov equation ($k$-GM equation) studied in \cite{GMS} by Gyoda, Maruyama and Sato.
\begin{equation}
    \label{eq:kgeneralizedMarkov}
    \tag{\eqMarkk}
    x^2+y^2+z^2 + k(yz+zx+xy) = (3+3k)xyz.
\end{equation}

In \cite{GMS}, the authors give a combinatorial interpretation of the $k$-generalized Markov numbers, solutions to \eqMarkk. We translate here this construction in our $q$-deformed setting and give a proof for Theorem~\ref{thm:positivity}.

Let us recall some notations. Let $a_1,...,a_n$ be integers. We define $[a_1,...,a_n]$ to be the continued fraction:
\begin{align*}
    [a_1,...,a_n]\coloneq a_1 + \cfrac{1}{a_2 + \cfrac{1}{\ddots + \cfrac{\ddots}{a_{n-1} + \cfrac{1}{a_n}}}}
\end{align*}

Let $r=\frac{a}{b},s=\frac{c}{d}$ be rational numbers in lowest terms, ordered such that $r<s$. If $|ad-bc|=1$, the \defin{Farey sum} of the two is defined to be 
\begin{align*}
    t = r\oplus s=\frac{a+c}{b+d}.
\end{align*}

Then $(r,s,t)$ is called a \defin{Farey triple}. If $(r,s,t)$ is a Farey triple, then so is $(r, t, r\oplus t)$ and $(t,s, s\oplus t)$. This allows to define recursively the \defin{Farey tree}, starting from $r=0$ and $s=1$.

Recall that Markov triples are parametrized by rational numbers, via the identification between the Markov tree and the Farey tree, so that for each $t\in \QQ\cap (0,1)$, there is a Markov number $\mathbf{m}_t$ associated to it, starting with $\mathbf{m}_{1/2} = 5$. Then each Farey triple $(r,s,t)$ with $t = r\oplus s$ corresponds to a Markov triple $(\mathbf{m}_r,\mathbf{m}_s,\mathbf{m}_t)$. The $k$-generalized Markov tree and the deformed squared Markov tree are also parametrized by $\QQ\cap(0,1)$ in the same fashion. Gyoda, Maruyama and Sato proved that given a parameter $t\in \QQ\cap(0,1)$, one can recover the $k$-generalized Markov triple associated to it as numerators of some continued fractions defined recursively along the tree. The same process works for deformed squared Markov triples.

\begin{definition}[Proposition 7.12 of \cite{GMS}]
Let $t\in \QQ\cap (0,1)$, and let $(r,s,t)$ be a Farey triple. Define recursively the continued fraction $F^+_t(q)$ by the following rules :
\begin{itemize}
    \item $F^+_{1/2}(q) := [2(q+q^{-1}) + 2,q+q^{-1}+2]$.
    \item If $r = 0$ and $s \neq 1$, and given $F^+_s(q) = [b_1,\cdots,b_n]$, put 
    $F^+_t(q) := [2(q+q^{-1})+2,1,b_n-1,b_{n-1},\cdots,b_1].$
    \item If $r\neq 0$ and $s=1$, and given $F^+_r(q) = [a_1,\cdots,a_{\ell}]$, put 
    $
    F^+_t(q) = [a_{\ell},\cdots,a_1,3(q+q^{-1})+2,q+q^{-1}+2].
    $
    \item If $r\neq 0$ and $s\neq 1$, and given $F^+_s(q) = [b_1,\cdots,b_n]$ and $F^+_r(q) = [a_1,\cdots,a_{\ell}]$, 
    $$
    F^+_t(q) = [a_{\ell},\cdots,a_1,3(q+q^{-1})+2,1,b_{n}-1,b_{n-1},\cdots,b_1].
    $$
\end{itemize}
\end{definition}

Let $\mathbf{m}_t$ be the Markov number associated to $t\in \QQ\cap (0,1)$. Let $M_t(q)$ be the deformed squared of $\mathbf{m}_t$. By Corollary 7.11 in \cite{GMS}, the numerator of $F^+_t(q)$ is precisely $M_t(q)$.

To prove the positivity of coefficients of (the numerator of) $F^+_t(q)$, we will use the following technical lemma.

\begin{lemma}
    For every $t\in \QQ\cap(0,1)$, both the first and the last entry $a_1$ and $a_n$ of the continued fraction $F^+_t(q) = [a_1,a_2\cdots,a_n]$ are Laurent polynomials with constant coefficient greater than $2$. 
\end{lemma}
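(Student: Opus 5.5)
Writing $\lambda = q+q^{-1}$, I would prove the statement by induction along the Farey tree, following the recursive definition of $F^+_t(q)$. The claim concerns only the first and last entries. Each recursion step produces a new sequence whose first and last entries are either a fixed new entry or the first or last entry of a previously constructed sequence. So it suffices to check the new entries and to track where old entries end up.

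\textbf{Base case.} For $t=1/2$ we have $F^+_{1/2}(q)=[2\lambda+2,\lambda+2]$. The first entry $2q+2+2q^{-1}$ has constant coefficient $2$, and the last entry $q+2+q^{-1}$ also has constant coefficient $2$. So the correct reading of the statement must be ``at least $2$'' (greater than or equal to $2$), and I would prove that version. It is what the positivity argument needs: it guarantees that $b_n-1$ still has positive constant term and nonnegative coefficients.

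\textbf{Inductive step.} I would strengthen the hypothesis slightly: every entry of every $F^+_t$ is a symmetric Laurent polynomial in $q$ with nonnegative coefficients, and the first and last entries have constant coefficient $\ge 2$. The three cases go as follows.

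\begin{enumerate}
\item[(a)] Case $r=0$, $s\neq 1$. The first entry is $2\lambda+2$, with constant term $2$. The last entry is $b_1$, the first entry of $F^+_s$, whose constant term is $\ge2$ by induction. The interior entries are $1$, $b_n-1$ and the $b_i$. Here $b_n-1$ has constant term $\ge1$ and nonnegative coefficients, since $b_n$ has constant term $\ge2$.
\item[(b)] Case $r\neq 0$, $s=1$. The first entry is $a_\ell$ and the last is $\lambda+2$. Both have constant term $\ge2$, using the induction hypothesis for $a_\ell$.
\item[(c)] Case $r\neq0$, $s\neq 1$. The first entry is $a_\ell$ and the last is $b_1$, both covered by the induction hypothesis. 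The new interior entries are $3\lambda+2$, $1$ and $b_n-1$.
\end{enumerate}

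The induction runs over the depth in the Farey tree. In a Farey triple $(r,s,t)$ with $t=r\oplus s$, the parents $r,s$ appear strictly earlier, and when they equal $0$ or $1$ the corresponding rule does not refer to $F^+_0$ or $F^+_1$. So the recursion is well-founded.

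The only genuine subtlety, and the step I expect to matter most, is the mismatch between ``greater than $2$'' and the base case value of exactly $2$. I would resolve it by proving the $\ge 2$ version and noting that this is exactly what guarantees $b_n-1$ keeps a positive constant term and nonnegative coefficients. Beyond that, the argument is bookkeeping of which entries land at the ends of the new sequence.
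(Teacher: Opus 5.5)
Your proof is correct and is essentially the paper's argument. It inducts on the Farey tree, observing that the first and last entries of $F^+_t(q)$ are either the fixed entries $2(q+q^{-1})+2$ and $q+q^{-1}+2$ or are inherited from the ends of $F^+_r(q)$ and $F^+_s(q)$. You are also right that ``greater than $2$'' must be read as ``at least $2$''. In fact the same induction shows that every end entry is one of those two fixed polynomials, so the constant coefficient is always exactly $2$.
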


\begin{proof}
    By induction on the Farey tree.\\
    \noindent As $F^+_{1/2}(q) = [2q+2q^{-1} + 2,q+q^{-1},q+q^{-1}+2]$, the lemma is true for $t = 1/2$.\\
    \noindent Let $(r,s,t)$ be a Farey triple with $t = r\oplus s$. If $r\neq0$ and $s\neq 1$, suppose the lemma is true for $r$ and $s$. Then the continued fraction $F^+_t(q)$ begins by the last entry of $F^+_r(q)$, and ends by the first entry of $F^+_s(q)$. If $r = 0$, $F_t^+(q)$ begins with $2q+2q^{-1} + 2$ and ends with the first entry of $F_s^+(q)$. Similarly if $s=1$, $F^+_t(q)$ begins with the last entry of $F_r^+(q)$ and ends with $q+q^{-1}+2$. In any case, beginnings and ends of $F^+_t(q)$ are Laurent polynomials whose constants coefficients are greater than $2$.
\end{proof}

Now we are ready to prove the following.

\begin{proof}[Proof of Theorem~\ref{thm:positivity}]
    Let $(A,B,C)$ be a triple in the deformed squared Markov tree, parametrized by some rational $t\in \QQ\cap(0,1)$. Via the method described above, one can associate to this triple a continued fraction $F^+_t(q) = [a_1,a_2,\cdots,a_n]$, where the $a_i's$ are Laurent polynomials with positive integer coefficients. The numerator of $F^+_t(q)$ is also a Laurent polynomial with positive integer coefficients. According to Corollary 7.11 in \cite{GMS}, this numerator is precisely the squared Markov number $C(q)$.
\end{proof}

\begin{example}
    Take $t = \frac{1}{3}$. The corresponding Markov number is $13$, and its deformed squared is 
    $$
    M_{1/3}(q) = 4q^{-3} + 18q^{-2} + 38q^{-1} + 49 + 38q + 18q^2 + 4q^3.
    $$ 
    \noindent The Farey triple corresponding to this is $\left(\frac{0}{1},\frac{1}{2},\frac{1}{3}\right)$, so the continued fraction is 
    \begin{align*}
    F^+_{1/3}(q) &= [2(q+q^{-1})+2,1,q+q^{-1}+1,2(q+q^{-1})+2]\\
    &= \frac{4q^6 + 18q^5 + 38q^4 + 49q^3 + 38q^2 + 18q + 4}{2q^5 + 6q^4 + 9q^3 + 6q^2 + 2q}.\\
    \end{align*}
\end{example}

\begin{remark}
\begin{enumerate}
    \item[(i)] The specialization $q=i$ corresponds to $k=0$ in the generalized Markov triples, which gives back the usual Markov numbers. It means that the deformed squared Markov numbers contain the usual Markov numbers in their value at $q =i$.
    \item[(ii)] The specialization $q = \frac{1+i\sqrt{3}}{2}$ at a $6$th root of unity corresponds to $k = 1$ for generalized Markov numbers, and this case was studied in \cite{BS24}.
    \item[(iii)] In \cite{GM23}, it is shown in particular that every $k$-GM triple appears exactly once in the $k$-generalized Markov tree. However, this result fails for $k <0$. For example, at $q = \frac{-1 + i\sqrt{3}}{2}$, we get $k = q + q^{-1} = -1$, and all the squared Markov numbers collapse to $1$, because the mutation rule is degenerate in this case and send $(1,1,1)$ to itself. 
\end{enumerate}
\end{remark}

\subsection{Super Markov numbers}~\label{subsec:super}

In decorated Teichmüller theory, a marked surface is given a hyperbolic metric, where there is a cusp at each marked point. Ideal arcs on the surface have an invariant called the lambda-length, which is dependent on chosen horocycles at the marked points. Under a fixed triangulation of the marked surface, these lambda-lengths define global coordinates on the decorated Teichmüller space \cite{P87}. 

Lambda-lengths were shown to be a set of cluster coordinates for a cluster algebra from marked surfaces \cite{FST08, FT18}. In particular, given a lambda-length coordinate system for a decorated Teichmüller space with a fixed triangulation, the change in triangulation gives rise to another lambda-length coordinate system related by the Ptolemy transformation. 

In \cite{PZ19} the authors define super-Teichmüller space with analogous super lambda-lengths. For an arc on a bordered surface with marked points, Musiker, Ovenhouse, and Zhang gave a matrix formula that yields the arc's super lambda-length \cite{Musiker2023} and show that the super lambda lengths give a structure of a super cluster algebra. We follow the exposition in \cite{MOZ21, M25}. 
Let $P$ be a polygon. The \defin{decorated super-Teichmüller space} of $P$ with oriented triangulation $T$ is a superalgebra with even generators $\lambda_{ij}$ called the \defin{super lambda-lengths} and for each triangle with vertices $i,j,k$, an odd generator $\boxed{ijk}$. When two triangulations are related by the \defin{super Ptolemy transformation} given below, 

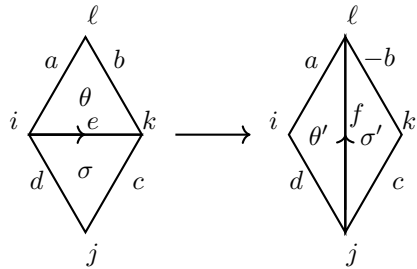
\begin{figure}[h!]
\centering
\begin{tikzpicture}[scale=0.5, baseline, thick]

    \draw (0,0)--(3,0)--(60:3)--cycle;
    \draw (0,0)--(3,0)--(-60:3)--cycle;

    \draw[->, thick] (0,0) -- (1.5,0);

    \draw node[above] at (0.6, 1.5){$a$};
    \draw node[above] at (2.4, 1.5){$b$};
    \draw node[above] at (-0.4, -0.1){$i$};
    \draw node[above] at (3.2, -0.1){$k$};
    \draw node[above] at (0.2, -1.7){$d$};
    \draw node[above] at (2.9, -1.7){$c$};
    \draw node at (1.7, -3.2){$j$};
    \draw node at (1.7, 3.2){$\ell$};
    \draw node at (1.7,0.3){$e$};
    
    \draw node at (1.5,1){$\theta$};
    \draw node at (1.5,-1){$\sigma$};
\end{tikzpicture}
\begin{tikzpicture}[baseline]
    \draw[->, thick](0,0)--(1,0);
    \node[above]  at (0.5,0) {};
\end{tikzpicture}
\begin{tikzpicture}[scale=0.5, baseline, thick,every node/.style={sloped,allow upside down}]
    \draw (0,0)--(60:3)--(-60:3)--cycle;
    \draw (3,0)--(60:3)--(-60:3)--cycle;

    \draw[->, thick] (1.5,-2) -- (1.5,0);

    \draw node[above] at (0.6, 1.5){$a$};
    \draw node[above] at (2.4, 1.5){$-b$};
    \draw node[above] at (-0.4, -0.1){$i$};
    \draw node[above] at (3.2, -0.1){$k$};
    \draw node[above] at (0.2, -1.7){$d$};
    \draw node[above] at (2.9, -1.7){$c$};
    \draw node at (1.7, -3.2){$j$};
    \draw node at (1.7, 3.2){$\ell$};
    \draw node at (1.8,0.5){$f$};
    
    \draw node at (0.8, 0){$\theta'$};
    \draw node at (2.2, 0){$\sigma'$};
\end{tikzpicture}

\caption{Super Ptolemy transformation, where $-$ indicates opposite orientation.}
\label{fig:super_ptolemy}
\end{figure}

\noindent new elements of the superalgebra are defined by the super Ptolemy relations:
\begin{align*}
    ef=ac+bd+\sqrt{abcd}\sigma\theta,\\
    \sigma'=\frac{\sigma\sqrt{bd}-\theta\sqrt{ac}}{\sqrt{ac+bd}}, \\
    \theta'=\frac{\theta\sqrt{bd}+\sigma\sqrt{ac}}{\sqrt{ac+bd}},
\end{align*}
where the order of multiplying $\sigma$ and $\theta$ together depends on the orientation of the edge being flipped. The new cluster variables, $\theta$ and $\sigma$, satisfy that $(\sigma\theta)^2=0$. The chosen orientation of the edges of a triangulation, modulo an equivalence relation given in Figure \ref{fig:equivalence-relation}, forms a \defin{spin structure}.

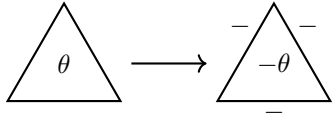
\begin{figure}[h!]
\begin{tikzpicture}[scale=0.5, baseline, thick]

    \draw (0,0)--(3,0)--(60:3)--cycle;
    
    \draw node at (1.5,1){$\theta$};
\end{tikzpicture}
\begin{tikzpicture}[baseline]
    \draw[->, thick](0,0.5)--(1,0.5);
    \node[above]  at (0.5,0) {};
\end{tikzpicture}
\begin{tikzpicture}[scale=0.5, baseline, thick]

    \draw (0,0)--(3,0)--(60:3)--cycle;

    \draw node[above] at (0.6, 1.5){$-$};
    \draw node[above] at (2.4, 1.5){$-$};

    \draw node at (1.5,-0.3){$-$};
    
    \draw node at (1.5,1){$-\theta$};
\end{tikzpicture}

    \caption{Equivalence relation, where $-$ indicates the edge with opposite orientation}
    \label{fig:equivalence-relation}
\end{figure}

In \cite{Huang2023} Huang, Penner, and Zeitlin study the decorated super-Teichmüller space of a once punctured torus. The decorated super-Teichmüller space of a once punctured torus with spin structure corresponding to a triangulation cyclically oriented as pictured in Figure \ref{fig:triangulation-once-punctured-torus} was shown to have its super lambda-lengths satisfy an equation called the \defin{super Markov equation} \cite{Huang2023} (Equation 26):
\[x^2+y^2+z^2+(xy+yz+xz)\sigma\theta =3(1+\sigma\theta)xyz.\]
Solutions to the above equation starting with $(1,1,1)$  are called \defin{super Markov numbers}. The mutation rule for super Markov numbers is given by \cite{M25}:
\[x'=\frac{y^2+yz\sigma\theta+z^2}{x}.\]

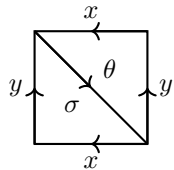
\begin{figure}[h]
    \centering
    \begin{tikzpicture}[scale=0.5, baseline, thick]

    \draw (0,0)--(3,0)--(3,3)--(0,3)--cycle;
    \draw (3,0) -- (0,3);
    \draw[->, thick] (0,3) -- (1.5, 1.5);
    \draw[->, thick] (3,0) -- (3, 1.5);
    \draw[->, thick] (0,0) -- (0, 1.5);
    \draw[->, thick] (3,3) -- (1.5, 3);
    \draw[->, thick] (3,0) -- (1.5, 0);
    
    \draw node at (1,1){$\sigma$};
    \draw node at (2, 2) {$\theta$};

    \draw node at (-0.5, 1.5){$y$};
    \draw node at (3.5, 1.5){$y$};
    \draw node at (1.5, -0.5){$x$};
    \draw node at (1.5, 3.5){$x$};

\end{tikzpicture}
    \caption{Spin structure on the once-punctured torus induced from cyclic orientation of two triangles}
    \label{fig:triangulation-once-punctured-torus}
\end{figure}

In \cite{M25}, Musiker poses the following conjecture (Conjecture 5.1):

\begin{conjecture}~\label{conj:Musiker}
    Assuming the spin-structure on the once-punctured torus induced from the cyclic orientation of the two triangles in the fundamental domain (clockwise on one of them, counter-clockwise on the other), the formulas for super lambda-lengths of arcs in the associated decorated super-Teichmüller space can all be expressed using only positive coefficients.
\end{conjecture}

When we specialize the deformed squared Markov equation we get the super Markov numbers. Therefore, Proposition \ref{thm:positivity} proves this conjecture.

\begin{corollary}
    Conjecture 5.1. in~\cite{M25} is true assuming we start with an initial triangulation where all three super lambda-lengths equal 1.
\end{corollary}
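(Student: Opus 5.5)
The plan is to obtain the super Markov triples as a specialization of the deformed squared Markov tree, and then read off positivity from Theorem~\ref{thm:positivity}. Let $\varepsilon=\sigma\theta$; it is an even element with $\varepsilon^2=0$. Consider the ring homomorphism $\phi$ from the ring of symmetric Laurent polynomials $\ZZ[q+q^{-1}]$ to $\ZZ[\varepsilon]/(\varepsilon^2)$ given by $q+q^{-1}\mapsto\varepsilon$. Every symmetric Laurent polynomial is a polynomial in $q+q^{-1}$, so $\phi$ is well defined.

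Under $\phi$, the deformed squared Markov equation~\eqref{eq:mirrorMarkov} becomes exactly the super Markov equation
\[
x^2+y^2+z^2+\varepsilon(xy+yz+xz)=3(1+\varepsilon)xyz.
\]
The mutation rule~\eqref{eq:squared mutation rule} becomes Musiker's rule $x'=(y^2+\varepsilon yz+z^2)/x$. Here I would use the polynomial form $A'=3(1+q+q^{-1})BC-(q+q^{-1})(B+C)-A$, so that $\phi$ commutes with mutation without worrying about division.

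Both trees start at $(1,1,1)$ with all three lambda-lengths equal to $1$. Mutation is therefore compatible with $\phi$, and induction along the tree shows that $\phi$ sends each vertex of the deformed squared Markov tree to the corresponding super Markov triple. Thus every super lambda-length of an arc equals $\phi(A)$ for some entry $A$ of the tree. Arcs on the once-punctured torus correspond to Markov-tree positions, so all arcs are covered.

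For positivity, write $A$ as a polynomial in $u=q+q^{-1}$. By Theorem~\ref{thm:positivity}, $A$ is a Laurent polynomial in $q$ with positive coefficients. This does not directly say that its coefficients in the $u$-basis are nonnegative, and this is the main obstacle.

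The first thing I would try is to use the continued fraction $F^+_t(q)$ from the proof of Theorem~\ref{thm:positivity}. Its entries are $2u+2$, $u+2$, $3u+2$, $1$, and $b_n-1$. These are polynomials in $u$ with nonnegative coefficients, provided $b_n-1$ also is, which holds since the last and first entries have constant term greater than $2$ by the technical lemma. The numerator of a continued fraction is a positive expression (continuant) in its entries, so $A$ is a polynomial in $u$ with nonnegative integer coefficients. Applying $\phi$ sets $u=\varepsilon$ and truncates at $\varepsilon^2$, which gives $\phi(A)=a+b\,\sigma\theta$ with $a,b\geq 0$. This is the claimed positive expression.

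Finally, I would check that the sign convention for $\sigma\theta$ in the cyclic spin structure matches the $+\varepsilon$ in the equation. This holds by Huang–Penner–Zeitlin's Equation 26 as quoted.
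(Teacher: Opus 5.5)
Your proposal is correct and follows the paper's proof: specialize $q+q^{-1}=\varepsilon=\sigma\theta$ and read off positivity from the continued fraction $F^+_t$ evaluated at $\varepsilon$. You go further than the paper in one useful respect. You state explicitly that $q$-positivity from Theorem~\ref{thm:positivity} does not by itself give positivity in the basis $u=q+q^{-1}$. You then justify it: the entries of $F^+_t$ are $2u+2$, $u+2$, $3u+2$, $1$ and $b_n-1\in\{2u+1,u+1\}$, so the continuant lies in $\ZZ_{\geq 0}[u]$. The paper's one-line proof leaves this implicit.
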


\begin{proof}
     Specializing at $q+q^{-1}=\varepsilon$ in the deformed squared Markov equation, we get the super Markov equation, where $\varepsilon=\sigma\theta$. If we now consider $F^+(\varepsilon)$, we obtain the positivity result for the super Markov equation in~\cite{M25} which proves the conjecture. 
\end{proof}

Now that this conjecture is proven to be true, further questions in \cite[Question 5.2, 5.3 and 5.4]{M25} can be further investigated. 

\section{Mirror deformation}~\label{sec:mirrorMarkov}

As we mentioned in the abstract, symmetric solutions of the deformed squared Markov equation factorize nicely. Now, in this section we consider `\emph{square roots}' of these solutions. We completely analyse them and prove a natural and new $q$-deformation of the Markov numbers along with developing the new combinatorics.

\subsection{The mirror Markov tree}

We will see that the symmetric Laurent polynomials that occur in the solutions of the quantized Equation~\eqref{eq:mirrorMarkov} are all of form $A(q)=a(q)a(q^{-1})$ where $a(q)$ is a polynomial. Up to multiplying with a power of $q$, $A(q)$ can be viewed as the product of two polynomials that are mirror images of each other. In this section, we will describe a mutation algorithm giving the polynomials $a(q)$ directly, developing a new $q$-deformation for Markov numbers, which we call the \defin{mirror deformation}.

We define the \defin{mirror Markov tree} as the tree when we consider the $q$-factors $x(q)$'s in $X(q)$ of the deformed squared Markov tree; so vertices of the mirror Markov tree are these new $q$-deformation for Markov numbers.

We will order our triples $(a(q),b(q),c(q))$ to satisfy $a(1)\leq b(1) \leq c(1)$. The first few mirror deformations are given below on Figure~\ref{fig:initialization}; colours of the edges will be explained later.

\begin{figure}[h!]
\centering
\begin{tikzcd}[column sep=tiny]
& {(1,1,1) } \arrow[d, no head,blue] & \\
& {(1,1,1+q) } \arrow[d, no head,blue] & \\
& {(1,1+q,1+2q+2q^2)} \arrow[ld, no head,blue] \arrow[rd, no head, red] & \\
{(1,1+2q+2q^2,2+4q+5q^2+2q^3)} & & {(1+q,1+2q+2q^2,3 + 8q + 10q^2 + 6q^3+2q^4)}
\end{tikzcd}
\caption{Initial steps of mirror Markov tree}
\label{fig:initialization}
\end{figure}
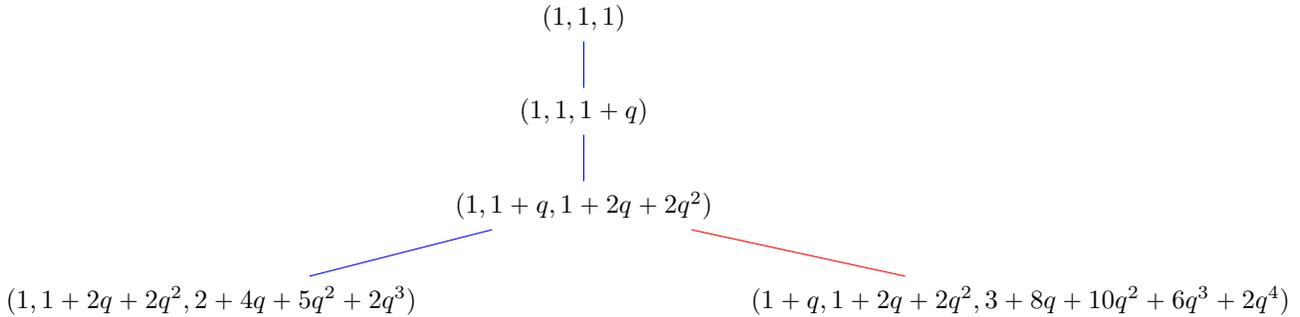

We want to remark that there is some choice involved at this point. For the Markov number $5$, the deformation of $5^2$ can be factorized as $(2+2q+q^2)(2+2q^{-1}+q^{-2})$ as easily as $(1+2q+2q^2)(1+2q^{-1}+2q^{-2})$, giving a second candidate $(2+2q+q^2)$. This duality of choices exists throughout and allows us some degree of freedom, but not complete freedom if we want a well-defined lift of the Markov mutation rule to the $q$-version. See Remark~\ref{rmk:choice}
for more discussion on this subject.

Consider a triple $t=(a(q),b(q),c(q))$ in the mirror Markov tree. Recall that we denote by $\widetilde{p}(q)$ the mirror polynomial of $p(q)$. We define a mutation rule as follows.

\begin{definition}[Mutation Rule]\label{def:positive_negative_mut}
$$
\mu_\pm(a(q),b(q),c(q))\rightarrow\begin{cases}
\left(b(q),c(q),q^{\deg(c)}\cfrac{q^\pm B(q)+C(q)}{\tilde{a}(q)}\right) \quad \text{(right-edge)}\\[15pt]
\left(a(q),c(q),q^{\deg(c)}\cfrac{q^\pm A(q)+C(q)}{\tilde{b}(q)}\right) \quad \text{(left-edge)}\\
\end{cases}
$$
\end{definition}

Mutation at $a(q)$ replaces $a(q)$ with $q^{\deg(c)}(q^{\pm 1}B(q) + C(q))/\tilde{a}(q)$ where the mutation is called a positive mutation (denoted $\mu_+$) or a negative mutation (denoted $\mu_-)$ to match the sign of the power of $q$.

Similarly, mutation at $b(q)$ replaces $b(q)$ with $(q^{\pm 1 }A(q) + C(q))/\tilde{b}(q)$ where the mutation is again called a positive mutation (denoted $\mu_+$) or a negative mutation (denoted $\mu_-)$ to match the sign of the power of $q$.

We visualize the mutation operation by using an edge going to the right for a mutation at $a(q)$ (it will be called a \defin{right-edge}) and an edge going to the left represents a mutation at $b(q)$ (it will be called a \defin{left-edge}). 

We use the colour $blue$ to depict edges with positive mutation and $red$ to depict edges with negative mutation. Now, the question is how we decide on whether to perform a positive or negative mutation rule at a triple. We make this precise with the following Definition~\ref{def:mutationrule}.

\begin{definition}[Sign of the Mutation Rule] 
\label{def:mutationrule}
Consider a triple $t$ in the mirror Markov tree. Denote by $t'$ the parent of $t$ and by $\epsilon$ the sign of the mutation $t' \rightarrow t$. Then the signs of the mutations from $t$ are defined as follows

\textbf{I.} The left-edge mutation from $t$ has sign $\epsilon$.

\textbf{II.} The right-edge mutation from $t$ has sign $-\epsilon$.
\end{definition}

See Figure~\ref{fig:mutationrules} for a visual depiction of the rules.

\begin{figure}[H]
    \centering
    \begin{tikzpicture}
    \begin{scope}[scale=0.9]
        \draw[line width=1pt,blue] (0,1)--(0,0);
        \node[blue] at (0.2,0.5) {$\epsilon$};
        \draw[line width=1pt,blue] (0,0)--(-1,-1);
        \node[blue] at (-0.2,-0.5) {$\epsilon$};
        \draw[line width=1pt,red] (0,0)--(1,-1);
        \node[red] at (1,-0.5) {$-\epsilon$};
    \end{scope}

    \begin{scope}[scale=0.7,shift={(5,0)}]
        \draw[line width=1pt,red] (0,1)--(0,0);
        \node[red] at (0.2,0.5) {$\epsilon$};
        \draw[line width=1pt,red] (0,0)--(-1,-1);
        \node[red] at (-0.2,-0.5) {$\epsilon$};
        \draw[line width=1pt,blue] (0,0)--(1,-1);
        \node[blue] at (1,-0.5) {$-\epsilon$};
    \end{scope}
\end{tikzpicture}
\caption{Mutation rules}
\label{fig:mutationrules}
\end{figure}
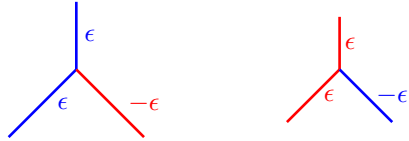

We prove in the following Theorem~\ref{thm:mutation} that this choice of mutation rule ensures that the division gives an actual polynomial, i.e. we have a well-defined mutation. Note that this fact does not automatically follow from the definition of mutation. For example, in the left-edge mutation from $(1,5,13)_q$, the negative mutation gives

$$
\mu_-(1,1+2q+2q^2, 2+4q+5q^2+ 2q^3) = \frac{4+18q+39q^2+49q^3+38q^4 + 18q^5 + 4q^6}{2+2q+q^2},
$$
which is not a polynomial in $q$.

\begin{remark}~\label{rem:initial_choice}
\begin{enumerate}
    \item[(i)]~\label{rmk:choice} Note that the proof of the recursion rule does not depend on the initial choices we made. Different initial choices result in trees with similar recursive structure, where colours are shifted and some deformations are replaced with their mirror images, with the resulting combinatorics remaining unchanged. Our particular choice favours the mutation $\mu_+$ at the junctures where both mutations give polynomials.
    \item[(ii)] The mutation rule in Definition \ref{def:mutationrule} may seem complicated with this division by the mirror image of the previous polynomial. Actually, this rule was chosen so that the sign patterns in the mirror Markov tree is as simple as possible. In particular, if one removes the tilde in the mutation rule, then the sign pattern would have recursive rules of order two instead of one. 
\end{enumerate}

\end{remark}

In Figure \ref{fig:markov_tree3} below, are the first levels of the tree, and in Figure \ref{fig:colour_pattern} is the general colour pattern of the tree. 

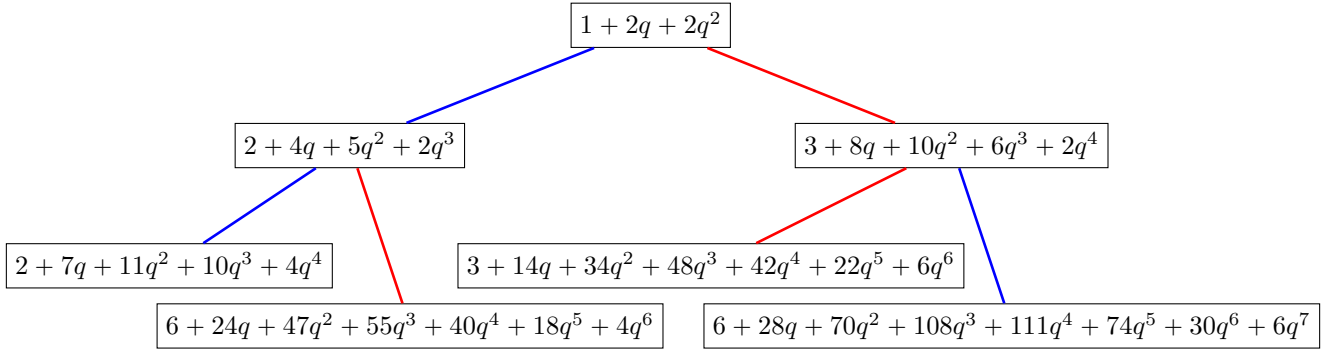
\begin{figure}[h]
\begin{tikzpicture}[scale=0.8]
\node[draw,rectangle] (A) at(0,0) {$1+2q+2q^2$};\node[draw,rectangle] (C) at(5,-2) {$3 + 8q + 10q^{2} + 6q^{3} + 2q^{4}$} ;
\node[draw,rectangle] (B) at(-5,-2) {$2+4q+5q^2 + 2q^3$} ;
\node[draw,rectangle] (D) at(-8,-4) {$2 + 7q + 11q^2 + 10q^3 + 4q^4$} ;
\node[draw,rectangle] (E) at(-4,-5) {$6 + 24q+ 47q^2 + 55q^3 + 40q^4 + 18q^5 + 4q^6$} ;
\node[draw,rectangle] (F) at(1,-4) {$3 + 14q + 34q^2 + 48q^3 + 42q^4 + 22q^5 + 6q^6$} ;
\node[draw,rectangle] (G) at(6,-5) {$6 + 28q + 70q^2 + 108q^3 + 111q^4 + 74q^5 + 30q^6 + 6q^7$} ;

\draw[line width=1pt, blue] (A)-- (B);
\draw[line width=1pt, red] (A)-- (C);
\draw[line width=1pt,blue] (B)-- (D);
\draw[line width=1pt, red] (B)-- (E);
\draw[line width=1pt,red] (C)-- (F);
\draw[line width=1pt,blue] (C)-- (G);
\end{tikzpicture}
\caption{Mirror deformed Markov numbers}
\label{fig:markov_tree3}
\end{figure}

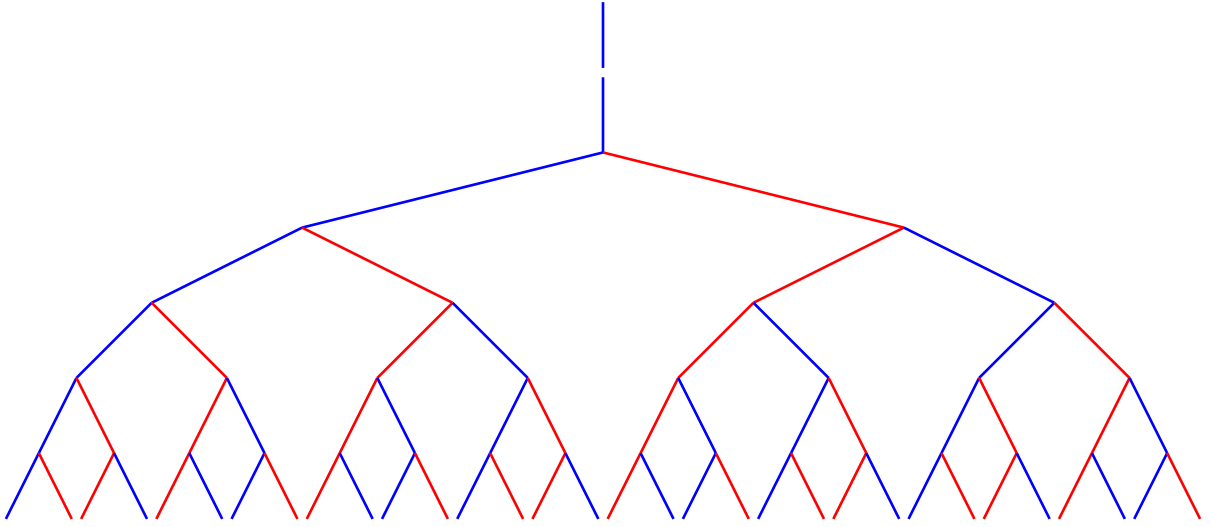
\begin{figure}[H]
    \centering
    \begin{tikzpicture}
            \node (A0) at(0,2) {};
            \node (A1) at(0,1) {};

            \node (A) at(0,0) {};
            
            \node (B1) at(-4,-1) {};
            \node (B2) at(4,-1) {};
            
            \node (C1) at(-6,-2) {};
            \node (C2) at(-2,-2) {};
            \node (C3) at(2,-2) {};
            \node (C4) at(6,-2) {};
            
            \node (D1) at(-7,-3) {};
            \node (D2) at(-5,-3) {};
            \node (D3) at(-3,-3) {};
            \node (D4) at(-1,-3) {};
            \node (D5) at(1,-3) {};
            \node (D6) at(3,-3) {};
            \node (D7) at(5,-3) {};
            \node (D8) at(7,-3) {};
            
            \node (E1) at(-7.5,-4) {};
            \node (E2) at(-6.5,-4) {};
            
            \node (E3) at(-5.5,-4) {};
            \node (E4) at(-4.5,-4) {};

            \node (E5) at(-3.5,-4) {};
            \node (E6) at(-2.5,-4) {};
            
            \node (E7) at(-1.5,-4) {};
            \node (E8) at(-0.5,-4) {};

            \node (E9) at(0.5,-4) {};
            \node (E10) at(1.5,-4) {};

            \node (E11) at(2.5,-4) {};
            \node (E12) at(3.5,-4) {};
            
            \node (E13) at(4.5,-4) {};
            \node (E14) at(5.5,-4) {};

            \node (E15) at(6.5,-4) {};
            \node (E16) at(7.5,-4) {};
            
            \node (F1) at(-8,-5) {};
            \node (F2) at(-7,-5) {};
            
            \node (F3) at(-7,-5) {};
            \node (F4) at(-6,-5) {};
            
            \node (F5) at(-6,-5) {};
            \node (F6) at(-5,-5) {};
            
            \node (F7) at(-5,-5) {};
            \node (F8) at(-4,-5) {};

            \node (F9) at(-4,-5) {};
            \node (F10) at(-3,-5) {};

            \node (F11) at(-3,-5) {};
            \node (F12) at(-2,-5) {};

            \node (F13) at(-2,-5) {};
            \node (F14) at(-1,-5) {};

            \node (F15) at(-1,-5) {};
            \node (F16) at(0,-5) {};

            \node (F17) at(0,-5) {};
            \node (F18) at(1,-5) {};

            \node (F19) at(1,-5) {};
            \node (F20) at(2,-5) {};

            \node (F21) at(2,-5) {};
            \node (F22) at(3,-5) {};

            \node (F23) at(3,-5) {};
            \node (F24) at(4,-5) {};

            \node (F25) at(4,-5) {};
            \node (F26) at(5,-5) {};

            \node (F27) at(5,-5) {};
            \node (F28) at(6,-5) {};

            \node (F29) at(6,-5) {};
            \node (F30) at(7,-5) {};

            \node (F31) at(7,-5) {};
            \node (F32) at(8,-5) {};

            \draw[line width=1pt,blue] (A0.center)--(A1);
            \draw[line width=1pt,blue] (A1.center)--(A.center);

            \draw[line width=1pt,blue] (A.center)-- (B1.center);
            \draw[line width=1pt,red] (A.center)-- (B2.center);
            
            \draw[line width=1pt,blue] (B1.center)-- (C1.center);
            \draw[line width=1pt,red] (B1.center)-- (C2.center);
            \draw[line width=1pt,red] (B2.center)-- (C3.center);
            \draw[line width=1pt,blue] (B2.center)-- (C4.center);
            
            \draw[line width=1pt,blue] (C1.center)-- (D1.center);
            \draw[line width=1pt,red] (C1.center)-- (D2.center);
            \draw[line width=1pt,red] (C2.center)-- (D3.center);
            \draw[line width=1pt,blue] (C2.center)-- (D4.center);
            \draw[line width=1pt,red] (C3.center)-- (D5.center);
            \draw[line width=1pt,blue] (C3.center)-- (D6.center);
            \draw[line width=1pt,blue] (C4.center)-- (D7.center);
            \draw[line width=1pt,red] (C4.center)-- (D8.center);
            
            \draw[line width=1pt,blue] (D1.center)-- (E1.center);
            \draw[line width=1pt,red] (D1.center)-- (E2.center);

            \draw[line width=1pt,red] (D2.center)-- (E3.center);
            \draw[line width=1pt,blue] (D2.center)-- (E4.center);
            
            \draw[line width=1pt,red] (D3.center)-- (E5.center);
            \draw[line width=1pt,blue] (D3.center)-- (E6.center);

            \draw[line width=1pt,blue] (D4.center)-- (E7.center);
            \draw[line width=1pt,red] (D4.center)-- (E8.center);
            
            \draw[line width=1pt,red] (D5.center)-- (E9.center);
            \draw[line width=1pt,blue] (D5.center)-- (E10.center);

            \draw[line width=1pt,blue] (D6.center)-- (E11.center);
            \draw[line width=1pt,red] (D6.center)-- (E12.center);
            
            \draw[line width=1pt,blue] (D7.center)-- (E13.center);
            \draw[line width=1pt,red] (D7.center)-- (E14.center);

            \draw[line width=1pt,red] (D8.center)-- (E15.center);
            \draw[line width=1pt,blue] (D8.center)-- (E16.center);

            \draw[line width=1pt,blue] (E1.center)-- (F1);
            \draw[line width=1pt,red] (E1.center)-- (F2);
            
            \draw[line width=1pt,red] (E2.center)-- (F3);
            \draw[line width=1pt,blue] (E2.center)-- (F4);
            
            \draw[line width=1pt,red] (E3.center)-- (F5);
            \draw[line width=1pt,blue] (E3.center)-- (F6);
            
            \draw[line width=1pt,blue] (E4.center)-- (F7);
            \draw[line width=1pt,red] (E4.center)-- (F8);
            
            \draw[line width=1pt,red] (E5.center)-- (F9);
            \draw[line width=1pt,blue] (E5.center)-- (F10);

            \draw[line width=1pt,blue] (E6.center)-- (F11);
            \draw[line width=1pt,red] (E6.center)-- (F12);

            \draw[line width=1pt,blue] (E7.center)-- (F13);
            \draw[line width=1pt,red] (E7.center)-- (F14);

            \draw[line width=1pt,red] (E8.center)-- (F15);
            \draw[line width=1pt,blue] (E8.center)-- (F16);

            \draw[line width=1pt,red] (E9.center)-- (F17);
            \draw[line width=1pt,blue] (E9.center)-- (F18);

            \draw[line width=1pt,blue] (E10.center)-- (F19);
            \draw[line width=1pt,red] (E10.center)-- (F20);

            \draw[line width=1pt,blue] (E11.center)-- (F21);
            \draw[line width=1pt,red] (E11.center)-- (F22);

            \draw[line width=1pt,red] (E12.center)-- (F23);
            \draw[line width=1pt,blue] (E12.center)-- (F24);

            \draw[line width=1pt,blue] (E13.center)-- (F25);
            \draw[line width=1pt,red] (E13.center)-- (F26);

            \draw[line width=1pt,red] (E14.center)-- (F27);
            \draw[line width=1pt,blue] (E14.center)-- (F28);

            \draw[line width=1pt,red] (E15.center)-- (F29);
            \draw[line width=1pt,blue] (E15.center)-- (F30);

            \draw[line width=1pt,blue] (E16.center)-- (F31);
            \draw[line width=1pt,red] (E16.center)-- (F32);
\end{tikzpicture}
\caption{colour pattern in the tree}
\label{fig:colour_pattern}
\end{figure}

\vspace{1mm}

\begin{theorem}~\label{thm:mutation} 
With the initial segment and the mutation rule from Definition~\ref{def:mutationrule}, every term that appears in the mirror deformed Markov tree is a polynomial in $q$.
\end{theorem}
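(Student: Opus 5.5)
The plan is an induction down the mirror Markov tree, proving a statement stronger than polynomiality. Everything rests on one identity. In the deformed squared mutation \eqref{eq:squared mutation rule} the numerator factors as
\[
B^2+(q+q^{-1})BC+C^2=(qB+C)(q^{-1}B+C).
\]
Write $A=a(q)a(q^{-1})$ and use $\tilde a(q)=q^{\deg a}a(q^{-1})$, so that $\tilde a(q)\tilde a(q^{-1})=A$. Then the candidate $a'=q^{\deg c}(q^{\pm1}B+C)/\tilde a$ formally satisfies
\[
a'(q)a'(q^{-1})=\frac{(q^{\pm1}B+C)(q^{\mp1}B+C)}{A}=A'.
\]
Hence, once $a'$ is known to be a polynomial, the new triple automatically consists of square roots of the corresponding vertex of the deformed squared Markov tree. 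The whole content of Theorem~\ref{thm:mutation} is therefore the divisibility
\[
\tilde a \mid q^{\epsilon}B+C \quad\text{in } \mathbb Z[q^{\pm1}],
\]
for the sign $\epsilon$ prescribed by Definition~\ref{def:mutationrule}. The same holds with $a$ replaced by $b$ for left edges. Note that $A \mid (qB+C)(q^{-1}B+C)$ is already known, because $A'$ is a Laurent polynomial with positive coefficients by Theorem~\ref{thm:positivity}. What has to be shown is that the factor $\tilde a$ goes entirely into the correct one of the two factors.

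The induction hypothesis will record the two exchange relations created when a vertex is born. Suppose $w$ was obtained from the parent triple by mutating $x$ with sign $\epsilon$ in the presence of $y,z$. Then
\[
\tilde x\,w=q^{\deg z}(q^{\epsilon}Y+Z).
\]
Substituting $q\mapsto q^{-1}$ and multiplying by a suitable power of $q$ gives the companion relation
\[
x\,\tilde w=q^{N}(q^{-\epsilon}Y+Z)
\]
for some integer $N$. I would carry both relations, together with the relations inherited from earlier levels, as the hypothesis for every triple. The hypothesis also records the sign $\epsilon$ of the edge into the triple, as in Figure~\ref{fig:mutationrules}.

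For the inductive step, consider a triple $(a,b,c)$ where $c$ was just created from $(x,a,b)$, with $x$ removed. There are two cases. For the left edge, mutate at $b$ with sign $\epsilon$. For the right edge, mutate at $a$ with sign $-\epsilon$. In each case I will reduce $q^{\pm1}A+C$ (respectively $q^{\pm1}B+C$) modulo $\tilde b$ (respectively $\tilde a$). To do this I substitute $C$ from its birth relation, $c\,\tilde x=q^{\deg b}(q^{\epsilon}B+\dots)$, and its companion. I then use the older relation linking $x$ to $a$ and $b$, which expresses $\tilde x$ or $x$ times $\tilde b$ (or $\tilde a$) through $q^{\pm1}$-combinations of the remaining squares. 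The goal is a Ptolemy/skein-type identity of the form
\[
\tilde x\,(q^{s}B+C)=\tilde a\cdot(\text{explicit polynomial})
\]
(or its analogue for $b$). Ideally this comes out as an explicit linear formula of the type $a'=(\text{polynomial in } b, c, \tilde b, \tilde c)-q^{k}\,\cdot(\text{mirror of }x)$, analogous to $x'=3yz-x$ in the classical case. Such a formula gives polynomiality at once and also propagates the two relations to the next level. The base cases are the initial segment $(1,1,1)\to(1,1,1+q)\to(1,1+q,1+2q+2q^2)$ and its two children, which can be checked directly from Figure~\ref{fig:initialization}.

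The main obstacle is this inductive identity, specifically the bookkeeping of signs and powers of $q$. The sign rule (I: inherit $\epsilon$ on the left edge; II: flip to $-\epsilon$ on the right edge) must be exactly what makes the $q$-exponents in the Ptolemy-type identity match. The counterexample after Definition~\ref{def:mutationrule} shows that the wrong sign really does destroy divisibility. My first attempt will be to guess the linear formula for $a'$ by experimenting on the levels of Figure~\ref{fig:markov_tree3}, then verify it algebraically using the degenerate identity $A'=3(1+q+q^{-1})BC-(q+q^{-1})(B+C)-A$.

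If a clean linear formula does not materialize, there is a fallback. Work in $\mathbb Z[q]$, which is a UFD, and use Gauss's lemma to reduce to showing that no irreducible factor of $\tilde a$ divides the wrong factor $q^{\mp1}B+C$. This would be deduced from the companion relation: the wrong factor equals a power of $q$ times $a\cdot(\text{known polynomial})$, combined with a comparison of degrees of both sides. The case where $a$ and $\tilde a$ share factors needs separate care, via a multiplicity count.
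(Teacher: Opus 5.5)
\textbf{There is a genuine gap: the proposal reduces the theorem to a divisibility statement but never proves that divisibility.}

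Your reduction is correct. The identity $a'(q)a'(q^{-1})=A'$ holds formally. So, with $\epsilon$ the sign of the edge into $(a,b,c)$, the theorem is exactly the divisibility $\tilde a\mid q^{-\epsilon}B+C$ for the right edge and $\tilde b\mid q^{\epsilon}A+C$ for the left edge. But that divisibility is the whole theorem, and you only describe the shape a proof might take. Neither of your two routes closes the gap.

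Your preferred route, a general linear recurrence analogous to $x'=3yz-x$, is essentially the open conjecture of Section~\ref{sec:open}. The paper obtains such recurrences only on the Fibonacci and Pell branches, and derives them from the mutation rule itself. Even there the coefficient is an auxiliary deformation of the fixed entry: for example $p_{n+1}=2[3]_q\tilde p_n-p_{n-1}$ has $2$ rather than $m_2=1+q$. So it cannot serve as the inductive engine.

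Your fallback is circular. The statement ``the wrong factor $q^{-\epsilon}B+C$ equals a power of $q$ times $a\cdot(\dots)$'' is just the mirror image of the divisibility you want. Moreover, $A\mid(qB+C)(q^{-1}B+C)$ together with Gauss's lemma cannot decide which of the two mirror-image factors absorbs a given irreducible factor of $\tilde a$. That freedom is exactly the freedom of choosing a square root of $A'$. A smaller point: in the birth relation the shift sits on the non-maximal survivor, so it reads $c\,\tilde x=q^{\deg b}(q^{\epsilon}A+B)$, not $q^{\epsilon}B+\cdots$.

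The missing step is a short computation. Let $x$ be the entry removed when $c$ was created, so that $CX=A^2+(q+q^{-1})AB+B^2$. Then
\begin{align*}
X(q^{\epsilon}A+C)&=B\bigl(B+(q+q^{-1})A\bigr)+A\,(q^{\epsilon}X+A),\\
X(q^{-\epsilon}B+C)&=A\bigl(A+(q+q^{-1})B\bigr)+B\,(q^{-\epsilon}X+B).
\end{align*}
The first summands are divisible by $\tilde b$, respectively $\tilde a$, because $B=q^{-\deg b}\,b\tilde b$ and $A=q^{-\deg a}\,a\tilde a$.

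For the left edge, $\tilde b\mid q^{\epsilon}X+A$ is the mirrored birth relation of $b$ from the grandparent triple. Your companion relations do supply this. Rules I and II are exactly what make it read $q^{\epsilon}X+A$ (up to a power of $q$), whether the parent was reached by a left or a right edge.

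For the right edge, $\tilde a\mid q^{-\epsilon}X+B$ comes from the \emph{sibling} of $(a,b,c)$. This sibling is obtained from the parent by mutating $a$ with sign $-\epsilon$, which gives $s\,\tilde a=q^{\deg b}(q^{-\epsilon}X+B)$. This relation is not among those you planned to carry, so the induction has to run level by level over the whole tree. This is where rule II enters: the right edge from $(a,b,c)$ must carry the same sign $-\epsilon$ as the sibling edge.

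Finally you must cancel $X$. This works because $X$ is coprime to $A$ and $B$: they lie in one deformed squared triple, and pairwise coprimality is preserved by the squared mutation.
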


\begin{proof}
We keep the same notations as in Definition~\ref{def:mutationrule}. Let us set $t = (a_1,a_2,a_3)$.\\
\textbf{I.} Let us consider the left-edge going from $t$. By the left mutation rule, the sign of this mutation is the same as the mutation $t'\rightarrow t$, and it creates the new variable 
$$
a_0 := q^{\deg(a_3)}\frac{(q^{\epsilon}A_1 + A_3)}{\widetilde{a_2}}.
$$

\noindent As $a_3$ comes from the mutation of some polynomial $a_4$ in the triple $t'$, we can write 
\begin{align*}
q^{\epsilon}A_1 + A_3 &= q^{\epsilon}A_1 + \frac{A_1^2 + (q+q^{-1})A_1A_2 + A_2^2}{A_4} \\
& = \frac{A_2}{A_4}\left(A_2 + (q+q^{-1})A_1\right) + \frac{A_1}{A_4}\left( q^{\epsilon}A_4 + A_1\right)\\
\end{align*}

\begin{center}
\begin{tikzpicture}[scale=1]
\node (B) at(0,-1.5) {$t' = (a_4,a_1,a_2) \text{ or } (a_1,a_4,a_2)$};
\node (C) at(0,-3) {$t = (a_1,a_2,a_3)$};
\node (D) at(-1,-4.5) {$(a_1,a_3,a_0)$};

\draw[line width=1pt] (B)-- (C) node[midway,left] {$\epsilon$};
\draw[line width=1pt] (C)-- (D) node[midway,above] {$\epsilon$};
\end{tikzpicture}
\end{center}

\noindent \underline{Case 1} : the mutation $t' \rightarrow t$ is a left-edge. Then $t' = (a_1,a_4,a_2)$. Let $t''$ be the parent of $t'$, such that $a_2$ comes from a mutation at some $a_5$ in $t''$, of sign $\epsilon$: 
$$
a_2 = q^{\deg(a_4)}\frac{q^{\epsilon}A_1 + A_4}{\widetilde{a_5}}.
$$
\noindent Thus we have $q^{\epsilon}A_4 + A_1 = q^{-\deg(a_4)-\epsilon}a_5\widetilde{a_2}$, so $\widetilde{a_2}$ divides $q^{\epsilon}A_1 + A_3$, and then $a_0$ is a polynomial.
\vspace{12pt}

\noindent \underline{Case 2} : the mutation $t' \rightarrow t$ is a right-edge. Then $t' = (a_4,a_1,a_2)$, and $a_2$ comes from a mutation of sign $ -\epsilon$: 
$$
a_2 = q^{\deg(a_1)}\frac{q^{-\epsilon}A_4 + A_1}{\widetilde{a_5}}.
$$
\noindent Thus we have $\widetilde{a_2}a_5 = q^{\deg(a_1)}(q^{\epsilon}A_4 + A_1)$, and then $a_0$ is a polynomial.
\vspace{12pt}

\noindent \textbf{II.} Let us now consider the right-edge going from $t$. According to the right mutation rule, it creates a new variable with sign $-\epsilon$, 
$$
a_0 = q^{\deg(a_3)}\frac{q^{-\epsilon}A_2 + A_3}{\widetilde{a_1}}.
$$
\noindent The polynomial $a_3$ still comes from a mutation at some $a_4$ in the triple $t'$, so that 
\begin{align*}
q^{-\epsilon}A_2 + A_3 &= q^{-\epsilon}A_2 + \frac{A_1^2 + (q+q^{-1})A_1A_2 + A_2^2}{A_4} \\
& = \frac{A_1}{A_4}\left(A_1 + (q+q^{-1})A_2\right) + \frac{A_2}{A_4}\left( q^{-\epsilon}A_4 + A_2\right)\\
\end{align*}

\noindent The sibling mutation going from $t'$ has sign $-\epsilon$ and gives the triple $s = (a_4,a_2,a_5)$, with 
$$
a_5 = q^{\deg(a_2)}\frac{q^{-\epsilon}A_4 + A_2}{\widetilde{a_1}}.
$$

\noindent Then 
$$
q^{-\epsilon}A_2 + A_3 = \frac{A_1}{A_4}\left(A_1 + (q+q^{-1})A_2\right) + q^{-\deg(a_2)}\frac{A_2}{A_4}a_5\widetilde{a_1},
$$
\noindent so $\widetilde{a_1}$ divides $q^{-\epsilon}A_2 + A_3$, and $a_0$ is a polynomial.
\end{proof}

\begin{notation}~\label{not}
    For $n\geq 1$ a Markov number, let us denote by $m_n(q)\in \ZZ[q]$ the corresponding mirror deformed Markov number, obtained on the mirror Markov tree, and $M_n(q)\in \ZZ[q,q^{-1}]$ the corresponding value on the deformed squared Markov tree. Note that, for all $n$, 
\[M_n(q)=m_n(q)m_n(q^{-1}).\] 
\end{notation}

Here is a table giving the first few values of $m_n$ and $M_n$:
\[
\begin{array}{|c|c|c|}
\hline
    n & m_n & M_n \\
\hline
    1 & 1 & 1 \\[4pt]
    2 & 1+q & q^{-1} + 2 + q \\[4pt]
    5 & 1+2q+2q^2 & 2q^{-2} + 6q^{-1} + 9 + 6q + 2q^2 \\[4pt]
    13 & 2 + 4q + 5q^2 + 2q^3 & 4q^{-3} + 18q^{-2} + 38q^{-1} + 49 + 38q + 18q^2 + 4q^3\\[4pt]
    29 & 3 + 8q + 10q^{2} + 6q^{3} + 2q^{4} & \begin{array}{@{}c@{}}
         6q^{-4} + 34q^{-3} + 98q^{-2} + 176q^{-1}+ 213  \\ + 176q + 98q^2 + 34q^3 + 6q^4
    \end{array}
    \\
    \hline
\end{array}
\]

\subsection{Geometric model for mirror mutation}\label{subsect:geometry}

In this section, we give a heuristic interpretation of our deformation, using tagged triangulations on orbifold surfaces. 

Geometrically, it is known that the Markov cluster algebra arises from a once-punctured torus. For the generalized Markov equations of the type \eqref{eq:2Markov}, or more generally the ones that appear in \cite{GM23}, solutions have a structure of a \defin{generalized cluster algebra}. In \cite{CS14}, Chekhov and Shapiro gave a surface model for these types of generalized cluster algebras, using \defin{orbifold points}. A generalized exchange relation of the form
\begin{equation}\label{eq:gen_clust_mut}
    aa'=b^2 + 2\cos(\pi/p) bc + c^2
\end{equation}
corresponds to the mutation of an arc going around an orbifold point of order $p$, see Figure \ref{fig:mut_orbifold}.

\begin{figure}[H]
    \centering
    \begin{tikzpicture}
    \node at (0,0) {\begin{tikzpicture}[scale=0.8]
    \draw[very thick, red] (0,1.5) to[out=-130,in=180] (0,-0.5) to[out=0,in=-50] (0,1.5);
    \draw[very thick] (0,1.5) to[out=-150, in=150 ] (0,-1.5) to[out=30,in=-30] (0,1.5);
    \draw node at (0,0) {$\bigtimes$};
    \draw node at (0,1.5) {$\bullet$};
    \draw node at (0,-1.5) {$\bullet$};
    \draw[red] node at (0,-0.8) {$a$};
    \draw node at (-1,0) {$b$};
    \draw node at (1,0) {$c$};
\end{tikzpicture}};
\node at (1.5,0) {$\xrightarrow{\mu_a}$};
\node at (3,0) {\begin{tikzpicture}[scale=0.8]
    \draw[very thick, red] (0,-1.5) to[out=130,in=180] (0,0.5) to[out=0,in=50] (0,-1.5);
    \draw[very thick] (0,1.5) to[out=-150, in=150 ] (0,-1.5) to[out=30,in=-30] (0,1.5);
    \draw node at (0,0) {$\bigtimes$};
    \draw node at (0,1.5) {$\bullet$};
    \draw node at (0,-1.5) {$\bullet$};
    \draw[red] node at (0,0.8) {$a'$};
    \draw node at (-1,0) {$b$};
    \draw node at (1,0) {$c$};
\end{tikzpicture}};
\end{tikzpicture}
    \caption{Mutation of an arc around an orbifold point}
    \label{fig:mut_orbifold}
\end{figure}
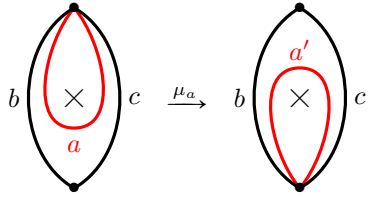

In the case of the deformed squared Markov equation, we have three generalized cluster variables, and each of them mutate via the following generalized exchange relation (see \eqref{eq:squared mutation rule}):
\begin{equation}\label{eq:exchange:XYZ}
    XX'=Y^2 + (q+q^{-1})YZ + Z^2.
\end{equation}
Thus the exchange relations are of the form \eqref{eq:gen_clust_mut}, with $q=e^{i\pi/p}$. 

In \cite{G22} and \cite{BS24}, the authors considered the following generalization of the Markov equation: $x^2 + y^2 +z^2 + xy +xz+yz=6xyz$ (a particular case of \cite{GM23}), for which the surface model is a sphere with one puncture and three orbifold points. Its triangulation is formed by three loops from the puncture, going around the orbifold point (called \emph{pending arcs}).
Our deformed squared Markov equation is a ($q$-)deformation of this generalized Markov equation, where the deformed parameter corresponds to the orders of the orbifold points. Hence, we can use the same surface model of a sphere with one puncture point and three orbifold points, except that the orders of the orbifolds are parameters, see Figure \ref{fig:mut_sphere}. 

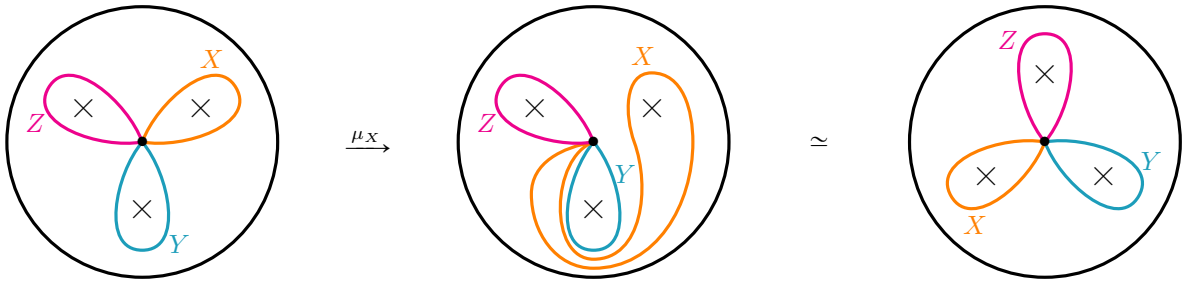
\begin{figure}[H]
    \centering
    \input{mutation_orbifold.tikz}
    \caption{Mutation of an arc in the once-punctured sphere with three orbifold points}
    \label{fig:mut_sphere}
\end{figure}

As noted in \cite{BS24}, according to the mutation rule in Figure \ref{fig:mut_orbifold}, the only effect of a mutation of one pending arc on this surface is to change the cyclic ordering of the arcs, as seen in Figure \ref{fig:mut_sphere}.

Let us assign our triples of deformed squared Markov numbers to triangulations of this surface. First, assume that the initial triangulation is $T_0=\{X,Y,Z\}$, with the loops $X,Y$ and $Z$ in clockwise order around the puncture (as in the left hand side of Figure \ref{fig:mut_sphere}). We associate that triangulation to the initial triple $(1,1,1)$. Then, if we apply any mutation we get to the next triple $(1,1,q^{-1}+2+q)$. On arcs, any mutation of $X,Y$ or $Z$ gives a triangulation homotopic to the right hand side of Figure \ref{fig:mut_sphere}. If we continue this process, along the deformed squared Markov tree, we can associate triangulations of the sphere which are homotopic to alternate cyclic orientations of the three pending arcs.

\begin{remark}
    In the classical setting ($q=1$), this assignment has a precise meaning, as it gives the geodesic lengths of the arcs in the triangulations. In our $q$-deformation, these could be interpreted as some \emph{quantum geodesic lengths}.
\end{remark}

We now consider the mirror deformation. Similarly to the work of Felikson, Shapiro and Tumarkin (see \cite[Definition 5.5]{FSM12}), for a pending arc $X$ going around such an orbifold point, we can replace the orbifold point by a puncture and the arc $X$ by a pair of arcs $x, \tilde{x}$ between the two punctures, one tagged plain and one tagged notched, as in Figure \ref{fig:pending arc to tagged arcs}. As in the cluster algebra interpretation of this replacement in previous work, we write $X=x\tilde{x}$, and assume that both arcs should always be flipped simultaneously. 

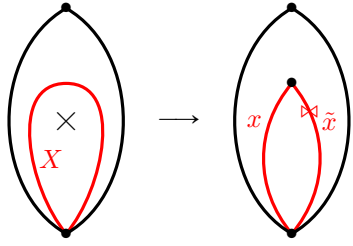
\begin{figure}[H]
    \centering
    \begin{tikzpicture}
    \node at (0,0) {\begin{tikzpicture}
    \draw[very thick, red] (0,-1.5) to[out=130,in=180] (0,0.5) to[out=0,in=50] (0,-1.5);
    \draw[very thick] (0,1.5) to[out=-150, in=150 ] (0,-1.5) to[out=30,in=-30] (0,1.5);
    \draw node at (0,0) {$\bigtimes$};
    \draw node at (0,1.5) {$\bullet$};
    \draw node at (0,-1.5) {$\bullet$};
    \draw[red] node at (-0.2,-0.5) {$X$};
    \end{tikzpicture}};
\node at (1.5,0) {$\longrightarrow$};
\node at (3,0) {\begin{tikzpicture}
    \draw[very thick, red] (0,-1.5) to[out=130,in=-130] (0,0.5) ;
    \draw[very thick,red,postaction={decorate,decoration={markings,
      mark=at position 0.2 with {\node[sloped, allow upside down] {$\bowtie$};}}}] (0,0.5) to[out=-50,in=50] (0,-1.5);
    \draw[very thick] (0,1.5) to[out=-150, in=150 ] (0,-1.5) to[out=30,in=-30] (0,1.5);
    \draw node at (0,0.5) {$\bullet$};
    \draw node at (0,1.5) {$\bullet$};
    \draw node at (0,-1.5) {$\bullet$};
    \draw[red] node at (-0.5,0) {$x$};
    \draw[red] node at (0.5,0) {$\tilde{x}$};
    \end{tikzpicture}};
\end{tikzpicture}
    \caption{Replace a pending arc by a pair of tagged arcs}
    \label{fig:pending arc to tagged arcs}
\end{figure}

In this context, we write the exchange relation \eqref{eq:exchange:XYZ} as
\[
XX'= (q^{1/2}Y + q^{-1/2}Z)(q^{-1/2}Y + q^{1/2}Z).
\]
If we decompose $X$, $Y$ and $Z$ as above as $X=x\tilde{x}$, $Y=y\tilde{y}$ and $Z=z\tilde{z}$, we can write
\[
x'\tilde{x}'=\frac{(q^{1/2}Y + q^{-1/2}Z)(q^{-1/2}Y + q^{1/2}Z)}{x\tilde{x}},
\]
and similarly for $y'\tilde{y}'$ and $ z'\tilde{z}'$.

Let us fix the following mutation rule: 
\begin{equation}\label{mutation_rule_orbifold}
    \vcenter{\hbox{\scalebox{1}{\input{mutation_pair_arcs.tikz}}} }\qquad , \, \text{then} \quad x'=\frac{q^{1/2}Y + q^{-1/2}Z}{\tilde{x}}.
\end{equation}
And similarly, $\tilde{x}'=\frac{q^{-1/2}Y + q^{1/2}Z}{x}$. Another way of writing this mutation rule is: for $x$, \emph{the coefficient $q^{1/2}$ is carried by the next loop, in the clockwise order, and the coefficient $q^{-1/2}$ by the previous loop, in the clockwise order}.
Note that the mutation rule only depends on the relative positions of the loops $X,Y,Z$.

\begin{remark}
    \begin{enumerate}
        \item[(i)] The values of $x$ and $\tilde{x}$ obtained here only differs from the one above by a power of $q^{1/2}$. Indeed, $x$, $\tilde{x}$ are defined here symmetrically with respect to $q$: $X=x\tilde{x}$, instead of $X=q^{-\deg x}x\tilde{x}$ as above, and the mutation rule corresponds to Definition \ref{def:positive_negative_mut} up to a power of $q^{1/2}$ as well (see below). That is because the these symmetric definitions and mutation rules seems more natural using the geometric model.
        \item[(ii)] Regarding Remark~\ref{rem:initial_choice} (1), there was also a choice of initial triangulation here ($X,Y,Z$ in clockwise order), the opposite choice would have given a similar pattern, with $x$ and $\tilde{x}$ possibly exchanged. There is again a choice at the second step (see the proof of Proposition~\ref{prop:mutation_orbifold}) that can give either our choice or the opposite choice. 
        \item[(iii)] Regarding Remark~\ref{rem:initial_choice} (2), it is known that for tagged arcs, the mutation changes the tagging, hence $x$ and $\tilde{x}$ are exchanged during the mutation. This explains why the mutation rule for $x$ has an $\tilde{x}$ at the denominator, the arc was tagged before the mutation.
    \end{enumerate}
\end{remark}

\begin{proposition}\label{prop:mutation_orbifold}
    The mutation rule \eqref{mutation_rule_orbifold} is the same as in Definition \ref{def:mutationrule} (up to some power of $q^{1/2}$).
\end{proposition}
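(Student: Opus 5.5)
The plan is to compare the two rules after a single change of normalization, and then to show by induction along the tree that the sign $\pm$ in Definition~\ref{def:positive_negative_mut} is exactly what the clockwise order of the three pending arcs prescribes.

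\emph{Step 1: dictionary between normalizations.} For a polynomial $x(q)$ in the mirror Markov tree, set $x_g := q^{-\deg(x)/2}x(q)$ and $\tilde{x}_g := q^{\deg(x)/2}x(q^{-1}) = q^{-\deg(x)/2}\tilde{x}(q)$. Then $x_g\tilde{x}_g = x(q)x(q^{-1}) = X$, which is the symmetric decomposition used in the geometric model. Substituting into \eqref{mutation_rule_orbifold} and writing $q^{1/2}Y+q^{-1/2}Z = q^{-1/2}(qY+Z)$ gives
\[
x'_g=\frac{q^{1/2}Y+q^{-1/2}Z}{\tilde{x}_g}= q^{\deg(x)/2-1/2}\,\frac{qY+Z}{\tilde{x}(q)} .
\]
This agrees, up to a power of $q^{1/2}$, with the positive mutation $q^{\deg(c)}(qB+C)/\tilde{a}$ when $Z=C$ is the entry of maximal degree and $Y$ is the other remaining entry. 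If instead the maximal entry is the loop that carries $q^{1/2}$, we factor out $q^{1/2}$ from the other term and obtain $q^{-1}Y+Z$, which is the negative mutation. So a geometric mutation is $\mu_+$ exactly when the loop \emph{following} the mutated arc clockwise is the non-maximal one, and $\mu_-$ otherwise. The prefactor $q^{\deg(c)}$ and the conventions $\tilde{\ }$ versus $x_g$ only change powers of $q^{1/2}$; I would also check that the tagged arc $\tilde{x}'_g$ matches the mirror of $x'$ in the same way.

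\emph{Step 2: orientation bookkeeping.} By Figure~\ref{fig:mut_sphere}, a mutation keeps the three loops in place but reverses their cyclic order. I would prove by induction along the mirror Markov tree a statement of the following form. For a triple $t=(a_1,a_2,a_3)$ reached by a mutation of sign $\epsilon$, the clockwise cyclic order of the loops is $(a_1,a_2,a_3)$ if $\epsilon=+$ and $(a_1,a_3,a_2)$ if $\epsilon=-$. The precise form of this statement is to be fixed on the first steps, and it is here that the choice of initial orientation and the second-step choice of the remark enter.

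\emph{Step 3: induction step.} From $t$, the left-edge mutates $a_2$, with neighbours $a_1$ and $a_3$ (the latter being the maximal one), and the right-edge mutates $a_1$, with neighbours $a_2$ and $a_3$. Using the orientation given by the hypothesis and the criterion from Step~1, I would check that the left-edge mutation has sign $\epsilon$ and the right-edge mutation has sign $-\epsilon$, as Definition~\ref{def:mutationrule} requires. I would then check that the reversed cyclic order of the child triple, relabelled in the increasing order $(a(1)\le b(1)\le c(1))$, satisfies the hypothesis with the new sign. The base cases are $(1,1,1)\to(1,1,1+q)\to(1,1+q,1+2q+2q^2)$, which I would compute directly.

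The main obstacle I expect is Step~3. The relabelling of positions after each mutation (the new variable becomes the maximum, and the old maximum moves to the middle slot) combines with the reversal of cyclic order. Because of this I may have to adjust the exact form of the inductive orientation statement, possibly making it depend on whether the last edge was a left- or right-edge, and not only on $\epsilon$. I would first try the version depending only on $\epsilon$, test it against Figure~\ref{fig:markov_tree3}, and refine it if it fails.
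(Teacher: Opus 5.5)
Your plan is essentially the paper's proof. The paper also checks the first two mutations by hand and then runs a local induction along the tree. It uses two facts: a mutation reverses the cyclic order of the three pending arcs, and the clockwise successor of the mutated arc carries $q^{1/2}$. The paper keeps the orientation implicit. It fixes $X,Y,Z$ clockwise and deduces from $x(1)\le y(1)\le z'(1)$ that $(x,y,z)\to(x,y,z')$ is blue. It then reads off $y'$ blue and $x'$ red in the reversed configuration $X,Z',Y$. Your Step~2 turns this into an explicit invariant, which is a reasonable packaging.

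\textbf{The invariant as you wrote it is reversed.} With your own Step~1 criterion the induction fails immediately. If $\epsilon=+$ and the clockwise order at $t$ were $(a_1,a_2,a_3)$, the clockwise successor of $a_2$ would be the maximal entry $a_3$. The left-edge mutation would then be $\mu_-$, contradicting rule I.

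\textbf{The correct form is forced by Step~1.} The edge into $t$ created $a_3$ from $a_1$ (non-maximal) and $a_2$ (maximal). So it is positive exactly when $a_1$ was the clockwise successor of the mutated arc, and the mutation then reverses the order. The invariant is therefore:
\begin{itemize}
\item clockwise order $(a_1,a_3,a_2)$ if $\epsilon=+$;
\item clockwise order $(a_1,a_2,a_3)$ if $\epsilon=-$.
\end{itemize}

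\textbf{This version closes the induction using only $\epsilon$,} so the left/right refinement you anticipated is unnecessary. Take $\epsilon=+$, so the cyclic order is $a_1\to a_3\to a_2$.
\begin{itemize}
\item The successor of $a_2$ is $a_1$, so the left-edge is $\mu_+$. The child $(a_1,a_3,a_0)$ then has clockwise order $(a_1,a_0,a_3)$, as required for $+$.
\item The successor of $a_1$ is $a_3$, so the right-edge is $\mu_-$. The child $(a_2,a_3,a_0)$ has clockwise order $(a_2,a_3,a_0)$, as required for $-$.
\end{itemize}
The case $\epsilon=-$ is the mirror image. This is exactly the paper's computation in the configuration $X,Z',Y$.

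\textbf{Base case.} At $(1,1,1+q)$ the two entries equal to $1$ must be labelled to fit the invariant. Mutating $Y$ first and then $X$, as the paper does, makes $Z$ play $a_1$ and $X$ play $a_2$. This is where the second-step choice enters.

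\textbf{Normalization.} Your $x_g=q^{-\deg(x)/2}x$ is not literally preserved by the geometric rule; the second step produces $q^{-3/2}m_5$, not $q^{-1}m_5$. So phrase the inductive claim as: the plain-arc value is $q^{\alpha}m$ for some $\alpha\in\tfrac12\ZZ$, and the notched-arc value is its image under $q\mapsto q^{-1}$. The second part is preserved because the rules for $x'$ and $\tilde x'$ are exchanged by $q\leftrightarrow q^{-1}$ when $Y,Z$ are symmetric. That is all your Step~1 comparison needs.
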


\begin{proof}
    Since the mutation rule of Definition \ref{def:mutationrule} is recursive, we have to check that the mutation rule of \eqref{mutation_rule_orbifold} follows the same recursion. 

    Let us start with the first two steps. For the initial step, let us say we mutate at $Y$, this first mutation gives a symmetric result in $y$ and $\tilde{y}$ (for clarity, we fix the positions of $x,y,z$ in the triple): 
    \[
    (1,1,1) \longrightarrow (1,q^{1/2} + q^{-1/2},1) = (m_1,q^{-1/2}m_2,m_1),
    \]
so we indeed get the same triple as above, up to some powers of $q^{1/2}$. For the second step, there is a choice, we can either mutate at $X$ or $Z$. Since one mutation has occurred, the loops $X,Y,Z$ are now in counter-clockwise cyclic order. Assume we mutate at $X$ next, then the mutation rule \eqref{mutation_rule_orbifold} give the following triple:
\[
(1,q^{1/2} + q^{-1/2},1) \longrightarrow (q^{-3/2} + 2q^{-1/2} +2q^{1/2},q^{1/2} + q^{-1/2},1) = (q^{-3/2}m_5,q^{-1/2}m_2,m_1).
\]
We again get our mirror deformed Markov numbers, up to some powers of $q^{1/2}$. Note that if we had chosen to mutate at $Z$, we would have gotten $\tilde{m}_5$ instead, but a similar pattern (see Remark~\ref{rem:initial_choice}).

Let us now consider two consecutive mutations as in Figure \ref{fig:mutationrules} (so $z'(1) \geq x(1), y(1)$).
    \[
    \begin{tikzcd}
        & & (x',y,z') \\
        (x,y,z) \ar[r] & (x,y,z') \ar[ru] \ar[rd]\\
        & & (x,y',z')
    \end{tikzcd}
\]
Assume that both mutation rules coincide for the first step $(x,y,z) \to (x,y,z')$, up to a power of $q^{1/2}$. 
We start with a triple $(x,y,z)$, for which we assume without loss of generality that the corresponding loops are in the configuration of the left hand side of Figure \ref{fig:mut_sphere} ($X,Y,Z$ in clockwise order), and that $(x,y,z')$ are ordered, in the sense that $x(1)\leq y(1)\leq z'(1)$. Then according to the mutation rule \eqref{mutation_rule_orbifold}, 
\[
z'=\frac{q^{1/2}X + q^{-1/2}Y}{\tilde{z}} = q^{-1/2}\frac{qX + Y}{\tilde{z}}.
\]
Hence, from Definition \ref{def:mutationrule}, with our assumptions, the mutation from $(x,y,z)$ to $(x,y,z')$ is a positive mutation (\emph{blue}). Then the coloured mutation graph is as in Figure~\ref{fig:compare_mutation_rules}.
\begin{figure}
    \centering
    \begin{tikzpicture}
        \draw[dotted] (-1.3,-4) -- (1.3,-4);
        \draw[dotted] (2.7,-4) -- (5.8,-4);
        \draw[dotted] (0.7,-2) -- (7.8,-2);
        \draw[dotted] (0.7,0) -- (5.8,0);
            \node (a) at (0,0) {$(x,y,z)$}; 
            \node (b) at (0,-2) {$(x,y,z')$}; 
            \node (c) at (-2,-4) {$(x,y',z')$}; 
            \node (d) at (2,-4) {$(x',y,z')$}; 
            \draw[very thick, blue] (a) -- (b);
            \draw[very thick, blue] (b) -- (c);
            \draw[very thick, red] (b) -- (d);
    
            \node at (7,0) {\scalebox{0.6}{\input{3_orbifold.tikz}}};
            \node at (9,-2) {\scalebox{0.6}{\input{3_orbifold_XZY.tikz}}};
            \node at (7,-4) {\scalebox{0.6}{\input{3_orbifold.tikz}}};
            \end{tikzpicture}
    \caption{Two consecutive mutations}
    \label{fig:compare_mutation_rules}
\end{figure}
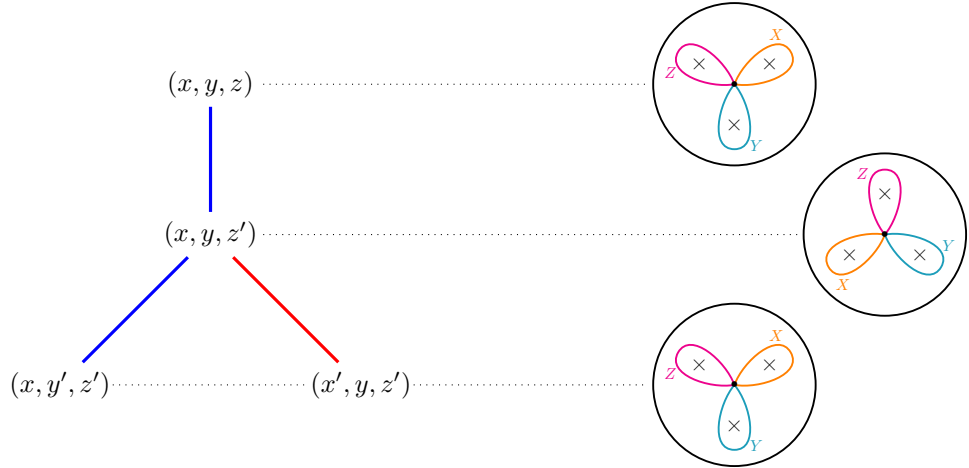

On the right are represented the configurations of the loops corresponding to $X,Y,Z$. From Figure \ref{fig:mut_sphere}, each mutation changes the configuration. We compute the new variables for the next steps using the mutation rule \eqref{mutation_rule_orbifold}:
\[
y'=\frac{q^{1/2}X + q^{-1/2}Z'}{\tilde{y}}, \quad x' = \frac{q^{1/2}Z' + q^{-1/2}Y}{\tilde{x}}.
\]
Now compare with the values obtained using the mutation rule from Definition \ref{def:positive_negative_mut}:
\[
 y'=q^{\deg z'}\frac{qX + Z'}{\tilde{y}} = q^{\deg z'+1/2}\frac{q^{1/2}X + q^{-1/2}Z'}{\tilde{y}}, \qquad x'=q^{\deg z'}\frac{q^{-1}Y + Z'}{\tilde{x}}= q^{\deg z'-1/2}\frac{q^{1/2}Z' + q^{-1/2}Y}{\tilde{x}}.
\]
We verify that the results obtained by the two different mutation rules are equal up to some powers of $q^{1/2}$, which concludes the proof.
\end{proof}

\section{Fibonacci and Pell Branches}~\label{sec:FP}

In this section we focus on Fibonacci and Pell branches of the mirror Markov tree.

\subsection{Fibonacci branch}

Fibonacci numbers are defined by $\ff_0=0$, $\ff_1=1$, $\ff_i$ for $i\geq 1$ with $\ff_{i+1}=\ff_i+\ff_{i-1}$. The odd Fibonacci numbers satisfy

\[\ff_{2i+1}=3\ff_{2i-1}-\ff_{2i-3}.\]

The left branch of the classical Markov tree contains the odd Fibonacci numbers, in triples of the form $(1,\ff_{2i-1},\ff_{2i+1})$. Let us consider the corresponding Fibonacci branch of the mirror Markov Tree, and recall Notation~\ref{not}. From now on, we denote by $f_n := m_{\ff_{2n+1}}(q)$ the $q$-deformed $n$-th odd Fibonacci number, and $F_{n} = f_n(q)f_n(q^{-1}) = q^{-n}f_n\tilde{f_n}$ the deformed squared of $\ff_{2n+1}$. The $q$-deformed odd Fibonacci numbers satisfy a similar relation.

\begin{remark}
Because of the mutation rule (see Definition \ref{def:mutationrule}), the sign of mutations on the Fibonacci branch is always positive. The mutation rule on the Fibonacci branch is
$$
f_n = q^{\deg(f_{n-1})}\frac{q+F_{n-1}}{\widetilde{f}_{n-2}}.
$$
\end{remark}

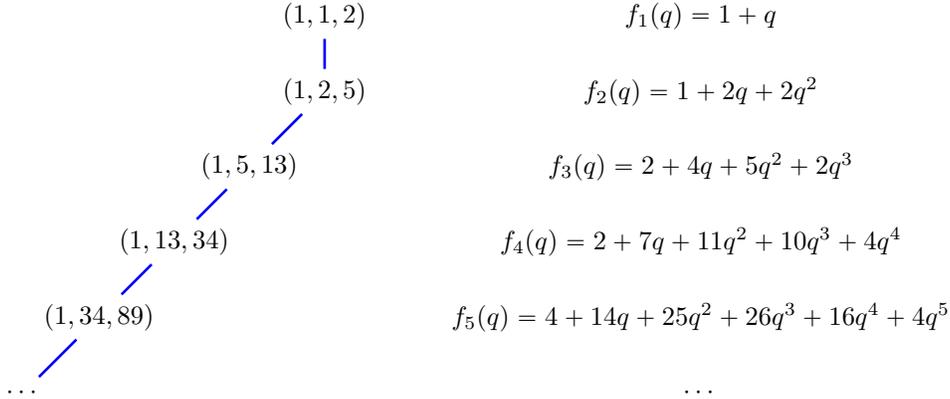
\begin{figure}[H]
    \centering
\begin{tikzpicture}
    \begin{scope}[scale=1]
    \node (A) at(0,0) {$(1,1,2)$};
    \node (B) at(0,-1) {$(1,2,5)$};
    \node (C) at(-1,-2) {$(1,5,13)$};
    \node (D) at(-2,-3) {$(1,13,34)$};
    \node (E) at(-3,-4) {$(1,34,89)$};
    \node (F) at(-4,-5) {$\cdots$};

    \draw[line width=1pt,blue] (A)-- (B);
    \draw[line width=1pt,blue] (B)-- (C);
    \draw[line width=1pt,blue] (C)-- (D);
    \draw[line width=1pt,blue] (D)-- (E);
    \draw[line width=1pt,blue] (E)-- (F);
    \end{scope}

    \begin{scope}[shift = {(5,0)},scale=1]
    \node (A) at(0,0) {$f_1(q) = 1+q$};
    \node (B) at(0,-1) {$f_2(q) = 1+2q+2q^2$};
    \node (C) at(0,-2) {$f_3(q) = 2+4q+5q^2 + 2q^3$};
    \node (D) at(0,-3) {$f_4(q) = 2 + 7q + 11q^2 + 10q^3 + 4q^4 $};
    \node (E) at(0,-4) {$f_5(q) = 4 + 14q + 25q^2 + 26q^3 + 16q^4 + 4q^5$};
    \node (F) at(0,-5) {$\cdots$};
    \end{scope}
\end{tikzpicture}
\caption{The Fibonacci branch and the $q$-deformed odd Fibonacci numbers}
\end{figure}

The classical Markov function, which is constant, is

\[\mathcal{M}(\ff_{2i+1},\ff_{2i-1},1)=\frac{\ff^2_{2i+1}+\ff_{2i-1}^2+1}{\ff_{2i+1}\ff_{2i-1}} = 3.\]

\begin{lemma}
    For $n\geq 0$, we have
    \[
    \deg(f_n)=n.
    \]
\end{lemma}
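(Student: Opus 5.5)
The plan is to argue by strong induction on $n$, using the Fibonacci-branch mutation rule
\[
f_n = q^{\deg(f_{n-1})}\frac{q+F_{n-1}}{\widetilde{f}_{n-2}},
\]
together with the relation $F_{n-1}=f_{n-1}(q)f_{n-1}(q^{-1})=q^{-\deg f_{n-1}}f_{n-1}\widetilde{f}_{n-1}$.

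\textbf{Base cases.} We have $f_0=1$ (from the triple $(1,1,1)$, where $\ff_1=1$), $f_1=1+q$ and $f_2=1+2q+2q^2$, read off from Figure~\ref{fig:initialization}. Their degrees are $0,1,2$.

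\textbf{Inductive step.} Assume $\deg f_{n-1}=n-1$ and $\deg f_{n-2}=n-2$. Then
\[
q^{\,n-1}(q+F_{n-1}) = q^{n} + f_{n-1}\widetilde{f}_{n-1}.
\]
This is an honest polynomial, since $f_{n-1}\widetilde f_{n-1}$ has degree $2(n-1)$. Its leading coefficient is the product of the leading and constant coefficients of $f_{n-1}$. When $n\ge 3$ we have $2n-2>n$, so the term $q^n$ does not affect the top degree. Hence the numerator has degree exactly $2n-2$, provided the constant coefficient of $f_{n-1}$ is nonzero.

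We next need $\deg \widetilde f_{n-2}=n-2$. This holds exactly when the constant term of $f_{n-2}$ is nonzero, because mirroring preserves degree precisely when the constant term does not vanish. Since Theorem~\ref{thm:mutation} guarantees that the quotient is a polynomial, we get
\[
\deg f_n = (2n-2)-(n-2) = n.
\]

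\textbf{Main obstacle: nonvanishing constant terms.} The whole argument rests on knowing that every $f_k$ has nonzero constant term and nonzero leading coefficient. I would add this to the induction hypothesis, which then carries the constant and leading coefficients of $f_k$ along with its degree.

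The constant term of the numerator $q^n+f_{n-1}\widetilde f_{n-1}$, for $n\ge1$, equals $c_{n-1}\ell_{n-1}$, where $c_{n-1}$ and $\ell_{n-1}$ are the constant and leading coefficients of $f_{n-1}$. The constant term of $\widetilde f_{n-2}$ is $\ell_{n-2}$. So the constant term of $f_n$ is $c_n=c_{n-1}\ell_{n-1}/\ell_{n-2}$.

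For the leading coefficients, the numerator's leading coefficient is $\ell_{n-1}c_{n-1}$ and the leading coefficient of $\widetilde f_{n-2}$ is $c_{n-2}$. This gives $\ell_n=\ell_{n-1}c_{n-1}/c_{n-2}$.

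Both quantities are nonzero as long as the previous ones are, so the induction closes. For instance, $f_3$ has $c_3=1\cdot2/1=2$ and $\ell_3=2\cdot1/1=2$, matching $2+4q+5q^2+2q^3$.

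Positivity of coefficients, coming from Theorem~\ref{thm:positivity} via $F_n=f_n(q)f_n(q^{-1})$, could serve as an alternative justification that no cancellation occurs. However, the recursion on $(c_n,\ell_n)$ is self-contained, so I would rely on it.
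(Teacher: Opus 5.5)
Your proof is correct and is essentially the paper's argument: both run an induction through the Fibonacci-branch rule $f_n = q^{\deg f_{n-1}}(q+F_{n-1})/\widetilde{f}_{n-2}$ to get $\deg f_n = \deg f_{n-1} + (n-1) - \deg f_{n-2}$. The paper first obtains $\deg F_n = n$ from the squared recursion, whereas you read the numerator's degree off $q^{n-1}F_{n-1} = f_{n-1}\widetilde{f}_{n-1}$; your explicit tracking of the constant terms (via $c_n\ell_{n-2}=c_{n-1}\ell_{n-1}$) also supplies what the paper leaves implicit, namely that $\deg \widetilde{f}_{n-2} = \deg f_{n-2}$ and that no cancellation occurs in the top degree.
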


\begin{proof}
As $F_0=1$ and $F_1=q^{-1} + 2 +q$, we have $\deg(F_0)=0$ and $\deg(F_1)=1$. Then, the mutation rule for the $F_n$ is as follows, for $n\geq 1$,
\[
F_{n+1}=\frac{1 + (q+q^{-1})F_n +F_n^2}{F_{n-1}}.
\]
From here it is clear that $\deg(F_n)=n$, for all $n\geq 0$.

Whereas, for the $(f_n)$, we also have $\deg(f_0) =0$ and $\deg(f_1) =1$. Hence, with the mutation rule, for $n\geq 2$,
\[
\deg(f_n) = \deg(f_{n-1}) + n-1 - \deg(f_{n-2}),
\]
from which we deduce the result recursively.
\end{proof}

\begin{theorem}\label{proposition: q_markov function for Fibonacci}
The $q$-Markov function
\[
\mathcal{M}^f_q(1,f_n,f_{n-1})=\frac{q^{n}F_0+q^{n+1}F_{n}+q^{n-1}F_{n-1}}{f_{n}f_{n-1}}
\]
is constant equal to $[3]_q = 1+q+q^2$, for $n\geq 1$.
\end{theorem}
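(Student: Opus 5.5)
The plan is to first rewrite the statement as a polynomial identity without the $F$'s, and then prove it by induction on $n$ along the Fibonacci branch, using the mutation rule from the Remark above together with its mirror image.

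\textbf{Reduction.} Since $F_k=q^{-k}f_k\tilde f_k$ and $F_0=1$, we have $q^{n+1}F_n=q\,f_n\tilde f_n$ and $q^{n-1}F_{n-1}=f_{n-1}\tilde f_{n-1}$. The theorem is therefore equivalent to
\[
(\ast)_n:\qquad q^n+q\,f_n\tilde f_n+f_{n-1}\tilde f_{n-1}=(1+q+q^2)\,f_nf_{n-1}.
\]
By the preceding Lemma, $\deg f_k=k$. Both sides of $(\ast)_n$ then have degree $2n+1$, so taking mirror images (multiplying by $q^{2n+1}$ and substituting $q\mapsto q^{-1}$) gives the equivalent identity
\[
(\tilde\ast)_n:\qquad q^{n+1}+f_n\tilde f_n+q\,f_{n-1}\tilde f_{n-1}=(1+q+q^2)\,\tilde f_n\tilde f_{n-1}.
\]

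\textbf{Base case and mutation relations.} For $n=1$ we have $f_0=1$ and $f_1=1+q$, and $(\ast)_1$ reads $q+q(1+q)^2+1=(1+q+q^2)(1+q)$, which is immediate.

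For the inductive step we use the Fibonacci-branch mutation. Writing $P_n:=f_n\tilde f_n$, it reads
\[
f_{n+1}\tilde f_{n-1}=q^{n}(q+F_n)=q^{n+1}+P_n.
\]
Its mirror image (degree $2n$) is
\[
\tilde f_{n+1}f_{n-1}=q^{n-1}+P_n.
\]

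\textbf{Inductive step.} Assume $(\ast)_n$, and hence also $(\tilde\ast)_n$. We want
\[
q^{n+1}+q\,f_{n+1}\tilde f_{n+1}+P_n=(1+q+q^2)f_{n+1}f_n.
\]
Multiply this target by the nonzero polynomial $f_{n-1}\tilde f_{n-1}$, and replace $f_{n+1}\tilde f_{n-1}$ and $\tilde f_{n+1}f_{n-1}$ by the two mutation expressions. The target becomes
\[
(q^{n+1}+P_n)f_{n-1}\tilde f_{n-1}+q\,(q^{n+1}+P_n)(q^{n-1}+P_n)=(1+q+q^2)\,f_nf_{n-1}\,(q^{n+1}+P_n).
\]
On the right we use $(\ast)_n$ to rewrite $(1+q+q^2)f_nf_{n-1}=q^n+qP_n+f_{n-1}\tilde f_{n-1}$. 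Everything is then a polynomial in the two quantities $P_n$ and $Q:=f_{n-1}\tilde f_{n-1}$. Expanding, the terms containing $Q$ cancel against each other, and the remaining terms should match identically. Concretely, the left side is $q^{n+1}Q+P_nQ+q^{2n+1}+q^{n+2}P_n+q^nP_n+qP_n^2$, and the right side is $(q^{n+1}+P_n)(q^n+qP_n+Q)$, which expands to the same expression. Dividing by $f_{n-1}\tilde f_{n-1}$ gives $(\ast)_{n+1}$.

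\textbf{Main obstacle.} The only delicate point is organizing the substitution so that the unmatched products ($f_{n+1}$ against $\tilde f_{n+1}$) are eliminated; multiplying through by $f_{n-1}\tilde f_{n-1}$ is what achieves this. If the bookkeeping turns out to require it, I would also invoke $(\tilde\ast)_n$ to eliminate any residual $\tilde f_n\tilde f_{n-1}$ terms. A useful sanity check throughout is that setting $q=1$ recovers the classical identity $1+\mathcal F_{2n+1}^2+\mathcal F_{2n-1}^2=3\mathcal F_{2n+1}\mathcal F_{2n-1}$.
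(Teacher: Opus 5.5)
Your argument is correct and is essentially the paper's induction. Multiplying by $f_{n-1}\tilde f_{n-1}$ and inserting the mutation relation and its mirror does the same job as the paper's use of $F_nF_{n-2}=(q+F_{n-1})(q^{-1}+F_{n-1})$ followed by the mutation rule for $f_n$: in both, the level-$(n+1)$ numerator times $f_{n-1}\tilde f_{n-1}$ becomes $(q^{n+1}+f_n\tilde f_n)$ times the level-$n$ numerator.

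There is one slip, harmless because you never use it. The mirror identity should read $q^{n+1}+f_n\tilde f_n+q^{3}f_{n-1}\tilde f_{n-1}=[3]_q\tilde f_n\tilde f_{n-1}$, since $f_{n-1}\tilde f_{n-1}$ has degree $2n-2$. Your version with $q\,f_{n-1}\tilde f_{n-1}$ already fails at $n=1$.
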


\begin{proof}
    By induction on $n \geq 1$. \\
\noindent For $n= 1$, we check that
\begin{align*}
qF_0 + q^2F_1 + F_0 &= q + q^2(1+q)(1+q^{-1}) + 1\\
&= [3]_q(1+q) = [3]_qf_1f_0.\\
\end{align*}
\noindent Now take $n \geq 2$ such that $\mathcal{M}^f_q(1,f_{n-1},f_{n-2}) = [3]_q$.

Then, from the mutation rule:
\[
f_n= q^{n-1}\frac{qF_0 + F_{n-1}}{\tilde{f}_{n-2}}.
\]

Using this mutation formula and the induction hypothesis, we get
\begin{align*}
    q^{n}F_0 + q^{n+1}F_{n} + q^{n-1}F_{n-1} &= q^n + q^{n+1}\frac{1 + (q+q^{-1})F_{n-1} + F_{n-1}^2}{F_{n-2}} + q^{n-1}F_{n-1}\\
    &=\frac{q^nF_{n-2} +q^{n+2}F_{n-1} +q^{n+1} }{F_{n-2}} + \frac{F_{n-1} }{F_{n-2}} \left( q^n + q^{n+1}F_{n-1} + q^{n-1}F_{n-2}\right)\\
    &=q^2\frac{[3]_qf_{n-1}f_{n-2}}{F_{n-2}} + q\frac{F_{n-1} }{F_{n-2}}\left( [3]_qf_{n-1}f_{n-2}\right)\\
    &=[3]_qf_{n-1}\frac{q^2 +qF_{n-1}}{q^{-n+2}\tilde{f}_{n-2}} = [3]_qf_{n-1}f_{n}.
\end{align*}
\end{proof}

\begin{remark}
    Similarly, for all $n\geq 1$, we have
    \[
    [3]_q\tilde{f}_{n}\tilde{f}_{n-1} = q^{n+1}F_0 +q^nF_n + q^{n+2}F_{n-1}.
    \]
\end{remark}

\begin{proposition} 
The $q$-deformed odd Fibonacci numbers satisfy, for $n\geq 1$,
\begin{align*}
    f_{n+1}& =[3]_q\tilde{f}_{n} -q^3f_{n-1},\\
    q\tilde{f}_{n+1} &= [3]_qf_n - \tilde{f}_{n-1}.
\end{align*}
\end{proposition}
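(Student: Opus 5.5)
Both identities follow by combining the $q$-Markov function identity of Theorem~\ref{proposition: q_markov function for Fibonacci} with the mutation rule on the Fibonacci branch. The key is to rewrite everything in terms of polynomials rather than Laurent polynomials, using $F_k = q^{-k} f_k \tilde{f}_k$ and $\deg f_k = k$.

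\textbf{First identity.} From the mutation rule we have
\[
f_{n+1} \tilde f_{n-1} = q^{n}\bigl(q + F_n\bigr) = q^{n+1} + f_n\tilde f_n .
\]
From Theorem~\ref{proposition: q_markov function for Fibonacci} (with $F_0=1$) we have
\[
[3]_q f_n f_{n-1} = q^n + q f_n \tilde f_n + q^{-1} f_{n-1}\tilde f_{n-1}.
\]
The target $f_{n+1} = [3]_q\tilde f_n - q^3 f_{n-1}$ is equivalent, after multiplying by $\tilde f_{n-1}$, to
\[
q^{n+1} + f_n\tilde f_n = [3]_q \tilde f_n \tilde f_{n-1} - q^3 f_{n-1}\tilde f_{n-1}.
\]
The product $[3]_q\tilde f_n\tilde f_{n-1}$ is supplied by the Remark following the theorem:
\[
[3]_q\tilde f_n\tilde f_{n-1} = q^{n+1} + f_n\tilde f_n + q^{2}f_{n-1}\tilde f_{n-1}.
\]
Here I have used $q^nF_n = f_n\tilde f_n$ and $q^{n+2}F_{n-1} = q^3 f_{n-1}\tilde f_{n-1}$.

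Substituting, the right-hand side becomes $q^{n+1} + f_n\tilde f_n + (q^2 - q^3) f_{n-1}\tilde f_{n-1}$. This does not cancel exactly, which signals a bookkeeping error in my powers of $q$. The first step is therefore to recheck the Remark's normalization, namely $\tilde f_{n-1}$ versus $q^{n-1}f_{n-1}(q^{-1})$. I expect that once the powers of $q$ are tracked correctly, the extra term is exactly $q^3 f_{n-1}\tilde f_{n-1}$. Then dividing by $\tilde f_{n-1}$, which is a nonzero polynomial and hence cancellable in the domain $\ZZ[q]$, gives the first identity.

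\textbf{Second identity.} This should be the mirror of the first. Apply $q\mapsto q^{-1}$ to the first identity and multiply by $q^{n+1}$, using $\tilde f_k(q) = q^k f_k(q^{-1})$ and $[3]_{q^{-1}} = q^{-2}[3]_q$. Tracking degrees, $q^{n+1}f_{n+1}(q^{-1}) = \tilde f_{n+1}$, $q^{n+1}[3]_{q^{-1}}\tilde f_n(q^{-1}) = q^{-1}[3]_q f_n$, and $q^{n+1}q^{-3}f_{n-1}(q^{-1}) = q^{-1}\tilde f_{n-1}$. Multiplying through by $q$ then yields $q\tilde f_{n+1} = [3]_q f_n - \tilde f_{n-1}$, as required.

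\textbf{Alternative route.} One could instead prove both identities simultaneously by induction on $n$. The base case $n=1$ is checked directly with $f_0=1$, $f_1=1+q$, $f_2=1+2q+2q^2$; for instance $[3]_q(1+q) - q^3 = 1+2q+2q^2$. The inductive step would feed the second identity at level $n-1$ into the first at level $n$. I would only use this if the direct division argument runs into normalization trouble.

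\textbf{Main obstacle.} The only delicate point is consistent bookkeeping of the powers of $q$ relating $F_k$, $f_k$ and $\tilde f_k$ in the theorem, in the remark, and in the mutation rule.
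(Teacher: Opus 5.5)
Your strategy is the same as the paper's. You combine the Fibonacci mutation rule $f_{n+1}\tilde f_{n-1}=q^{n}(q+F_n)$ with the mirrored $q$-Markov identity from the Remark and cancel $\tilde f_{n-1}$. You then get the second identity by applying $q\mapsto q^{-1}$ and multiplying by $q^{n+1}$, and your degree bookkeeping in that step is correct. As written, however, the first identity is not established: you stop at an expression that does not cancel and only \emph{expect} it to work out.

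The discrepancy is a one-symbol slip, not a normalization problem with the Remark. Since $F_{n-1}=q^{-(n-1)}f_{n-1}\tilde f_{n-1}$, you get $q^{n+2}F_{n-1}=q^{3}f_{n-1}\tilde f_{n-1}$, exactly as you say in words, but your display has $q^{2}f_{n-1}\tilde f_{n-1}$. With the correct exponent the Remark reads
\[
[3]_q\tilde f_n\tilde f_{n-1} = q^{n+1} + f_n\tilde f_n + q^{3}f_{n-1}\tilde f_{n-1},
\]
so $[3]_q\tilde f_n\tilde f_{n-1}-q^3f_{n-1}\tilde f_{n-1}=q^{n+1}+f_n\tilde f_n=f_{n+1}\tilde f_{n-1}$ exactly. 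Cancelling the nonzero polynomial $\tilde f_{n-1}$ finishes the proof. The Remark's $\tilde f_{n-1}$ is indeed $q^{n-1}f_{n-1}(q^{-1})$. To make the argument self-contained (the paper only says ``similarly''), derive the Remark from the Theorem: apply $q\mapsto q^{-1}$, use $f_k(q^{-1})=q^{-k}\tilde f_k$, the symmetry of $F_k$, and $[3]_{q^{-1}}=q^{-2}[3]_q$, then multiply by $q^{2n+1}$.

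Your polynomial rewriting of the Theorem also has a wrong exponent: $q^{n-1}F_{n-1}=f_{n-1}\tilde f_{n-1}$, not $q^{-1}f_{n-1}\tilde f_{n-1}$. That line is never used, so it does no harm, but it should be corrected. The inductive alternative route is unnecessary.
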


\begin{proof}
From the mutation rule, and the previous remark,
\begin{align*}
        f_{n+1} &= q^{n} \frac{qF_0 +F_{n}}{\tilde{f}_{n-1}} = \frac{q^{n+1}F_0}{\tilde{f}_{n-1}} + \frac{[3]_q\tilde{f}_{n}\tilde{f}_{n-1} - q^{n+1}F_0 - q^{n+2}F_{n-1}}{\tilde{f}_{n-1}} \\
        &= [3]_q\tilde{f}_{n} -q^{n+2}\frac{q^{n-1}\tilde{f}_{n-1}f_{n-1}}{\tilde{f}_{n-1}} = [3]_q\tilde{f}_{n} -q^3f_{n-1}.
\end{align*}
The second assertion is deduced from the first by taking $\sim$, for all $n\geq 1$:
\begin{align*}
    \tilde{f}_{n+1}(q) &= q^{n+1}f_{n+1}(q^{-1})  = q^{n+1}([3]_{q^{-1}}\tilde{f}_n(q^{-1}) -q^{-3}f_{n-1}(q^{-1})) \\
    &= q^{n+1}(q^{-n-2}[3]_qf_{n}(q) -q^{-n-2}\tilde{f}_{n-1}(q))\\
    &= q^{-1}([3]_qf_n - \tilde{f}_{n-1}).
\end{align*}
\end{proof}

\begin{proposition}\label{prop:positivity Fibonacci}
The coefficients in the $q$-deformed odd Fibonacci numbers are positive.
\end{proposition}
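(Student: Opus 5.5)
The recurrence $f_{n+1}=[3]_q\tilde{f}_{n}-q^3f_{n-1}$ from the previous proposition contains a subtraction, so it does not give positivity directly. The plan is to regroup its right-hand side so that every term is visibly nonnegative, at the cost of strengthening the induction hypothesis by two auxiliary inequalities. Throughout, $P\succeq 0$ means that the polynomial $P$ has nonnegative coefficients, and $P\succeq Q$ means $P-Q\succeq 0$. We prove by induction on $n\geq 2$ the three statements
\[
\text{(a)}\ f_n\succeq 0,\tilde f_n\succeq 0,\qquad \text{(b)}\ f_n\succeq \tilde f_{n-1},\qquad \text{(c)}\ \tilde f_n\succeq q f_{n-1}.
\]

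The first step is to rewrite the two recurrences of the previous proposition. Writing $[3]_q=1+q+q^2$, the first recurrence (with index shifted) becomes
\[
f_{n}-\tilde f_{n-1}=q\tilde f_{n-1}+q^2\bigl(\tilde f_{n-1}-qf_{n-2}\bigr).
\]
The second one, $q\tilde f_{n}=[3]_qf_{n-1}-\tilde f_{n-2}$, rearranges to
\[
\tilde f_{n}-qf_{n-1}=f_{n-1}+q^{-1}\bigl(f_{n-1}-\tilde f_{n-2}\bigr).
\]
The left-hand side of the second identity is a polynomial. Hence, if $f_{n-1}-\tilde f_{n-2}\succeq 0$, the term $q^{-1}(f_{n-1}-\tilde f_{n-2})$ is a polynomial with nonnegative coefficients (its constant term must cancel, so it has none).

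The induction step then closes as follows. Assuming (a), (b) and (c) at index $n-1$:
\begin{itemize}
\item The first identity together with (a) and (c) at $n-1$ gives (b) at $n$.
\item The second identity together with (a) and (b) at $n-1$ gives (c) at $n$.
\item (a) at $n$ follows from $f_n=\tilde f_{n-1}+(f_n-\tilde f_{n-1})$ and (b), and $\tilde f_n$ then has nonnegative coefficients as the mirror of $f_n$.
\end{itemize}
In fact $f_n\succeq\tilde f_{n-1}$ with $\tilde f_{n-1}$ nonzero, so the coefficients are positive in the sense of the statement.

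For the base cases we take $n=1,2$. We have $f_0=1$, $f_1=1+q$ and $f_2=1+2q+2q^2$, with $\tilde f_1=1+q$ and $\tilde f_2=2+2q+q^2$. The needed differences are $f_2-\tilde f_1=q+2q^2$, $\tilde f_2-qf_1=2+q$, and $f_1-\tilde f_0=q$, $\tilde f_1-qf_0=1$, all of which have nonnegative coefficients. As a further check, $f_3-\tilde f_2=2q+4q^2+2q^3$.

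The main obstacle is not the algebra but the degree and indexing conventions. The recurrences of the previous proposition are only stated for $n\geq 1$, and the mirror $\tilde f_k$ is taken with respect to $\deg f_k=k$ (the lemma above). So I will check carefully that the shifted identities are valid for the indices used in the base cases, and that $\tilde f_{n-1}$ in (b) is the mirror of degree $n-1$, so that no spurious powers of $q$ appear.
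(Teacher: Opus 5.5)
Your proof is correct. Both rearranged identities follow from the previous proposition at index $n-1\geq 1$, and the induction closes in the order (b), (a), (c) as you describe. The base case $n=1$ alone already suffices ($f_1-\tilde f_0=q$, $\tilde f_1-qf_0=1$), so the $n=2$ check is redundant but harmless.

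The paper argues in a different form. It writes
\[
\frac{f_{n+1}}{\tilde f_n}=\Bigl[1+q,\,q^{-2},\,\frac{q\tilde f_n}{f_{n-1}}-q^2\Bigr],\qquad
\frac{q\tilde f_{n+1}}{f_n}=\Bigl[q+q^2,\,1,\,\frac{f_n}{\tilde f_{n-1}}-1\Bigr],
\]
unrolls these into the periodic expansion $[1+q,q^{-2},q,1,q,q^{-2},q,1,q,\ldots]$, and reads off positivity from the continuant. The tails there are precisely $q(\tilde f_n-qf_{n-1})/f_{n-1}$ and $(f_n-\tilde f_{n-1})/\tilde f_{n-1}$, i.e.\ your quantities (c) and (b). So both arguments rest on the same two identities; you run the induction on numerators rather than on ratios.

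Your version buys rigour. The paper leaves implicit that the numerator of its Laurent-entry continued fraction is $f_{n+1}$ itself (up to a power of $q$), rather than merely something with the same ratio. You instead handle the only negative power directly, with the vanishing-constant-term remark.

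The paper's version buys an explicit continuant formula for $f_{n+1}$, in the spirit of the Gyoda--Maruyama--Sato continued fractions used earlier in the paper. That is more informative than an inequality.

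If you want every coefficient in degrees $0,\dots,n$ to be strictly positive, note that your first identity gives $f_n\succeq(1+q)\tilde f_{n-1}$. This propagates strict positivity by induction from $f_1=1+q$.
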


\begin{proof} As in Section~\ref{sec:spec int}, we use continued fractions to write fractions of $q$-deformed Fibonacci numbers. We have
\[
\begin{array}{ccl}
    \frac{f_1}{\tilde{f}_0}=1+q = [1+q] & \text{and} & \frac{f_2}{\tilde{f}_1}=\frac{1+2q+2q^2}{1+q} = 1+q + \frac{q^2}{1+q} = [1+q,q^{-2}(1+q)], \\
     \frac{\tilde{f}_1}{f_0}=1+q = [1+q] & \text{and} & \frac{q\tilde{f}_2}{f_1}=\frac{2q+2q^2+q^3}{1+q} = q+q^2 + \frac{q}{1+q} = [q+q^2,q^{-1}(1+q)].
\end{array}
\]

In general, for $n\geq 2$,
\begin{align*}
    \frac{f_{n+1}}{\tilde{f}_n} & = \frac{[3]_q\tilde{f}_{n} -q^3f_{n-1}}{\tilde{f}_n} = 1+q + q^2\frac{\tilde{f}_{n} -qf_{n-1}}{\tilde{f}_n} = 1+q + \frac{1}{\frac{q^{-2}\tilde{f}_n}{\tilde{f}_{n} -qf_{n-1}}}\\
    & = [1+q , \frac{q^{-2}\tilde{f}_n}{\tilde{f}_{n} -qf_{n-1}}] = [1+q , q^{-2} +\frac{q^{-1}f_{n-1}}{\tilde{f}_{n} -qf_{n-1}}] = [1+q , q^{-2} ,\frac{\tilde{f}_{n} -qf_{n-1}}{q^{-1}f_{n-1}}] \\  
    &= [1+q , q^{-2} ,\frac{q\tilde{f}_{n}}{f_{n-1}}-q^2],
\end{align*}
and 
\begin{align*}
    \frac{q\tilde{f}_{n+1}}{f_n} & = \frac{[3]_qf_n - \tilde{f}_{n-1}}{f_n} = q+q^2+ \frac{f_n - \tilde{f}_{n-1}}{f_n} = q+q^2+ \frac{1}{\frac{f_n}{f_n - \tilde{f}_{n-1}}} \\
    & = [q+q^2, \frac{f_n}{f_n - \tilde{f}_{n-1}}] = [q+q^2, 1 + \frac{\tilde{f}_{n-1}}{f_n - \tilde{f}_{n-1}}]  = [q+q^2, 1 ,  \frac{f_n - \tilde{f}_{n-1}}{\tilde{f}_{n-1}}] \\
    & = [q+q^2, 1 ,  \frac{f_n }{\tilde{f}_{n-1}}-1].
\end{align*}
Hence, for all $n\geq 1$,
\[
\frac{f_{n+1}}{\tilde{f}_n} = \left\lbrace \begin{array}{cc}
    [1+q,q^{-2}, q, 1, q, q^{-2}, q, 1, q, \ldots, q^{-2}, q, q^{-1}(1+q) ], & \text{if $n$ is even}, \\
    {[}1+q,q^{-2}, q, 1, q, q^{-2}, q, 1, q, \ldots, 1, q, q^{-2}(1+q) ], & \text{if $n$ is odd}.
\end{array}\right.
\]

This implies the positivity of the coefficients of all $f_n$, for $n\geq 0$.
\end{proof}

\subsection{Pell branch}

There is also the $q$-deformed Pell branch of our tree. The usual Pell recurrence is as follows.

\[\mathcal{P}_{n+1}=2\mathcal{P}_n+\mathcal{P}_{n-1}\] with $\mathcal{P}_0=0$ and $\mathcal{P}_1=1$. The odd Pell numbers are in the left branch of the right part of the Markov tree. Similar to the previous subsection, we now consider the corresponding Pell branch of the mirror Markov Tree. For $n\geq 0$, we will denote by $p_n$ the $q$-deformed $(2n+1)$-th Pell numbers and by $P_n = p_n(q)p_n(q^{-1})$ the deformed squared. Note that, for $n\geq 0$,
\[
p_n(q)= m_{\mathcal{P}_{2n+1}}(q), 
\]

\noindent and the mutation rule on this branch is 
$$
p_n = q^{\deg(p_{n-1})}\frac{q^{-1}M_2 + P_{n-1}}{\tilde{p}_{n-2}}.
$$

\begin{figure}[H]
    \centering
\begin{tikzpicture}
    \begin{scope}[scale=1]
    \node (Z) at(0,1) {$(1,1,2)$};
    \node (A) at(0,0) {$(1,2,5)$};
    \node (B) at(1,-1) {$(2,5,29)$};
    \node (C) at(0,-2) {$(2,29,169)$};
    \node (D) at(-1,-3) {$(2,169,985)$};
    \node (E) at(-2,-4) {$(2,985,5741)$};
    \node (F) at(-3,-5) {$\cdots$};

    \draw[line width=1pt,red] (A)-- (B);
    \draw[line width=1pt,red] (B)-- (C);
    \draw[line width=1pt,red] (C)-- (D);
    \draw[line width=1pt,red] (D)-- (E);
    \draw[line width=1pt,red] (E)-- (F);
    \draw[line width=1pt,blue] (Z)-- (A);
    \end{scope}

    \begin{scope}[shift = {(6,0)},scale=1]
    \node (Z) at(0,1) {$p_0(q) = 1$};
    \node (A) at(0,0) {$p_1(q) = 1+2q+2q^2$};
    \node (B) at(0,-1) {$p_2(q) = 3 + 8q + 10q^2 + 6q^3 + 2q^4$};
    \node (C) at(0,-2) {$p_3(q) = 3 + 14q + 34q^2 + 48q^3 + 42q^4 + 22q^5 + 6q^6$};
    \node (D) at(0,-3) {$p_4(q) = 9 + 48q + 130q^2 + 218q^3 + 246q^4 + 192q^5 + 102q^6 + 34q^7 + 6q^8$};
    \node (E) at(0,-4) {$\cdots$};
    \end{scope}
\end{tikzpicture}
\caption{The Pell branch and the $q$-deformed odd Pell numbers}
\end{figure}
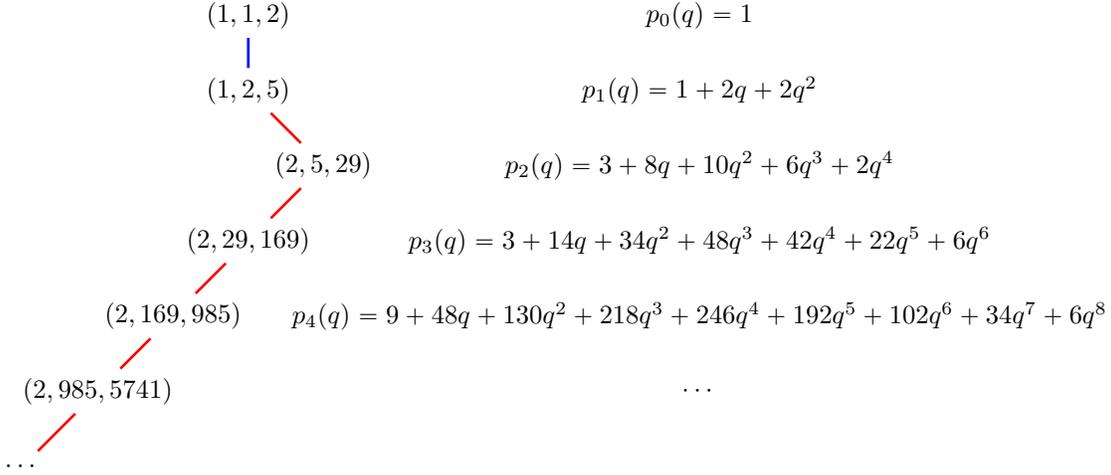

\begin{lemma}
    For $n\geq 0$, we have
    \[
    \deg(p_n)=2n.
    \]
\end{lemma}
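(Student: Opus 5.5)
We will imitate the proof of the analogous lemma for the Fibonacci branch: first determine the degrees of the symmetric squares $P_n$, and then recover $\deg(p_n)$ from the mirror mutation rule. Along the Pell branch the triples are $(2,\mathcal{P}_{2n-1},\mathcal{P}_{2n+1})$. On the deformed squared level they are $(M_2,P_{n-1},P_n)$, with $M_2=q^{-1}+2+q$ of degree $1$.

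The squared mutation rule \eqref{eq:squared mutation rule} then gives, for $n\geq 1$,
\[
P_{n+1}=\frac{M_2^2+(q+q^{-1})M_2P_n+P_n^2}{P_{n-1}}.
\]
The initial values are $P_0=1$ (degree $0$) and $P_1=M_5=2q^{-2}+6q^{-1}+9+6q+2q^2$ (degree $2$). By induction, assume $\deg P_k=2k$ for $k\le n$ with $n\geq 1$. In the numerator the term $P_n^2$ has degree $4n$. The other terms have degree at most $2+2n+1<4n$ when $n\geq 2$; for $n=1$ we have $2n+3=5>4$, so there we check directly that $P_2=M_{29}$ has degree $4$, which is read off from the table. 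Moreover, the leading coefficient of $P_n^2$ is a nonzero square, so it cannot cancel. Since all coefficients are positive (Theorem~\ref{thm:positivity}), no cancellation occurs in any case. Hence $\deg P_{n+1}=4n-2(n-1)=2n+2$.

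Next, $P_n=p_n(q)p_n(q^{-1})$ has maximal degree $\deg p_n$. This holds provided $p_n(0)\neq 0$, i.e.\ $p_n$ has nonzero constant term, so that the top degree of $p_n(q)p_n(q^{-1})$ is exactly $\deg p_n$. This gives $\deg p_n=\deg P_n=2n$.

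The main obstacle is justifying $p_n(0)\neq 0$. For this I would use the mutation rule directly, $p_n=q^{\deg p_{n-1}}(q^{-1}M_2+P_{n-1})/\tilde p_{n-2}$, and compute degrees:
\[
\deg p_n=\deg p_{n-1}+\deg P_{n-1}-\deg\tilde p_{n-2}.
\]
Here $\deg\tilde p_{n-2}=\deg p_{n-2}$ when $p_{n-2}(0)\neq0$, and $\deg P_{n-1}=2n-2$. The recursion $d_n=d_{n-1}+(2n-2)-d_{n-2}$ with $d_0=0$, $d_1=2$ yields $d_n=2n$ inductively. Alternatively, one can argue via the lowest-degree term of the numerator: the Laurent polynomial in the numerator has lowest degree $-(2n-2)$, coming from $P_{n-1}$ (for $n\geq 2$). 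After multiplying by $q^{2n-2}$, the constant term equals the leading coefficient of $P_{n-1}$, which is nonzero. Therefore $p_n(0)\neq 0$, and this closes the induction simultaneously with the degree claim.
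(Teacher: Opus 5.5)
Your proposal is correct and follows the paper's proof. You compute $\deg P_n=2n$ from the squared recursion, then obtain $\deg p_n$ from the mirror mutation rule, and you additionally prove $p_n(0)\neq 0$; the paper's recursion $\deg p_n=\deg p_{n-1}+(2n-2)-\deg p_{n-2}$ uses this silently, since it needs $\deg\tilde p_{n-2}=\deg p_{n-2}$. One harmless slip: the cross term $(q+q^{-1})M_2P_n$ has degree $2n+2$, not $2n+3$, which does not matter because you treat $n=1$ separately.
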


\begin{proof}
As $P_0=1$ and $P_1=2q^{-2} + 6q^{-1} + 9 + 6q + 2q^2
$, we have $\deg(P_0)=0$ and $\deg(P_1)=2$. Then, the mutation rule for the $P_n$ is as follows, for $n\geq 1$,
\[
P_{n+1}=\frac{M_2^2 + (q+q^{-1})M_2P_n +P_n^2}{P_{n-1}}.
\]
From here it is clear that $\deg(P_n)=2n$, for all $n\geq 0$.

Whereas, for the $(p_n)$, we also have $\deg(p_0) =0$ and $\deg(p_1) =2$. Hence, with the mutation rule, for $n\geq 2$,
\[
\deg(p_n) = \deg(p_{n-1}) + 2n-2 - \deg(p_{n-2}),
\]
from which we deduce the result recursively.
\end{proof}

\begin{theorem}\label{proposition: q_markov function for Pell}
The $q$-Markov function
\[
\mathcal{M}^p_q(2,p_n,p_{n-1})=\frac{q^{2n+1}M_2+q^{2n}P_{n}+q^{2n+2}P_{n-1}}{2p_{n}p_{n-1}}
\]
is constant equal to $[3]_q = 1+q+q^2$, for $n\geq 1$.
\end{theorem}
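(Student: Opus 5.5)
The plan is to argue by induction on $n\geq 1$, exactly as in the proof of Theorem~\ref{proposition: q_markov function for Fibonacci}. We use the mutation rule on the Pell branch
\[
p_n = q^{2n-2}\frac{q^{-1}M_2 + P_{n-1}}{\tilde{p}_{n-2}},
\]
which uses $\deg(p_{n-1})=2n-2$ from the preceding lemma. We also use the squared recursion
\[
P_n=\frac{M_2^2+(q+q^{-1})M_2P_{n-1}+P_{n-1}^2}{P_{n-2}}
\]
and the identity $P_{n-2}=q^{-(2n-4)}p_{n-2}\tilde{p}_{n-2}$, again from $\deg p_{n-2}=2n-4$. Write $N_n := q^{2n+1}M_2+q^{2n}P_n+q^{2n+2}P_{n-1}$; the claim is $N_n = 2[3]_q p_n p_{n-1}$.

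\emph{Base case $n=1$.} Here $p_0=P_0=1$, $p_1=1+2q+2q^2$ and $M_2=q^{-1}+2+q$, so
\[
N_1 = q^3M_2+q^2P_1+q^4 = 2+6q+10q^2+8q^3+4q^4 .
\]
A direct check shows this equals $(1+q+q^2)(2+4q+4q^2)=2[3]_q p_1p_0$.

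\emph{Induction step, $n\geq 2$.} Assume the statement for $n-1$, which after dividing by $q^{2n-2}$ reads
\[
qM_2+P_{n-1}+q^2P_{n-2} = 2q^{-2n+2}[3]_q\,p_{n-1}p_{n-2}.
\]
Substitute the squared recursion for $P_n$ into $N_n$ and multiply by $P_{n-2}/q^{2n}$. This gives
\[
qM_2P_{n-2}+M_2^2+(q+q^{-1})M_2P_{n-1}+P_{n-1}^2+q^2P_{n-1}P_{n-2}.
\]
The key step is to regroup this expression as
\[
P_{n-1}\bigl(qM_2+P_{n-1}+q^2P_{n-2}\bigr) + q^{-1}M_2\bigl(qM_2+P_{n-1}+q^2P_{n-2}\bigr)
= (q^{-1}M_2+P_{n-1})\bigl(qM_2+P_{n-1}+q^2P_{n-2}\bigr).
\]
To confirm the regrouping, expand: the first product gives $P_{n-1}^2+qM_2P_{n-1}+q^2P_{n-1}P_{n-2}$, and the second gives $M_2^2+q^{-1}M_2P_{n-1}+qM_2P_{n-2}$. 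Together these are exactly the five terms above.

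Now apply the induction hypothesis to the second factor and use $P_{n-2}=q^{-(2n-4)}p_{n-2}\tilde p_{n-2}$. This yields
\[
N_n = q^{2n-2}\frac{q^{-1}M_2+P_{n-1}}{\tilde p_{n-2}}\cdot 2[3]_q\,p_{n-1} = 2[3]_q\,p_np_{n-1},
\]
where the last equality is the Pell mutation rule.

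The only real obstacle is finding the factorization $(q^{-1}M_2+P_{n-1})(qM_2+P_{n-1}+q^2P_{n-2})$. Once it is found, the bookkeeping of the powers of $q$ reduces to the degree lemma. One must also check that the Pell mutation rule applies for every $n\geq 2$, including the first red step $(1,2,5)\to(2,5,29)$, which gives $p_2$ from $p_1$ and $p_0$. Since $p_2$ is listed explicitly, this case can be verified directly if needed.
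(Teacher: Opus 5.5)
Your proof is correct and is essentially the paper's argument. It uses the same base-case computation and substitutes the squared exchange relation for $P_n$. It then uses the same factorization $N_n = q^{2n}(q^{-1}M_2+P_{n-1})(qM_2+P_{n-1}+q^2P_{n-2})/P_{n-2}$; the paper writes this as $\frac{qM_2+q^2P_{n-1}}{P_{n-2}}\,N_{n-1}$. Finally it applies the induction hypothesis, $P_{n-2}=q^{-(2n-4)}p_{n-2}\tilde{p}_{n-2}$, and the Pell mutation rule.

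Your check that the first red step $(1,2,5)\to(2,5,29)$ fits the formula $p_2=q^{2}(q^{-1}M_2+P_1)/\tilde{p}_0$ is a point the paper leaves implicit, and it holds.
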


\begin{proof}
    We prove it by induction on $n$. \\
\noindent For $n=1$, we check that
\begin{align*}
    q^3M_2 +q^2P_1 + q^4P_0 &= q^3(q^{-1}+2+q) +q^2(1 + 2q + 2q^2)(1+2q^{-1} +2q^{-2}) + q^4\\
    &= 2 + 6q + 10q^2 + 8q^3 + 4q^4= 2(1+q+q^2)(1+2q+2q^2)\\ &= [3]_q(2p_1p_0)
\end{align*}
\noindent Now take $n \geq 2$ such that $\mathcal{M}^p_q(2,p_{n-1},p_{n-2}) = [3]_q$.

Then, from the result of the previous lemma and the mutation rule we get:
\[
    p_{n} = q^{2n-2} \frac{q^{-1}M_2 +P_{n-1}}{\tilde{p}_{n-2}}.
    \]
    
Using this mutation formula and the induction hypothesis, we get
\begin{align*}
    q^{2n+1}M_2+q^{2n}P_{n}+q^{2n+2}P_{n-1} =& \,q^{2n+1}M_2+ q^{2n}\frac{M_2^2 + (q+q^{-1})M_2P_{n-1} + P_{n-1}^2}{P_{n-2}} +q^{2n+2}P_{n-1} \\
    =&\,  q\frac{M_2}{P_{n-2}}\left( q^{2n}P_{n-2} + q^{2n-1}M_2 +q^{2n-2}P_{n-1}\right) \\
    & + q^2\frac{P_{n-1}}{P_{n-2}}\left( q^{2n}P_{n-2} + q^{2n-1}M_2 +q^{2n-2}P_{n-1} \right)\\
    =&\, q^2\frac{q^{-1}M_2 + P_{n-1}}{q^{-2n+4}p_{n-2}\tilde{p}_{n-2}}[3]_q2p_{n-1}p_{n-2} = [3]_q2p_{n}p_{n-1}.
\end{align*}
\end{proof}

\begin{remark}
    Similarly, for all $n\geq 1$, we have
    \[
    [3]_q(2\tilde{p}_{n}\tilde{p}_{n-1}) = q^{2n-1}M_2+q^{2n}P_{n}+q^{2n-2}P_{n-1}.
    \]
\end{remark}

\begin{proposition} 
The $q$-deformed odd Pell numbers satisfy, for $n\geq 1$,
\begin{align*}
    p_{n+1} &=2[3]_q\tilde{p}_{n} -p_{n-1}\\
    \tilde{p}_{n+1} &=2[3]_qp_n - q^4\tilde{p}_{n-1}.
\end{align*}
\end{proposition}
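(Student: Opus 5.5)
The plan is to argue exactly as for the $q$-deformed odd Fibonacci numbers: combine the Pell-branch mutation rule with the preceding Remark to eliminate $P_n$, and then obtain the second identity from the first by taking mirror images.

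\textbf{First identity.} Fix $n\geq 1$. By the lemma $\deg(p_n)=2n$, the mutation rule on the Pell branch reads
\[
p_{n+1}=q^{2n}\,\frac{q^{-1}M_2+P_n}{\tilde p_{n-1}}=\frac{q^{2n-1}M_2+q^{2n}P_n}{\tilde p_{n-1}} .
\]
The Remark following Theorem~\ref{proposition: q_markov function for Pell} gives
\[
q^{2n-1}M_2+q^{2n}P_n=2[3]_q\tilde p_n\tilde p_{n-1}-q^{2n-2}P_{n-1}.
\]
Since $\deg(p_{n-1})=2n-2$, we have $P_{n-1}=q^{-(2n-2)}p_{n-1}\tilde p_{n-1}$, so the last term equals $p_{n-1}\tilde p_{n-1}$. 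Hence the numerator factors as $\tilde p_{n-1}\bigl(2[3]_q\tilde p_n-p_{n-1}\bigr)$. Dividing by $\tilde p_{n-1}$ gives $p_{n+1}=2[3]_q\tilde p_n-p_{n-1}$.

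For $n=1$ this uses $p_0=1$, $P_0=1$ and the $n=1$ case of the Remark. That case should be checked directly, or deduced by applying $q\mapsto q^{-1}$ to Theorem~\ref{proposition: q_markov function for Pell} and using the symmetry of $M_2$ and the $P_k$.

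\textbf{Second identity.} Apply the tilde operation, using $\deg(p_{n+1})=2n+2$:
\[
\tilde p_{n+1}(q)=q^{2n+2}p_{n+1}(q^{-1})=q^{2n+2}\bigl(2[3]_{q^{-1}}\tilde p_n(q^{-1})-p_{n-1}(q^{-1})\bigr).
\]
Then substitute the three identities
\[
[3]_{q^{-1}}=q^{-2}[3]_q,\qquad \tilde p_n(q^{-1})=q^{-2n}p_n(q),\qquad p_{n-1}(q^{-1})=q^{-(2n-2)}\tilde p_{n-1}(q).
\]
This gives $\tilde p_{n+1}=2[3]_qp_n-q^4\tilde p_{n-1}$.

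The only real obstacle is bookkeeping of the powers of $q$. The degree lemma is essential, both to fix the factor $q^{\deg(p_{n})}$ in the mutation rule and to convert each $P_k$ and each mirror image correctly. Beyond that, one must make sure the Remark is available at every $n\geq1$ used, in particular the base case $n=1$.
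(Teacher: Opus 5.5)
Your proposal is correct and matches the paper's proof. Like the paper, you combine the Pell mutation rule with the Remark after the $q$-Markov function theorem, use $q^{2n-2}P_{n-1}=p_{n-1}\tilde p_{n-1}$ to factor out $\tilde p_{n-1}$, and then get the second identity by taking mirror images via the degree lemma. Your explicit power-of-$q$ computation spells out what the paper summarizes as ``the difference of degree between $p_{n+1}$ and $p_{n-1}$ is 4.''
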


\begin{proof}
    From the previous remark,
\begin{align*}
    p_{n+1} & = q^{2n}\frac{q^{-1}M_2 +P_n}{\tilde{p}_{n-1}} = \frac{q^{2n-1}M_2 }{\tilde{p}_{n-1}} + \frac{2[3]_q\tilde{p}_n\tilde{p}_{n-1}-q^{2n-1}M_2 -q^{2n-2}P_{n-1}}{\tilde{p}_{n-1}} \\
    &= 2[3]_q\tilde{p}_{n} - p_{n-1}.
\end{align*}
The second assertion is deduced from the first by taking $\sim$, knowing that the difference of degree between $p_{n+1}$ and $p_{n-1}$ is 4.
\end{proof}

\begin{proposition}
    The coefficients of the $q$-deformed odd Pell numbers are positive.
\end{proposition}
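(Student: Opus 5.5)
We follow the Fibonacci case (Proposition~\ref{prop:positivity Fibonacci}) and express ratios of consecutive $q$-deformed odd Pell numbers as continued fractions whose entries are Laurent polynomials with positive coefficients. The preceding proposition gives the two interlocked recursions
\[
p_{n+1}=2[3]_q\tilde{p}_n-p_{n-1},\qquad \tilde{p}_{n+1}=2[3]_qp_n-q^4\tilde{p}_{n-1}.
\]
We therefore study the pair of ratios $\frac{p_{n+1}}{\tilde{p}_n}$ and $\frac{\tilde{p}_{n+1}}{p_n}$. Each one is expressed through the other with the index shifted down by one.

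\textbf{Step 1: recurrences for the ratios.} Write $2[3]_q=2+2q+2q^2$ and peel off a polynomial part, as in the Fibonacci computation:
\[
\frac{p_{n+1}}{\tilde{p}_n}=c_1+\frac{d_1\tilde{p}_n-p_{n-1}}{\tilde{p}_n}=\Bigl[c_1,\ \frac{\tilde{p}_n}{d_1\tilde{p}_n-p_{n-1}}\Bigr].
\]
Here $c_1+d_1=2[3]_q$, and $c_1,d_1$ are chosen so that $d_1\tilde{p}_n-p_{n-1}$ is again a positive combination up to a monomial. For example, take $c_1=1+2q+2q^2$ and $d_1=1$, or some other split guided by the data $p_1,p_2$.

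Then invert again. This gives $\frac{\tilde{p}_n}{d_1\tilde{p}_n-p_{n-1}}=[e,\ \frac{d_1\tilde{p}_n-p_{n-1}}{p_{n-1}}\cdot(\text{monomial})]$. The last entry becomes $\frac{\tilde{p}_n}{p_{n-1}}-(\text{monomial})$, which brings us back to the other ratio at index $n-1$. The analogous computation for $\frac{\tilde{p}_{n+1}}{p_n}$, now with the extra $q^4$, feeds back into $\frac{p_n}{\tilde{p}_{n-1}}$.

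\textbf{Step 2: unroll.} Iterating the two recurrences alternately gives an explicit periodic continued fraction. It starts from the base values $\frac{p_1}{\tilde{p}_0}=1+2q+2q^2$ and $\frac{\tilde{p}_1}{p_0}=2+2q+q^2$, possibly also the level-two ratios computed directly from $p_2$. Its entries are drawn from a short finite list of monomials and positive-coefficient Laurent polynomials, and it ends in a term depending on the parity of $n$, as in the Fibonacci case.

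\textbf{Step 3: conclude positivity.} The numerator of a continued fraction whose entries are Laurent polynomials with nonnegative coefficients is itself such a Laurent polynomial. This is the same argument as in the proof of Theorem~\ref{thm:positivity}, using the usual continuant recursion, where no subtraction occurs. We then identify the numerator with $p_{n+1}$ up to a power of $q$. This uses that $p_{n+1}$ and $\tilde{p}_n$ are coprime, which follows from the recurrence, since any common factor would propagate down to $p_0=1$. Together with $\deg p_n=2n$, this yields positivity of the coefficients of $p_n$.

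The main obstacle is Step 1: finding the splitting of $2[3]_q$ for which the subtraction at the end of each inversion only removes a monomial. This is the analogue of the ``$-q^2$'' and ``$-1$'' in the Fibonacci case. If a subtraction of a monomial leaves a negative entry, the leading or trailing term of the next ratio must dominate it. We would check this inductively: the constant and top coefficients of $p_n$ and $\tilde{p}_n$ are at least those of the monomial removed, which can be tracked from the recursion for the extreme coefficients (the constant terms $1,3,3,9,\dots$ and the leading terms $2,2,6,6,\dots$). The first thing to try is to mimic the Fibonacci split exactly, $c_1=1+q$ with remainder $q^2(\cdots)$, and test it against $p_2,p_3$.
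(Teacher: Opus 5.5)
Your outline has the same architecture as the paper's proof: two interlocked ratios, peel off a polynomial part, invert twice, land on the other ratio minus a monomial, unroll, and read positivity off the continuants. But it leaves open exactly the step that carries the content. You never fix the split of $2[3]_q$ for $\tilde{p}_{n+1}/p_n$, you never check that the expansion closes up with nonnegative entries, and the split you say you would try first cannot work. For any splits $c_1+d_1=c_2+d_2=2[3]_q$, inverting twice gives
\[
\frac{p_{n+1}}{\tilde{p}_n}=\Bigl[c_1,\ d_1^{-1},\ d_1^{2}\,\frac{\tilde{p}_n}{p_{n-1}}-d_1\Bigr],\qquad
\frac{\tilde{p}_{n+1}}{p_n}=\Bigl[c_2,\ d_2^{-1},\ d_2^{2}q^{-4}\,\frac{p_n}{\tilde{p}_{n-1}}-d_2\Bigr],
\]
so $d_1,d_2$ must be monomials. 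The Fibonacci-style choice $c_1=1+q$ leaves $d_1=1+q+2q^2$ and fails immediately. The choice that works, and the one the paper uses, is your example $c_1=1+2q+2q^2$, $d_1=1$, together with $c_2=2+2q+q^2$, $d_2=q^2$:
\[
\frac{p_{n+1}}{\tilde{p}_n}=\Bigl[1+2q+2q^2,\ 1,\ \frac{\tilde{p}_n}{p_{n-1}}-1\Bigr],\qquad
\frac{\tilde{p}_{n+1}}{p_n}=\Bigl[2+2q+q^2,\ q^{-2},\ \frac{p_n}{\tilde{p}_{n-1}}-q^2\Bigr].
\]
What closes the recursion is that the \emph{first partial quotients} of $\tilde{p}_n/p_{n-1}$ and $p_n/\tilde{p}_{n-1}$ are the fixed polynomials $2+2q+q^2$ and $1+2q+2q^2$ for every $n\geq 1$, base cases included. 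Subtracting $1$, resp.\ $q^2$, turns each into $(1+q)^2$. Hence $p_{n+1}/\tilde{p}_n=[1+2q+2q^2,1,(1+q)^2,q^{-2},(1+q)^2,1,\dots,(1+q)^2]$, of length $2n+1$. In particular, the inductive check you propose on the constant and top coefficients of $p_n$ is the wrong criterion. It is equally satisfied by the monomial splits $d_1=q$ or $d_2=q$, and these fail because the new first partial quotient $q^2(2+2q+q^2)-q$, resp.\ $q^{-2}(1+2q+2q^2)-q$, has a negative coefficient.

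Your Step 3 is sound in spirit and more careful than the paper, which simply asserts that positivity follows from the expansion. Two points are needed to finish it. First, the descent passes through $g\mid q^4\tilde{p}_{n-2}$, so you need $q\nmid g$; this holds since $\tilde{p}_n(0)\neq 0$. Second, from $K(a_1,\dots,a_m)=h\,p_{n+1}$ and $K(a_2,\dots,a_m)=h\,\tilde{p}_n$ you must conclude that $h$ is a monomial, with positive coefficient by evaluating at $q=1$. For this use the continuant identity $K(a_1,\dots,a_m)K(a_2,\dots,a_{m-1})-K(a_1,\dots,a_{m-1})K(a_2,\dots,a_m)=(-1)^m$; the value of $\deg p_n$ alone does not give it. Alternatively, carry $(p_{n+1},\tilde{p}_n)$ through the same manipulations as a column vector. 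This writes it as a power of $q$ times a product of matrices $\bigl(\begin{smallmatrix} a & 1\\ 1 & 0\end{smallmatrix}\bigr)$ with $a\in\ZZ_{\geq 0}[q^{\pm1}]$, applied to the vector $((1+q)^2,1)$, which gives positivity with no gcd argument at all.
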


\begin{proof}
    The proof is similar to that of Proposition~\ref{prop:positivity Fibonacci} for the Fibonacci branch. We have
\[
\frac{p_1}{\tilde{p}_0}=1+2q+2q^2 = [p_2] \quad \text{and} \quad  \frac{\tilde{p}_1}{f_0}=2+2q+q^2  = [\tilde{p}_2] .
\]  
\begin{align*}
\frac{p_2}{\tilde{p}_1}& =\frac{3 + 8q + 10q^2 + 6q^3 + 2q^4}{2+2q+q^2} =  1+2q+2q^2 + \frac{1+2q+q^2}{2+2q+q^2}\\
& = [1+2q+2q^2, \frac{2+2q+q^2}{1+2q+q^2}] = [1+2q+2q^2, 1+\frac{1}{1+2q+q^2}] = [p_2, 1, (1+q)^2],\\
\frac{\tilde{p}_2}{p_1}& =\frac{2 + 6q + 10q^2 + 8q^3 + 3q^4}{1+2q+2q^2} =  2+2q+q^2 +\frac{q^2+2q^3+q^4}{1+2q+2q^2}\\
& = [2+2q+q^2, \frac{1+2q+2q^2}{q^2(1+2q+q^2)}] = [2+2q+q^2, q^{-2} + \frac{1}{1+2q+q^2}] = [\tilde{p}_2, q^{-2}, (1+q)^2].
\end{align*}
In general, for $n\geq 2$,
\begin{align*}
\frac{p_{n+1}}{\tilde{p}_n}& = \frac{2[3]_q\tilde{p}_{n} -p_{n-1}}{\tilde{p}_n} =  1+2q+2q^2 + \frac{\tilde{p}_{n} -p_{n-1}}{\tilde{p}_n} = [p_2, \frac{\tilde{p}_n}{\tilde{p}_{n} -p_{n-1}}] \\
& =[p_2, 1 + \frac{p_{n-1}}{\tilde{p}_{n} -p_{n-1}}]  = [p_2, 1, \frac{\tilde{p}_{n}}{p_{n-1}}-1],\\
\frac{\tilde{p}_{n+1}}{p_n}& = \frac{2[3]_qp_{n} -q^4 \tilde{p}_{n-1}}{p_n} = 2+2q+q^2 + \frac{q^2p_{n} -q^4\tilde{p}_{n-1}} {p_n} = [\tilde{p}_2, \frac{p_n}{q^2p_{n} -q^4\tilde{p}_{n-1}}] \\ 
& = [\tilde{p}_2, q^{-2} +\frac{\tilde{p}_{n-1}}{p_{n} -q^2\tilde{p}_{n-1}}]  = [\tilde{p}_2, q^{-2},\frac{p_{n} }{\tilde{p}_{n-1}}-q^2] .
\end{align*}
Hence, for all $n\geq 1$, $\frac{p_{n+1}}{\tilde{p}_n}$ can be written as a continued fraction of length $2n+1$, which is
\[
\frac{p_{n+1}}{\tilde{p}_n} = 
     [p_2, 1, (1+q)^2, q^{-2}, (1+q)^2, \ldots,(1+q)^2] .
\]
This implies the positivity of the coefficients of all $p_n$, for $n\geq 0$.
\end{proof}

\begin{remark} Having proved the positivity for both Fibonacci and Pell branches,  we accomplish strong hints to the positivity holds for all branches, see Conjecture~\ref{conj:positivity}.
\end{remark}

\section{Open questions}~\label{sec:open}

For a fixed Markov number $\ell\geq 5$, there are two branches on the classical Markov tree of the form $(\ell,b,c)$, with $\ell$ fixed; one blue branch and one red branch. We conjecture that the phenomenon observed for the $q$-Markov functions on the Fibonacci and Pell branches generalizes to these branches. 

For $m_\ell(q)$ the mirror deformation of $\ell$, let $(p_n)_{n\geq 0}$ be the sequence of polynomials appearing on one of the branches of the mirror Markov tree which fixes $m_\ell$. 
By convention, we have $p_0=m_\ell$. As before, we use the uppercase notations for the symmetrized polynomials: $M=m_\ell(q)m_\ell(q^{-1})$, $P_n=p_n(q)p_n(q^{-1})$, $n\geq 0$. 

\begin{conjecture}
There exists a (possibly different) $q$-deformation $a_\ell(q) \in \ZZ_{\geq 0}[q]$ of $\ell$ such that, on the branch $(m_\ell,p_{n-1},p_n)$ with $\ell$ fixed, if we define the following $q$-Markov function
\[
\mathcal{M}_q^{\ell}(m_\ell,p_{n-1},p_n) = \dfrac{q^{d}M + q^{d+\varepsilon}P_n + q^{d-\varepsilon}P_{n-1}}{a_\ell p_np_{n-1}}
\]
where $d=\deg(p_{n+1})+\varepsilon$, and $\varepsilon=1$ if the branch is red and $\varepsilon=-1$ if the branch is blue, then $\mathcal{M}_q^{\ell}(m_\ell,p_{n-1},p_n)$ is constant equal to $[3]_q$.
\end{conjecture}

We obtain a conjectural formula for $a_\ell(q)$ based on computer calculations. But we refrain to state it here as this computation favours some choices at the beginning.

\begin{conjecture}~\label{conj:positivity}
    Experimentally, the coefficients of the mirror deformed Markov numbers, $m_n$, have always been positive. We conjecture that these coefficients are all positive. 
\end{conjecture}

\begin{conjecture}
We conjecture a \emph{$q$-deformed uniqueness} for the mirror deformation, i.e. two mirror deformed triple can not have the same maximum-degree polynomial.
\end{conjecture}
 
\bibliographystyle{alpha}
\bibliography{references}

\end{document}

%% file: mutation_orbifold.tikz
\begin{tikzpicture}
    \node at (-6,0) {\input{3_orbifold.tikz}};
    \node at (-3,0) {$\xrightarrow{\mu_X}$};
        \node at (0,0) {\begin{tikzpicture}[scale=1.2]
    \draw[very thick, teal] (0,0) to[out=-130,in=180,] (0,-1.2) to[out=0,in=-50] (0,0);
    \draw[very thick, orange] (0,0) to[out=-160,in=180] (0,-1.3) to[out=0,in=-70] (0.45,0)  to[out=110,in=180] (50:1) to[out=0,in=90] (1.1,0) to[out=-90,in=0] (0,-1.4) to[out=180,in=-90] (220:0.9) to[out=90,in=190] (0,0);
    \draw[very thick, magenta] (0,0) to[out=110,in=60] (150:1.2) to[out=240,in=190] (0,0);
    \draw node at (0,0) {$\bullet$};
    \draw node at (0,-0.75) {$\bigtimes$};
    \draw node at (30:0.75) {$\bigtimes$};
    \draw node at (150:0.75) {$\bigtimes$};
    \draw[very thick] (0,0) circle (1.5);
    \draw[orange] node at (60:1.1) {$X$};
    \draw[teal] node at (-45:0.5) {$Y$};
     \draw[magenta] node at (170:1.2) {$Z$};
\end{tikzpicture}};
\node at (3,0) {$\simeq$} ;
\node at (6,0) {\input{3_orbifold_XZY.tikz}};
    \end{tikzpicture}

%% file: 3_orbifold.tikz
\begin{tikzpicture}[scale=1.2]
    \draw[very thick, teal] (0,0) to[out=-130,in=180,] (0,-1.2) to[out=0,in=-50] (0,0);
    \draw[very thick, orange] (0,0) to[out=-10,in=300] (30:1.2) to[out=120,in=70] (0,0);
    \draw[very thick, magenta] (0,0) to[out=110,in=60] (150:1.2) to[out=240,in=190] (0,0);
    \draw node at (0,0) {$\bullet$};
    \draw node at (0,-0.75) {$\bigtimes$};
    \draw node at (30:0.75) {$\bigtimes$};
    \draw node at (150:0.75) {$\bigtimes$};
    \draw[very thick] (0,0) circle (1.5);
    \draw[orange] node at (50:1.2) {$X$};
    \draw[teal] node at (-70:1.2) {$Y$};
     \draw[magenta] node at (170:1.2) {$Z$};
\end{tikzpicture}

%% file: 3_orbifold_XZY.tikz
\begin{tikzpicture}[scale=1.2]
    \draw[very thick, teal] (0,0) to[out=-70,in=240,] (-30:1.2) to[out=60,in=10] (0,0);
    \draw[very thick, orange] (0,0) to[out=170,in=120] (210:1.2) to[out=-60,in=250] (0,0);
    \draw[very thick, magenta] (0,0) to[out=50,in=0] (90:1.2) to[out=180,in=130] (0,0);
    \draw node at (0,0) {$\bullet$};
    \draw node at (-30:0.75) {$\bigtimes$};
    \draw node at (210:0.75) {$\bigtimes$};
    \draw node at (90:0.75) {$\bigtimes$};
    \draw[very thick] (0,0) circle (1.5);
    \draw[orange] node at (230:1.2) {$X$};
    \draw[teal] node at (-10:1.2) {$Y$};
     \draw[magenta] node at (110:1.2) {$Z$};
\end{tikzpicture}

%% file: mutation_pair_arcs.tikz
\begin{tikzpicture}
    \node at (-6,0) {\input{2_orbifolds_2_punctures.tikz}};
    \node at (-3,0) {$\xrightarrow{\mu_X}$};
        \node at (0,0) {\begin{tikzpicture}[scale=1.2]
    \draw[very thick, teal] (0,0) to[out=-130,in=180,] (0,-1.2) to[out=0,in=-50] (0,0);
    \draw[very thick, orange,postaction={decorate,decoration={markings,
      mark=at position 0.8 with {\node[sloped, allow upside down] {$\bowtie$};}}}] (0,0) to[out=-160,in=180] (0,-1.3) to[out=0,in=-70] (0.5,0)  to[out=110,in=180] (30:1);
    \draw[very thick, orange] (30:1) to[out=0,in=90] (1.1,0) to[out=-90,in=0] (0,-1.4) to[out=180,in=-90] (220:0.9) to[out=90,in=190] (0,0);
    \draw[very thick, magenta] (0,0) to[out=110,in=60] (150:1.2) to[out=240,in=190] (0,0);
    \draw node at (0,0) {$\bullet$};
    \draw node at (0,-0.75) {$\bigtimes$};
    \draw node at (30:1) {$\bullet$};
    \draw node at (150:0.75) {$\bigtimes$};
    \draw[very thick] (0,0) circle (1.5);
    \draw[orange] node at (45:0.8) {$\tilde{x}$};
    \draw[orange] node at (0:1.3) {$x$};
    \draw[teal] node at (-45:0.5) {$Y$};
     \draw[magenta] node at (170:1.2) {$Z$};
\end{tikzpicture}};
\end{tikzpicture}

%% file: 2_orbifolds_2_punctures.tikz
\begin{tikzpicture}[scale=1.2]
    \draw[very thick, teal] (0,0) to[out=-130,in=180,] (0,-1.2) to[out=0,in=-50] (0,0);
    \draw[very thick, orange,postaction={decorate,decoration={markings,
      mark=at position 0.8 with {\node[sloped, allow upside down] {$\bowtie$};}}}] (0,0) to[out=-10,in=300] (30:1); \draw[very thick, orange] (30:1) to[out=120,in=70] (0,0);
    \draw[very thick, magenta] (0,0) to[out=110,in=60] (150:1.2) to[out=240,in=190] (0,0);
    \draw node at (0,0) {$\bullet$};
    \draw node at (0,-0.75) {$\bigtimes$};
    \draw node at (30:1) {$\bullet$};
    \draw node at (150:0.75) {$\bigtimes$};
    \draw[very thick] (0,0) circle (1.5);
    \draw[orange] node at (60:0.8) {$x$};
    \draw[orange] node at (0:0.8) {$\tilde{x}$};
    \draw[teal] node at (-70:1.2) {$Y$};
     \draw[magenta] node at (170:1.2) {$Z$};
\end{tikzpicture}

%% file: references.bib
@book{Aigner2013MarkovBook,
    AUTHOR = {Aigner, Martin},
     TITLE = {Markov's theorem and 100 years of the uniqueness conjecture},
      NOTE = {A mathematical journey from irrational numbers to perfect
              matchings},
 PUBLISHER = {Springer, Cham},
      YEAR = {2013},
     PAGES = {x+257},
      ISBN = {978-3-319-00887-5; 978-3-319-00888-2},
   MRCLASS = {11-03 (11J06 20E05 20H10 68R15)},
  MRNUMBER = {3098784},
MRREVIEWER = {Thomas\ A.\ Schmidt},
       DOI = {10.1007/978-3-319-00888-2},
       URL = {https://doi.org/10.1007/978-3-319-00888-2},
}

@article{GKW24,
AUTHOR = {Greenberg, Zachary and Kaufman, Dani and Wienhard, Anna},
     TITLE = {{$\rm SL_2$}-like properties of matrices over noncommutative
              rings and generalizations of {M}arkov numbers},
   JOURNAL = {Adv. Math.},
  FJOURNAL = {Advances in Mathematics},
    VOLUME = {482},
      YEAR = {2025},
     PAGES = {Paper No. 110556, 62},
      ISSN = {0001-8708,1090-2082},
   MRCLASS = {20G42 (11D25 13F60 15B33 16W10)},
  MRNUMBER = {4968025},
       DOI = {10.1016/j.aim.2025.110556},
       URL = {https://doi.org/10.1016/j.aim.2025.110556},
}

@misc{G22,
      title={Positive integer solutions to $(x+y)^2+(y+z)^2+(z+x)^2=12xyz$}, 
      author={Gyoda, Y.},
      year={2022},
      eprint={2109.09639v4},
      archivePrefix={arXiv},
      primaryClass={math.CO},
      url={https://arxiv.org/pdf/2109.09639}, 
    note={\url{https://arxiv.org/pdf/2109.09639}},
}

@article {GM23,
    AUTHOR = {Gyoda, Yasuaki and Matsushita, Kodai},
     TITLE = {Generalization of {M}arkov {D}iophantine equation via
              generalized cluster algebra},
   JOURNAL = {Electron. J. Combin.},
  FJOURNAL = {Electronic Journal of Combinatorics},
    VOLUME = {30},
      YEAR = {2023},
    NUMBER = {4},
     PAGES = {Paper No. 4.10, 20},
      ISSN = {1077-8926},
   MRCLASS = {11D25 (13F60)},
  MRNUMBER = {4657283},
MRREVIEWER = {Arthur\ Baragar},
       DOI = {10.37236/11420},
       URL = {https://doi.org/10.37236/11420},
}

@misc{GMS,
      title={SL(2,Z)-matrixizations of generalized Markov numbers}, 
      author={Yasuaki Gyoda and Shuhei Maruyama and Yusuke Sato},
      year={2024},
      eprint={2407.08203},
      archivePrefix={arXiv},
      primaryClass={math.NT},
      url={https://arxiv.org/abs/2407.08203}, 
    note={\url{https://arxiv.org/pdf/2407.08203}},
}

@article {M79,
    AUTHOR = {Markoff, A.},
     TITLE = {Sur les formes quadratiques binaires ind\'efinies},
   JOURNAL = {Math. Ann.},
  FJOURNAL = {Mathematische Annalen},
    VOLUME = {15},
      YEAR = {1879},
    NUMBER = {3-4},
     PAGES = {381--406},
      ISSN = {0025-5831,1432-1807},
   MRCLASS = {11E16 (11A55)},
  MRNUMBER = {4788527},
}

@misc{gyoda_uniqueness_2024,
	title = {Uniqueness theorem of generalized {Markov} numbers that are prime powers},
	url = {http://arxiv.org/abs/2312.07329},
	doi = {10.48550/arXiv.2312.07329},
	urldate = {2025-05-01},
	publisher = {arXiv},
	author = {Gyoda, Yasuaki and Maruyama, Shuhei},
	month = jul,
	year = {2024},
    note={\url{https://arxiv.org/abs/2312.07329}}
}

@misc{M25,
      title={Super Markov Numbers and Signed Double Dimer Covers}, 
      author={Musiker, G.},
      year={2025},
      eprint={2503.21872},
      archivePrefix={arXiv},
      primaryClass={math.CO},
      url={https://www.arxiv.org/pdf/2503.21872}, 
    note={\url{https://arxiv.org/pdf/2503.21872}},
}

@article{Leclere2021Q-DeformationsNumbers,
    title = {{q-Deformations in the modular group and of the real quadratic irrational numbers}},
    year = {2021},
    journal = {Advances in Applied Mathematics},
    author = {Leclere, Ludivine and Morier-Genoud, Sophie},
    volume = {130},
    doi = {10.1016/j.aam.2021.102223},
    issn = {10902074}
}

@article{kantarci_oguz_oriented_2025,
	title = {Oriented posets, rank matrices and q-deformed {Markov} numbers},
	volume = {348},
	issn = {0012365X},
	url = {https://linkinghub.elsevier.com/retrieve/pii/S0012365X2400387X},
	doi = {10.1016/j.disc.2024.114256},
	language = {en},
	number = {2},
	urldate = {2025-05-01},
	journal = {Discrete Mathematics},
	author = {Kantarcı Oğuz, Ezgi},
	month = feb,
	year = {2025},
	pages = {114256},
}

@misc{Kogiso2020,
      title={$q$-Deformations and $t$-deformations of Markov triples}, 
      author={Takeyoshi Kogiso},
      year={2020},
      eprint={2008.12913},
      archivePrefix={arXiv},
      primaryClass={math.NT},
      url={https://arxiv.org/abs/2008.12913}, 
    note={\url{https://arxiv.org/pdf/2008.12913}}
}

@article {Huang2023,
    AUTHOR = {Huang, Yi and Penner, Robert C. and Zeitlin, Anton M.},
     TITLE = {Super {M}c{S}hane identity},
   JOURNAL = {J. Differential Geom.},
  FJOURNAL = {Journal of Differential Geometry},
    VOLUME = {125},
      YEAR = {2023},
    NUMBER = {3},
     PAGES = {509--551},
      ISSN = {0022-040X,1945-743X},
   MRCLASS = {57M05 (37E30)},
  MRNUMBER = {4674074},
MRREVIEWER = {Sadayoshi\ Kojima},
       DOI = {10.4310/jdg/1701804150},
       URL = {https://doi.org/10.4310/jdg/1701804150},
}

@article {Musiker2023,
    AUTHOR = {Musiker, Gregg and Ovenhouse, Nicholas and Zhang, Sylvester
              W.},
     TITLE = {Matrix formulae for decorated super {T}eichm\"uller spaces},
   JOURNAL = {J. Geom. Phys.},
  FJOURNAL = {Journal of Geometry and Physics},
    VOLUME = {189},
      YEAR = {2023},
     PAGES = {Paper No. 104828, 28},
      ISSN = {0393-0440,1879-1662},
   MRCLASS = {30F60 (13F60 58A50)},
  MRNUMBER = {4575094},
       DOI = {10.1016/j.geomphys.2023.104828},
       URL = {https://doi.org/10.1016/j.geomphys.2023.104828},
}

@article {P87,
    AUTHOR = {Penner, R. C.},
     TITLE = {The decorated {T}eichm\"uller space of punctured surfaces},
   JOURNAL = {Comm. Math. Phys.},
  FJOURNAL = {Communications in Mathematical Physics},
    VOLUME = {113},
      YEAR = {1987},
    NUMBER = {2},
     PAGES = {299--339},
      ISSN = {0010-3616,1432-0916},
   MRCLASS = {32G15 (14H15 53A35)},
  MRNUMBER = {919235},
MRREVIEWER = {C.\ Earle},
       URL = {http://projecteuclid.org/euclid.cmp/1104160216},
}

@article {FST08,
    AUTHOR = {Fomin, Sergey and Shapiro, Michael and Thurston, Dylan},
     TITLE = {Cluster algebras and triangulated surfaces. {I}. {C}luster
              complexes},
   JOURNAL = {Acta Math.},
  FJOURNAL = {Acta Mathematica},
    VOLUME = {201},
      YEAR = {2008},
    NUMBER = {1},
     PAGES = {83--146},
      ISSN = {0001-5962,1871-2509},
   MRCLASS = {57Q15 (13F60 32G15 52B70)},
  MRNUMBER = {2448067},
MRREVIEWER = {Christof\ Gei\ss},
       DOI = {10.1007/s11511-008-0030-7},
       URL = {https://doi.org/10.1007/s11511-008-0030-7},
}

@article {FT18,
    AUTHOR = {Fomin, Sergey and Thurston, Dylan},
     TITLE = {Cluster algebras and triangulated surfaces {P}art {II}:
              {L}ambda lengths},
   JOURNAL = {Mem. Amer. Math. Soc.},
  FJOURNAL = {Memoirs of the American Mathematical Society},
    VOLUME = {255},
      YEAR = {2018},
    NUMBER = {1223},
     PAGES = {v+97},
      ISSN = {0065-9266,1947-6221},
      ISBN = {978-1-4704-2967-6; 978-1-4704-4823-3},
   MRCLASS = {13F60 (30F60 57M50)},
  MRNUMBER = {3852257},
MRREVIEWER = {Christof\ Gei\ss},
       DOI = {10.1090/memo/1223},
       URL = {https://doi.org/10.1090/memo/1223},
}

@article {PZ19,
    AUTHOR = {Penner, R. C. and Zeitlin, Anton M.},
     TITLE = {Decorated super-{T}eichm\"uller space},
   JOURNAL = {J. Differential Geom.},
  FJOURNAL = {Journal of Differential Geometry},
    VOLUME = {111},
      YEAR = {2019},
    NUMBER = {3},
     PAGES = {527--566},
      ISSN = {0022-040X,1945-743X},
   MRCLASS = {32C11 (32G15 57N05 58A50)},
  MRNUMBER = {3934599},
MRREVIEWER = {Ya\c sar\ S\"ozen},
       DOI = {10.4310/jdg/1552442609},
       URL = {https://doi.org/10.4310/jdg/1552442609},
}

@article {MOZ21,
    AUTHOR = {Musiker, Gregg and Ovenhouse, Nicholas and Zhang, Sylvester
              W.},
     TITLE = {An expansion formula for decorated super-{T}eichm\"uller
              spaces},
   JOURNAL = {SIGMA Symmetry Integrability Geom. Methods Appl.},
  FJOURNAL = {SIGMA. Symmetry, Integrability and Geometry. Methods and
              Applications},
    VOLUME = {17},
      YEAR = {2021},
     PAGES = {Paper No. 080, 34},
      ISSN = {1815-0659},
   MRCLASS = {30F60 (13F60 58A50)},
  MRNUMBER = {4306793},
MRREVIEWER = {Athanase\ Papadopoulos},
       DOI = {10.3842/SIGMA.2021.080},
       URL = {https://doi.org/10.3842/SIGMA.2021.080},
}

@article {CS14,
    AUTHOR = {Chekhov, Leonid and Shapiro, Michael},
     TITLE = {Teichm\"uller spaces of {R}iemann surfaces with orbifold
              points of arbitrary order and cluster variables},
   JOURNAL = {Int. Math. Res. Not. IMRN},
  FJOURNAL = {International Mathematics Research Notices. IMRN},
      YEAR = {2014},
    NUMBER = {10},
     PAGES = {2746--2772},
      ISSN = {1073-7928,1687-0247},
   MRCLASS = {32G15 (13F60 30F60)},
  MRNUMBER = {3214284},
       DOI = {10.1093/imrn/rnt016},
       URL = {https://doi.org/10.1093/imrn/rnt016},
}

@article {BS24,
    AUTHOR = {Banaian, Esther and Sen, Archan},
     TITLE = {A generalization of {M}arkov numbers},
   JOURNAL = {Ramanujan J.},
  FJOURNAL = {Ramanujan Journal. An International Journal Devoted to the
              Areas of Mathematics Influenced by Ramanujan},
    VOLUME = {63},
      YEAR = {2024},
    NUMBER = {4},
     PAGES = {1021--1055},
      ISSN = {1382-4090,1572-9303},
   MRCLASS = {11D45 (05A19 05E16 11A55 13F60)},
  MRNUMBER = {4721155},
MRREVIEWER = {Hayder\ R.\ Hashim},
       DOI = {10.1007/s11139-023-00801-6},
       URL = {https://doi.org/10.1007/s11139-023-00801-6},
}

@article {FSM12,
    AUTHOR = {Felikson, Anna and Shapiro, Michael and Tumarkin, Pavel},
     TITLE = {Cluster algebras and triangulated orbifolds},
   JOURNAL = {Adv. Math.},
  FJOURNAL = {Advances in Mathematics},
    VOLUME = {231},
      YEAR = {2012},
    NUMBER = {5},
     PAGES = {2953--3002},
      ISSN = {0001-8708,1090-2082},
   MRCLASS = {13F60},
  MRNUMBER = {2970470},
MRREVIEWER = {Kyungyong\ Lee},
       DOI = {10.1016/j.aim.2012.07.032},
       URL = {https://doi.org/10.1016/j.aim.2012.07.032},
}

@misc{evans_arithmetic_2025,
	title = {Arithmetic and geometry of {Markov} polynomials},
	url = {http://arxiv.org/abs/2501.14882},
	doi = {10.48550/arXiv.2501.14882},
	language = {en},
	urldate = {2025-07-22},
	publisher = {arXiv},
	author = {Evans, S. J. and Veselov, A. P. and Winn, B.},
	year = {2025},
	note = {\url{http://arxiv.org/abs/2501.14882}},
}

@misc{Banaian_Gyoda_2025,
      title={Cluster algebraic interpretation of generalized Markov numbers and their matrixizations}, 
      author={Esther Banaian and Yasuaki Gyoda},
      year={2025},
      eprint={2507.06900},
      archivePrefix={arXiv},
      primaryClass={math.CO},
      url={https://arxiv.org/abs/2507.06900}, 
     note={\url{https://arxiv.org/abs/2507.06900}},

}

@misc{EJMGO25,
      title={On $q$-deformed Markov numbers. Cohn matrices and perfect matchings with weighted edges}, 
      author={Sam Evans and Perrine Jouteur and Sophie Morier-Genoud and Valentin Ovsienko},
      year={2025},
      eprint={2507.19080},
      archivePrefix={arXiv},
      primaryClass={math.CO},
      url={https://arxiv.org/abs/2507.19080}, 
    note={\url{https://arxiv.org/abs/2507.19080}},
}
